\newcommand{\msc}[1]{\begin{center}MSC2000: #1.\end{center}}
\def\COMMENT#1{}
\let\COMMENT=\footnote
\def\TASK#1{}
\def\noproof{{\unskip\nobreak\hfill\penalty50\hskip2em\hbox{}\nobreak\hfill%
        $\square$\parfillskip=0pt\finalhyphendemerits=0\par}\goodbreak}
\def\endproof{\noproof\bigskip}
\newdimen\margin   
\def\textno#1&#2\par{%
    \margin=\hsize
    \advance\margin by -4\parindent
           \setbox1=\hbox{\sl#1}%
    \ifdim\wd1 < \margin
       $$\box1\eqno#2$$%
    \else
       \bigbreak
       \hbox to \hsize{\indent$\vcenter{\advance\hsize by -3\parindent
       \sl\noindent#1}\hfil#2$}%
       \bigbreak
    \fi}
\newtheorem{thm}{Theorem}[section]
\newtheorem{define}[thm]{Definition}
\newtheorem{lem}[thm]{Lemma}
\newtheorem{claim}[thm]{Claim}
\newtheorem{fact}[thm]{Fact}
\newtheorem{prop}[thm]{Proposition}
\newtheorem*{thm*}{Theorem}
\newtheorem*{define*}{Definition}
\newtheorem*{examp*}{Example}
\newtheorem*{lem*}{Lemma}
\newtheorem*{claim*}{Claim}
\newtheorem*{fact*}{Fact}
\newtheorem*{col*}{Corollary}
\newtheorem*{conj*}{Conjecture}
\begin{document}

\title{A degree sequence Koml\'{o}s theorem}

\author{Joseph Hyde, Hong Liu, Andrew Treglown}
\thanks{H.L. was supported by the Leverhulme Trust Early Career Fellowship~ECF-2016-523.  A.T.\ was supported by EPSRC grant EP/M016641/1.}
\maketitle
\date{}
\begin{abstract}
An important result of Koml\'os~[Tiling Tur\'an theorems, Combinatorica, 2000] yields the asymptotically exact minimum degree threshold that ensures a graph \(G\) contains an \(H\)-tiling covering an \(x\)th proportion of the vertices of \(G\) (for any fixed \(x \in (0,1)\) and graph \(H\)).
We give a degree sequence strengthening of this result which allows for a large proportion of the vertices in the host graph \(G\) to have degree substantially smaller than that required by Koml\'os' theorem. We also demonstrate that for certain graphs \(H\), the degree sequence condition is essentially best possible in more than one sense.
\end{abstract}

\msc{05C35, 05C70}

\section{Introduction} \label{Introduction}
A central branch of extremal combinatorics concerns the study of conditions that force a graph \(G\) to contain some given substructure. For example, Tur\'an's famous theorem determines the number of edges required to force a graph \(G\) to contain a copy of a fixed clique \(K_r\) on \(r\) vertices. Tutte's theorem characterises all those graphs \(G\) that contain a perfect matching.

The study of \emph{graph tilings} has proven to be a rich topic within this area: given two graphs \(H\)  and \(G\), an \emph{\(H\)-tiling} in \(G\) 
is a collection of vertex-disjoint copies of \(H\) in \(G\). An
\(H\)-tiling is called \emph{perfect} if it covers all the vertices of \(G\).
Perfect \(H\)-tilings are also often referred to as \emph{\(H\)-factors}, \emph{perfect \(H\)-packings} or \emph{perfect \(H\)-matchings}. 
\(H\)-tilings can be viewed as generalisations of both the notion of a matching (which corresponds to the case when \(H\) is a single edge) and the Tur\'an problem (i.e. a copy of \(H\) in \(G\) is simply an \(H\)-tiling of size one).

A cornerstone result in the area is the Hajnal--Szemer\'edi theorem~\cite{hs} from 1970, 
which characterises the minimum degree that ensures a graph contains a perfect \(K_r\)-tiling. 

\begin{thm}[Hajnal and Szemer\'edi~\cite{hs}]\label{hs}
Every graph \(G\) whose order \(n\)
is divisible by \(r\) and whose minimum degree satisfies \(\delta (G) \geq (1-1/r)n\) contains a perfect \(K_r\)-tiling. Moreover, there are \(n\)-vertex graphs \(G\)
 with \(\delta (G) = (1-1/r)n-1\) that do not contain a perfect \(K_r\)-tiling.
\end{thm}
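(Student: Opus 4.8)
The plan is to handle the two halves of the statement separately.

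\emph{Tightness (the ``moreover'' assertion).} I would exhibit an explicit extremal graph: take an independent set $I$ on $n/r+1$ vertices, a clique $R$ on the remaining $(1-1/r)n-1$ vertices, and join every vertex of $I$ to every vertex of $R$. Then each vertex of $I$ has degree $(1-1/r)n-1$ and each vertex of $R$ has degree $n-1$, so $\delta(G)=(1-1/r)n-1$. Since $I$ is independent, any copy of $K_r$ in $G$ contains at most one vertex of $I$; as a perfect $K_r$-tiling consists of exactly $n/r$ copies of $K_r$, it could cover at most $n/r<|I|$ vertices of $I$, a contradiction. Hence no perfect $K_r$-tiling exists.

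\emph{Sufficiency of the degree bound.} Here it is cleanest to pass to the complement. Put $H:=\overline{G}$ and $k:=n/r$; then $\delta(G)\ge(1-1/r)n$ is equivalent to $\Delta(H)\le k-1$, and a perfect $K_r$-tiling in $G$ corresponds exactly to a partition of $V(H)$ into $k$ independent sets of size $r$ --- that is, to an equitable proper $k$-colouring of $H$ (since $|V(H)|=rk$, equitability forces every colour class to have size exactly $r$). So it suffices to prove the equivalent assertion that every graph $H$ with $\Delta(H)\le d$ has an equitable $(d+1)$-colouring, applied with $d=k-1$. After padding $H$ with at most $d$ isolated vertices one may assume $(d+1)\mid|V(H)|$, and a natural route is induction on $|V(H)|$: delete a vertex $v$, apply the inductive hypothesis to obtain an equitable $(d+1)$-colouring of $H-v$ with classes $V_1,\dots,V_{d+1}$ of sizes $k,\dots,k,k-1$, and then reinsert $v$. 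If $v$ has no neighbour in the small class we are done, so among all equitable $(d+1)$-colourings of $H-v$ fix one minimising the number of neighbours of $v$ in the class of size $k-1$, and suppose this minimum is positive.

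\emph{The main obstacle.} With that minimal colouring fixed, $v$ has at most $d-1$ neighbours among the $d$ classes of size $k$, so some such class $V_i$ contains no neighbour of $v$; one then looks for a vertex $u\in V_i$ and a neighbour $w\in N(v)\cap V_{d+1}$ that can be swapped between $V_i$ and $V_{d+1}$ while keeping the colouring proper, which would lower $|N(v)\cap V_{d+1}|$ and contradict minimality. The difficulty --- and, I expect, the crux of the whole proof --- is the case where no such beneficial swap, nor any longer rotation, is available: one must iteratively build the family $\mathcal{A}$ of classes reachable from $V_{d+1}$ by chains of ``blocked'' swap configurations and then show, by a careful counting argument, that the vertices lying in $\bigcup\mathcal{A}$ collectively force some vertex of $H$ to have more than $d$ neighbours, contradicting $\Delta(H)\le d$. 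Making this structural/counting step precise and watertight is exactly the technical heart of the Hajnal--Szemer\'edi argument (and of its later simplifications). As a sanity check, the Regularity and Blow-up Lemmas yield the weaker \emph{asymptotic} threshold $\delta(G)\ge(1-1/r)n+o(n)$ with comparatively little effort, but pinning down the exact bound seems to genuinely require the combinatorial route above together with a separate analysis of the near-extremal configurations.
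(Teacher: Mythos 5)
This statement is the classical Hajnal--Szemer\'edi theorem, which the paper only cites (as~\cite{hs}) and does not prove, so there is no in-paper argument to compare against; I can only assess your attempt on its own terms. Your treatment of the ``moreover'' half is complete and correct: the complete join of an independent set of size $n/r+1$ with a clique on the remaining vertices has minimum degree exactly $(1-1/r)n-1$, and since every copy of $K_r$ meets the independent set in at most one vertex, the $n/r$ copies in a putative perfect tiling cannot cover all $n/r+1$ of its vertices. Your reduction of the sufficiency half to the equitable-colouring formulation (every graph with $\Delta(H)\le d$ has an equitable $(d+1)$-colouring, applied to $H=\overline{G}$ with $d=n/r-1$) is also the standard and correct equivalent statement.

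However, the sufficiency half has a genuine gap, and you acknowledge it yourself: the entire difficulty of the theorem is concentrated in the step you leave as a sketch, namely what to do when the vertex $v$ reinserted after the inductive step has a neighbour in the deficient class and no single swap (nor any chain of swaps through the ``reachable'' classes $\mathcal{A}$) repairs the colouring. Saying that one must ``show, by a careful counting argument, that the vertices lying in $\bigcup\mathcal{A}$ collectively force some vertex to have more than $d$ neighbours'' is a description of what a proof would accomplish, not a proof: the actual argument (in Hajnal--Szemer\'edi's original work, or in the Kierstead--Kostochka simplification) requires a delicate analysis of ``movable'' versus ``terminal'' vertices in the accessible classes and a non-obvious discharging/counting step, and without it the induction does not close. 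As it stands you have proved the extremal statement and correctly reduced the existence statement to a known equivalent, but you have not proved the existence statement.
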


Although the minimum degree condition in the Hajnal--Szemer\'edi theorem is tight, this does not mean one cannot strengthen this result. 
Indeed, Kierstead and Kostochka~\cite{kier} proved an \emph{Ore-type} generalisation of Theorem~\ref{hs} where now one replaces the minimum degree condition with the condition that the sum of the degrees of every pair of non-adjacent vertices in \(G\) is at least
\(2(1-1/r)n-1\). A conjecture of Balogh, Kostochka and Treglown~\cite[Conjecture~7]{bkt} would, if true, give a \emph{degree sequence} strengthening of the Hajnal--Szemer\'edi theorem; in this conjecture one allows for \(G\) to have almost \(n/r\) vertices of degree less than
\((1-1/r)n\). An asymptotic version of this conjecture was proven in~\cite{Tregs}.

There has also been significant interest in the minimum degree threshold that ensures a perfect \(H\)-tiling for an \emph{arbitrary} graph \(H\).
After  earlier work on this topic (see~e.g.~\cite{alonyuster, kssAY}),  
K\"uhn and Osthus~\cite{kuhn, kuhn2}  determined, up to an additive constant, the minimum degree that forces a perfect \(H\)-tiling for \emph{any} fixed graph \(H\).

The focus of this paper is not on \emph{perfect} \(H\)-tilings, but rather on \(H\)-tilings covering an \(x\)th proportion of a graph \(G\) (for some fixed \(x \in (0,1)\)).
The focal result on this topic is a theorem of Koml\'os~\cite{Komlos} which determines asymptotically the minimum degree that ensures a graph \(G\) contains an \(H\)-tiling covering an \(x\)th proportion of its vertices.
Before we can state this result, we require two definitions.
The \emph{critical chromatic number} \(\chi _{cr} (H)\) of a graph \(H\) is defined
as 
\[\chi _{cr} (H):=(\chi (H)-1)\frac{|H|}{|H|-\sigma (H)},\]
where \(\sigma (H)\) denotes the size of the smallest possible 
colour class in any \(\chi(H)\)-colouring of \(H\).
For all \(x \in (0,1)\), define
\[g_H(x):=x\left (1-\frac{1}{\chi _{cr} (H)} \right )+(1-x) \left (1- \frac{1}{r-1} \right ).\]

\begin{thm}[Koml\'os~\cite{Komlos}]\label{kom}
Suppose \(H\) is a graph of chromatic number \(r\). 
Given any \(\eta >0\), there exists an \(n_0=n_0(\eta,x,H)\in \mathbb N\) such that if \(G\) is a graph on \(n \geq n_0\) vertices and
\[\delta (G) \geq g_H(x)n\]
then \(G\) contains an \(H\)-tiling covering at least \((x-\eta)n\) vertices.
\end{thm}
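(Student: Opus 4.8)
\medskip
The plan is to prove Theorem~\ref{kom} by the standard machinery for tiling problems --- Szemer\'edi's Regularity Lemma together with the Blow-up Lemma --- the real content being an extremal statement about the reduced graph. First I would apply the Regularity Lemma to \(G\) with parameters \(0<\varepsilon\ll d\ll\eta,x,1/|H|\), obtaining an \(\varepsilon\)-regular equipartition of \(V(G)\) into a tiny exceptional set and \(k\) clusters of common size \(m=(1\pm o(1))n/k\); let \(R\) be the reduced graph, two clusters being adjacent exactly when the pair between them is \(\varepsilon\)-regular with density at least \(d\). A routine averaging argument transfers the hypothesis as \(\delta(R)\ge(g_H(x)-\theta)k\) with \(\theta=\theta(\varepsilon,d)\to0\). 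It then suffices to prove, and translate back, the following: \emph{if \(\delta(R)\ge(g_H(x)-\theta)k\) then \(R\) contains a family \(\mathcal F\) of pairwise vertex-disjoint complete \(r\)-partite subgraphs --- vertex classes being unions of whole clusters, all cross-pairs being edges of \(R\) --- covering at least \((x-o(1))k\) clusters, with each member having class sizes \(a_1,\dots,a_r\) satisfying \(\max_i a_i\le\bigl(\tfrac1{\chi_{cr}(H)}+o(1)\bigr)\sum_j a_j\) (call such a piece \(H\)-tileable).}

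Given such an \(\mathcal F\), for each member I would delete a few vertices from each of its clusters to make every cross-pair \emph{super}-regular and apply the Blow-up Lemma to the resulting complete multipartite blow-up. Here I use the classical fact --- which is precisely why \(\chi_{cr}(H)\) is defined --- that a complete \(r\)-partite graph with part sizes \(b_1,\dots,b_r\) has an \(H\)-tiling covering all but \(o\bigl(\sum b_i\bigr)\) of its vertices whenever \(\max_i b_i\le\tfrac1{\chi_{cr}(H)}\sum_j b_j\); this holds for our pieces by \(H\)-tileability, and is proved by writing down the \(H\)-tiling obtained from permuting a fixed optimal colouring of \(H\) among the parts and solving the resulting small linear system for the copy-multiplicities. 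Taking the union over \(\mathcal F\), the exceptional set, the vertices deleted for super-regularity, and the uncovered remnants of the blow-ups together form an \(o(1)\)-fraction of \(V(G)\), so the union is an \(H\)-tiling of \(G\) covering at least \((x-\eta)n\) vertices.

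The heart is the displayed statement about \(R\). Note \(\chi_{cr}(H)\in[r-1,r]\) (the upper end because \(\sigma(H)\le|H|/r\)), so \(g_H(x)\ge 1-\tfrac1{r-1}\), and \(g_H\) interpolates linearly between the Hajnal--Szemer\'edi/Tur\'an value \(1-\tfrac1{r-1}\) at \(x=0\) and the almost-perfect-tiling value \(1-\tfrac1{\chi_{cr}(H)}\) at \(x=1\). The extremal configuration --- which also shows \(g_H(x)\) is best possible --- is a balanced complete \((r-1)\)-partite graph on \(\bigl(1-\tfrac{x\sigma(H)}{|H|}\bigr)k\) clusters joined completely to a clique \(Z\) on the remaining \(\tfrac{x\sigma(H)}{|H|}k\) clusters: it has minimum degree \(\approx g_H(x)k\), yet any \(H\)-tileable complete multipartite piece inside it must meet \(Z\) in at least a \(\sigma(H)/|H|\)-fraction of its clusters, which caps the total coverage at \(xk\). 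For the positive direction I would pass to the natural fractional relaxation --- fractional \(H\)-tilings of \(R\) assigning weights to \(H\)-tileable complete multipartite configurations so that every cluster carries total weight at most one --- show by linear-programming duality that \(\delta(R)\ge g_H(x)k\) forces the optimum of this programme to be at least \(x+o(1)\) (the dual certificate being, up to \(o(1)\), a ``fat complete \((r-1)\)-partite part plus clique part'' partition of \(R\) exactly as in the extremal example, against which \(g_H(x)\) is the precise bound), and finally round an optimal fractional tiling to a genuine vertex-disjoint family \(\mathcal F\) of the required form while discarding only \(o(k)\) clusters.

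The main obstacle is exactly this reduced-graph step. Because \(g_H(x)<1-\tfrac1r\) for every \(x\in(0,1)\), the graph \(R\) need not contain a near-perfect \(K_r\)-tiling; and although a balanced \(K_r\)-blow-up is already \(H\)-tileable, using \emph{only} balanced pieces forces the weaker threshold \(g_{K_r}(x)>g_H(x)\), so obtaining the improvement down to \(g_H(x)\) genuinely requires unbalanced complete multipartite pieces. The delicate point is to choose the imbalances --- the sizes of the ``small'' classes --- globally across all pieces so that each one meets the \(\chi_{cr}(H)\)-ratio bound while the pieces still cover an \((x-o(1))\)-fraction of the clusters; this is precisely the optimisation in which the arithmetic of \(\chi_{cr}(H)\), \(r-1\) and \(x\) that defines \(g_H(x)\) is forced.
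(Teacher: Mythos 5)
You have set up the standard regularity framework correctly and, to your credit, you have located the difficulty precisely: everything reduces to showing that a reduced graph $R$ with $\delta(R)\ge (g_H(x)-o(1))|R|$ contains a suitable vertex-disjoint family of ``$H$-tileable'' complete multipartite pieces covering an $(x-o(1))$-fraction of the clusters. The surrounding steps are fine (inheritance of the degree condition by $R$; the fact that a complete $r$-partite graph whose largest part is at most a $1/\chi_{cr}(H)$-fraction of its order has an almost perfect $H$-tiling, obtained by averaging over permutations of the bottle-graph proportions $(\sigma,\omega,\dots,\omega)$). But the core step is not proved: you replace it with the assertion that the fractional relaxation has optimum at least $x+o(1)$ ``by linear-programming duality,'' with the dual certificate ``being, up to $o(1)$,'' the extremal configuration, and that an optimal fractional solution can be rounded while discarding only $o(k)$ clusters. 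Neither claim is justified, and together they are essentially the entire content of the theorem: classifying feasible dual solutions of an LP with one constraint per complete multipartite configuration is not routine, and rounding a fractional tiling by pieces of unbounded size (or, if you bound the piece size, showing the fractional optimum still reaches $x$ at the threshold $g_H(x)$) is a genuine further problem. As written, this paragraph restates the theorem in LP language rather than proving it.

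For comparison: the present paper does not reprove Theorem~\ref{kom} --- it cites Koml\'os, and instead derives the stronger Theorem~\ref{xdegseqKomlos} from Theorem~\ref{almostmain}. The mechanism used for exactly the step you leave open is Koml\'os' iterated blow-up argument, not LP duality: one works with tilings of $R$ by a fixed blown-up bottle graph $B^{*}$; if the largest such tiling covers too small a fraction of $R$, one finds either an ``expanding set'' (uncovered vertices that can be glued onto existing copies) or a ``swapping set'' (uncovered vertices exchangeable for covered ones of higher degree), passes to a blow-up $R(t)$, and shows that after boundedly many iterations the covered proportion strictly increases (Lemma~\ref{expandorswaplemma} and Claim~\ref{almostperfectclaim}). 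To complete your proof you would need either an augmenting argument of this kind or a genuine proof of your LP statement. A smaller issue: in your lower-bound construction (a complete $(r-1)$-partite graph completely joined to a clique $Z$) it is not clear that every copy of $H$ must use at least $\sigma(H)$ vertices of $Z$, since the vertices of $H$ mapped into the clique need not form a colour class of an $r$-colouring of $H$; the standard extremal example (Extremal Example 3 of this paper, and Theorem~7 of Koml\'os) makes the small part an independent set, for which that count is immediate.
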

Note that the minimum degree condition in Theorem~\ref{kom} is best possible in the sense that given any fixed \(H\) and \(x \in (0,1)\), one cannot replace \(g_H(x)\) with any fixed \(g'_H(x)<g_H(x)\) (see~\cite[Theorem~7]{Komlos} for a proof of this).
A consequence of the Erd\H{o}s--Stone theorem is that every \(n\)-vertex graph \(G\) with \(\delta (G)\geq (1-1/(\chi(H)-1)+o(1))n\) contains a copy of \(H\). So a way to interpret Theorem~\ref{kom} is that, for very small \(x>0\), the minimum degree threshold
is governed essentially by the value of \(\chi (H)-1\); however, as one increases \(x\), the value of \(\chi_{cr} (H)\) plays an increasing role in the value of the threshold.

An attractive  consequence of Theorem~\ref{kom} is the following result concerning \emph{almost} perfect \(H\)-tilings.
\begin{thm}[Koml\'os~\cite{Komlos}]\label{komcor}
Let \(\eta > 0\) and let \(H\) be a graph. Then there exists an \(n_0=n_0(\eta, H) \in \mathbb{N}\) such that  every graph \(G\) on \(n \geq n_0\) vertices
 with
\[\delta (G) \geq \left ( 1 -\frac{1}{\chi _{cr} (H)} \right )n\]
 contains an \(H\)-tiling covering all but at most \(\eta n\) vertices.
\end{thm}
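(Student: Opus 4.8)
The plan is to derive Theorem~\ref{komcor} directly from Theorem~\ref{kom}, applied with the proportion $x$ taken as close to $1$ as the permitted error $\eta$ allows. Write $r := \chi(H)$. I would first dispose of degenerate cases: we may assume $\eta \in (0,1)$, since decreasing $\eta$ only strengthens the conclusion (and if $r=1$ then $H$ is edgeless and an almost-perfect, indeed perfect, $H$-tiling is trivial once $n \ge |H|$), so in particular $r \ge 2$ and $g_H$ is well defined.

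The only computation to check is that the minimum degree hypothesis of Theorem~\ref{komcor} is at least as strong as that of Theorem~\ref{kom} for a suitable $x<1$. Set $a := 1 - 1/\chi_{cr}(H)$ and $b := 1 - 1/(r-1)$, so that $g_H(x) = xa + (1-x)b$ for $x \in (0,1)$. Since every colour class in a $\chi(H)$-colouring of $H$ is nonempty, $\sigma(H) \ge 1$, whence $\chi_{cr}(H) = (r-1)\,|H|/(|H|-\sigma(H)) > r-1$ and therefore $b \le a$; consequently $g_H(x) = xa + (1-x)b \le a$ for every $x \in (0,1)$. Now, given $\eta \in (0,1)$, I would put $x := 1 - \eta/2 \in (0,1)$ and let $n_0 := n_0(\eta/2, x, H)$ be the constant supplied by Theorem~\ref{kom} for the parameters $\eta/2$, $x$, $H$; then set $n_0(\eta, H) := n_0$. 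If $G$ has $n \ge n_0$ vertices and $\delta(G) \ge a n$, then in particular $\delta(G) \ge g_H(x)\,n$, so Theorem~\ref{kom} yields an $H$-tiling of $G$ covering at least $(x - \eta/2)n = (1-\eta)n$ vertices, i.e.\ all but at most $\eta n$ vertices of $G$, which is the desired conclusion.

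I do not expect any substantive obstacle: Theorem~\ref{komcor} is a direct corollary of Theorem~\ref{kom}, and the single point requiring a line of care is the elementary inequality $g_H(x) \le 1 - 1/\chi_{cr}(H)$ for $x<1$, which rests only on $\chi_{cr}(H) \ge \chi(H) - 1$ together with the affine (hence monotone) dependence of $g_H$ on $x$.
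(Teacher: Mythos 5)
Your derivation is correct and is exactly the route the paper intends: it states Theorem~\ref{komcor} as ``an attractive consequence of Theorem~\ref{kom}'' without writing out the details, and those details are precisely your observation that $g_H(x)\le 1-1/\chi_{cr}(H)$ for all $x\in(0,1)$ (since $\chi_{cr}(H)>\chi(H)-1$) together with the choice $x=1-\eta/2$. Nothing further is needed.
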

As with Theorem~\ref{kom}, the minimum degree condition in Theorem~\ref{komcor} is best possible in the sense that one cannot replace the   \((1 -{1}/{\chi _{cr} (H)})\) term here with any smaller fixed constant.
Despite this, Shoukoufandeh and  Zhao~\cite{szhao} proved that one can strengthen the conclusion of the theorem, to ensure the \(H\)-tiling covers all but a \emph{constant} number of vertices (this constant depends only on \(H\)).


The main result of this paper is to prove the following degree sequence strengthening of Theorem~\ref{komcor}.
\begin{thm} \label{almostmain}
Let \(\eta > 0\) and \(H\) be a graph with \(\chi(H) = r\). Let \(\sigma:= \sigma (H)\),  \(h := |H|\) and \(\omega := \left(h - \sigma\right)/(r-1)\). Then there exists an \(n_0 = n_0(\eta, H) \in \mathbb{N}\) such that the following holds: Suppose \(G\) is a graph on \(n\geq n_0\) vertices with degree sequence \(d_1 \leq d_2 \leq \ldots \leq d_n\) such that
\[d_i \geq \left(1 - \frac{\omega+\sigma}{h}\right)n + \frac{\sigma}{\omega}i \ \ \mbox{for all \ \(1 \leq i \leq \frac{\omega n}{h}\).}\]
 Then \(G\) contains an \(H\)-tiling covering all but at most \(\eta n\) vertices.
\end{thm}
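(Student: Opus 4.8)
The plan is to use the regularity method to reduce Theorem~\ref{almostmain} to a purely combinatorial statement about the \emph{reduced graph} of $G$, and then to prove that statement by a degree-sequence refinement of the argument behind Koml\'os' Theorem~\ref{komcor}. First observe that the ramp $d_i \ge (1-(\omega+\sigma)/h)n + (\sigma/\omega)i$ is calibrated so that at $i=\omega n/h$ it equals $(1-1/\chi_{cr}(H))n$ (as will be relevant when discussing extremal examples); thus the hypothesis genuinely weakens that of Theorem~\ref{komcor}, allowing up to $\omega n/h$ vertices of substantially smaller degree. Apply Szemer\'edi's Regularity Lemma to $G$ with constants $\varepsilon \ll d \ll \eta, 1/|H|$ to obtain an $\varepsilon$-regular partition $V_0, V_1, \dots, V_k$ with $|V_0|\le\varepsilon n$ and clusters $V_1,\dots,V_k$ of common size $m$, and form the reduced graph $R$ on $[k]$, with $ij\in E(R)$ whenever $(V_i,V_j)$ is $\varepsilon$-regular of density at least $d$.

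Let $F$ be the complete $r$-partite graph with one part of size $\sigma$ and $r-1$ parts of size $\omega$ (rescaling all part sizes by $r-1$ if $\omega\notin\mathbb N$). Since $\sigma\le h/r\le\omega$ we have $\sigma(F)=\sigma$ and $|F|=h$, hence $\chi_{cr}(F)=(r-1)h/(h-\sigma)=\chi_{cr}(H)$; moreover the balanced blow-up of $F$ with all parts of size $m$ is essentially perfectly $H$-tileable. Indeed, run $m$ copies of $H$, each time placing the colour class of size $\sigma$ in the distinguished part and cyclically rotating the remaining $r-1$ colour classes through the other $r-1$ parts; over each block of $r-1$ copies the distinguished part absorbs exactly $(r-1)\sigma$ vertices and every other part exactly $(r-1)\omega$. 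Consequently, after the usual clean-up of $\sqrt\varepsilon m$ vertices per cluster to pass to super-regular pairs, it suffices to find in $R$ a family of \emph{vertex-disjoint copies of $F$ covering all but $o(k)$ clusters}: each copy then yields, by greedily embedding these rotated $H$-copies into the corresponding super-regular $r$-partite blow-up — no Blow-up Lemma is needed, as only an almost perfect tiling is sought — an $H$-tiling covering all but $o(m)$ vertices of its clusters. Note that if $R$ had minimum degree $(1-1/\chi_{cr}(H))k$ this would follow at once from Theorem~\ref{komcor} applied to $R$ with target $F$; the real work is to make that conclusion robust to a degree-sequence hypothesis.

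Next I would transfer the hypothesis to $R$: for all but $\sqrt\varepsilon k$ clusters $V_i$ (those incident to few irregular pairs) and all but $\sqrt\varepsilon m$ vertices $v\in V_i$ one has $\deg_G(v)\le \deg_R(i)m + 2\sqrt d\,n$, since few vertices of a good cluster have many neighbours across sparse pairs. Combined with the fact that the hypothesis forces at most $i$ vertices of $G$ to have degree below $(1-(\omega+\sigma)/h)n+(\sigma/\omega)i$, a counting argument shows that, after deleting the bad clusters, $R$ has degree sequence $d_1'\le\dots\le d_{k'}'$ with $d_j'\ge(1-(\omega+\sigma)/h-\varepsilon')k+(\sigma/\omega)j$ for all $j\le(\omega/h-\varepsilon')k$, where $\varepsilon'\to 0$ as $\varepsilon,d\to 0$. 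The combinatorial core is then: every $k$-vertex graph with this degree sequence (up to a small slack) has an $F$-tiling covering all but $o(k)$ vertices — a degree-sequence version of Koml\'os' theorem for the \emph{complete multipartite} target $F$, hence with no further regularity content. I would attack it through the fractional relaxation (an almost perfect fractional $F$-tiling rounds to an almost perfect integral one with $o(k)$ loss) together with LP-duality, where a failure produces a fractional cover of $V(R)$ by cliques of order at most $r$ and by independent sets exhibiting an extremal configuration, which one shows contradicts the ramp. \emph{This is the step I expect to be the main obstacle.} The delicate point is that the lowest-degree clusters may have degree below $(1-1/(r-1))k$ and hence lie in no $K_r$ of $R$ at all, so they cannot be absorbed greedily one at a time; one must argue globally, combining Koml\'os' fractional/duality technique with the stability-type case analysis developed for the degree-sequence Hajnal--Szemer\'edi theorem, so as either to build the $F$-tiling outright or to force $R$ so close to the extremal example that the degree-sequence hypothesis fails.

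Finally I would assemble the pieces: given the near-perfect $F$-tiling of $R$, apply the embedding of the second paragraph to each copy of $F$ and take the union of the resulting $H$-tilings. The uncovered vertices of $G$ then lie in the $o(k)$ clusters missed by the $F$-tiling, in the $O(\sqrt\varepsilon m)$ vertices deleted per clean-up, or in $V_0$, for a total of $o(n)\le\eta n$ provided $n\ge n_0(\eta,H)$, which yields the desired $H$-tiling of $G$.
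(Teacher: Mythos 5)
Your setup --- regularity lemma, passing to the reduced graph $R$, replacing $H$ by the bottle graph $F$ (complete $r$-partite with parts $\sigma,\omega,\dots,\omega$), inheriting the degree sequence in $R$, and converting an almost perfect $F$-tiling of (a blow-up of) $R$ into an $H$-tiling of $G$ --- matches the architecture of the paper's proof. But the combinatorial core, which you yourself flag as ``the main obstacle,'' is left unproved, and it is essentially the entire content of the theorem. The statement ``every $k$-vertex graph with this degree sequence (up to a small slack) has an $F$-tiling covering all but $o(k)$ vertices'' is, modulo error terms, Theorem~\ref{almostmain} itself in the special case of bottle graphs; reducing the theorem to it via regularity is circular unless a genuinely new argument is supplied, and ``fractional relaxation plus LP duality plus stability-type case analysis'' is a strategy label rather than an argument. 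Two further points need care. First, your slack goes the wrong way: transferring the hypothesis to $R$ yields the ramp \emph{minus} $\varepsilon' k$, whereas Extremal Example~1 shows the ramp cannot be lowered even slightly on a short segment; the paper handles this by first proving an error-term version (Theorem~\ref{almostmainerror}, hypothesis plus $\eta n$) and then deriving the exact version by adjoining $\tau n$ dominating vertices. Second, $R$ has boundedly many vertices, so one cannot simply ``apply the core statement to $R$'' as if $k$ were large relative to the tiling error.

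The paper's resolution of the core is genuinely different from what you sketch and is where the work lies. It does not produce an almost perfect $B^{*}$-tiling in $R$ itself; it iteratively blows $R$ up. If the largest $B^{*}$-tiling $T$ of $R$ covers at most $(1-\eta/2)k$ vertices, Lemma~\ref{expandorswaplemma} yields either an \emph{expanding set} (linearly many uncovered vertices, each with a neighbour in every $\omega$-class of a distinct tile of $T$, so that after one blow-up the covered proportion strictly increases) or a \emph{swapping set} (linearly many uncovered vertices that can be exchanged into tiles in place of covered vertices sitting strictly higher in the degree ordering). A swap does not increase the covered proportion, but it strictly increases the sum of the degree-ranks of the uncovered vertices --- a potential function bounded above by $k^{2}(mt)^{2j}$ --- so after boundedly many swap-and-blow-up rounds an expanding set must appear, and after boundedly many expansion rounds one obtains an almost perfect $B^{*}$-tiling of a bounded blow-up $R'$ of $R$, which the Key Lemma converts into a tiling of $G$. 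This rotation/potential-function mechanism, encoded in the notions of expanding and swapping sets, is the missing idea in your proposal; without it (or a worked-out substitute for your LP-duality step) the argument does not close.
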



Note that if one considers an \(r\)-partition of \(H\) with smallest vertex class of size \(\sigma =\sigma(H)\) and
 set \(i={\omega n}/{h}\) 
 then we obtain that \( \left(1 - ({\omega+\sigma})/{h}\right)n + {\sigma}i/{\omega} = 
 1-1/\chi _{cr} (H) \). Thus, Theorem~\ref{almostmain} is a significant strengthening of Theorem~\ref{komcor}. Indeed, Theorem~\ref{almostmain} allows for up to \(\omega n/h\) vertices to have degree below that in Theorem~\ref{komcor}.
In particular,  when \(H\) is bipartite, the degree sequence condition in Theorem~\ref{almostmain} starts at \(d_1\geq 1\) and allows for at least half of the vertices of \(H\) to have degree less than that required by Koml\'os' theorem.
Figure~\ref{degseqkomthmfigure} gives a visualisation of the degree sequence in Theorem~\ref{almostmain}. Figure~\ref{graphtable} presents some key properties of the degree sequence in Theorem~\ref{almostmain} for several graphs. Here, `Angle of slope' refers to the value \(\sigma/\omega\). 

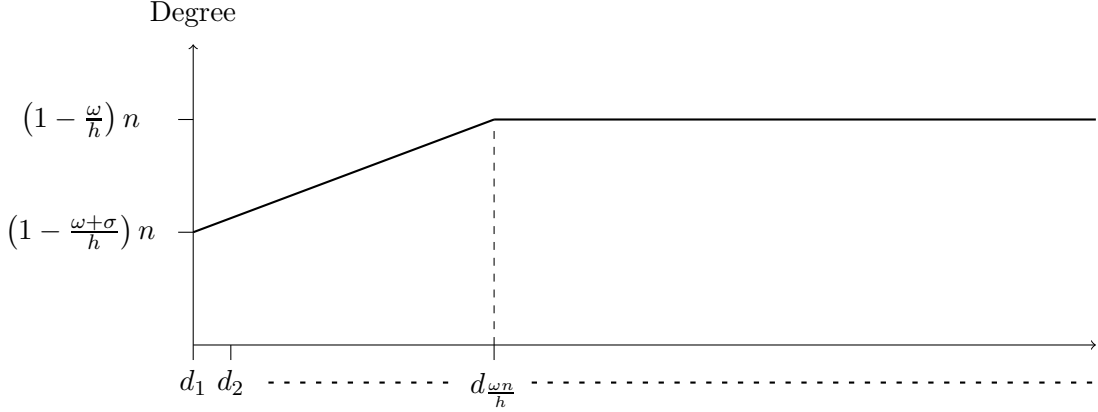
\begin{figure}
\begin{center}
\begin{tikzpicture}\label{degseqkomthmfigure}

\draw[->] (-4,2) -- (-4,6);
\draw[->] (-4,2) -- (8,2);
\draw (-4,3.5) -- (-4.2,3.5);
\draw (0,2) -- (0,1.8);
\draw (-4,5) -- (-4.2,5);
\node at (-4,1.5) {\(d_1\)};
\draw (-4,2) -- (-4,1.8);
\draw (-3.5,2) -- (-3.5,1.8);
\node at (-3.5,1.5) {\(d_2\)};
\draw[dash pattern=on 2pt off 4pt, thick] (-3,1.5) -- (-0.5,1.5);
\node at (0,1.4) {\(d_{\frac{\omega n}{h}}\)};
\draw[dash pattern=on 2pt off 4pt, thick] (0.5,1.5) -- (8,1.5);
\node at (-5.5,3.5) {\(\left(1 - \frac{\omega+\sigma}{h}\right)n\)};
\node at (-5.5,5) {\(\left(1 - \frac{\omega}{h}\right)n\)};
\draw[black,thick] (-4,3.5) -- (0,5);
\draw[black,thick] (0,5) -- (8,5);
\node at (-4,6.4) {Degree};
\draw[dashed] (0,2) -- (0,4.95);
\end{tikzpicture}
\caption{The degree sequence in Theorem~\ref{almostmain}.}
\end{center}
\end{figure}

\begin{figure}
\begin{center}
\begin{tabular}{ |c|c|c|c| } 
 \hline
 Graph          & Bound on \(d_1\)   & Bound on \(d_{\frac{\omega n}{h}}\)      & Angle of slope \\ 
 \hline
 \(C_5\)        & \(2n/5\)           & \(3n/5\)                                 & \(1/2\) \\ 
 \(K_{1,t}\)    & \(1\)              & \(n/(t+1)\)                              & \(1/t\) \\ 
 \(K_t\)        & \((t-2)n/t\)       & \((t-1)n/t\)                             & \(1\)   \\ 
 \(K_{2,4,6}\)  & \(5n/12\)          & \(7n/12\)                                & \(2/5\) \\ 
 \hline
\end{tabular}

\caption{Values of the start points, end points and angles of the slope in Theorem~\ref{almostmain} for certain graphs.}\label{graphtable}

\end{center}
\end{figure}

The degree sequence in Theorem~\ref{almostmain} is best possible in more than one sense for many graphs \(H\). For all graphs \(H\), one cannot allow significantly more than \(\omega n/h\) vertices to have degree below the `Koml\'os threshold', so in this sense the bound on the number of `small degree' vertices in Theorem~\ref{almostmain} is tight. Further, for many  graphs \(H\), we show that the degree sequence cannot start at a lower value and the angle of the `slope' in Figure~\ref{degseqkomthmfigure} is best possible. This is discussed in more depth in Section~\ref{Extremal}.

Theorem~\ref{almostmain} deals with almost perfect tilings. A natural question now is whether such a degree sequence strengthening also exists for tilings covering an \(x\)th proportion of vertices, as in Theorem~\ref{kom}. Indeed, the following result is a straightforward consequence of Theorem~\ref{almostmain}.

\begin{thm}\label{xdegseqKomlos}
Let \(x \in (0,1)  \) and \(H\) be a graph with \(\chi(H) = r\). Set \(\eta > 0\). Let \(\sigma:= \sigma (H)\),  \(h := |H|\) and \(\omega := \left(h - \sigma\right)/(r-1)\). Then there exists an \(n_0 = n_0(\eta,x, H) \in \mathbb{N}\) such that the following holds: Suppose \(G\) is a graph on \(n\geq n_0\) vertices with degree sequence \(d_1 \leq d_2 \leq \ldots \leq d_n\) such that
\[d_i \geq \left(g_H(x) - \frac{x\sigma}{h}\right)n + \frac{(r-1)x\sigma}{h - x\sigma}i \ \ \mbox{for all \ \(1 \leq i \leq \left(\frac{h - x\sigma}{(r-1)h}\right)n\).}\] 
Then \(G\) contains an \(H\)-tiling covering at least \((x-\eta)n\) vertices.
\end{thm}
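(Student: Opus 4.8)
The plan is to derive Theorem~\ref{xdegseqKomlos} from Theorem~\ref{almostmain} by a standard ``split the host graph'' argument: we peel off a set of vertices on which we will apply the almost-perfect tiling result, and argue that the leftover vertices either already contain enough of an $H$-tiling or can be absorbed by a Turán-type embedding. First I would fix a small $\eta'>0$ (depending on $\eta$, $x$ and $H$) and set $n_0$ large. Let $G$ be as in the statement, with degree sequence $d_1\le\dots\le d_n$. The key observation is that the degree sequence bound in Theorem~\ref{xdegseqKomlos} has been engineered so that, after removing an appropriate set $U$ of roughly $(1-x)n$ vertices of largest degree, the graph $G':=G-U$ on $n':=|U^c|\approx xn$ vertices satisfies the degree sequence hypothesis of Theorem~\ref{almostmain} with $n$ replaced by $n'$. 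Concretely, one checks that for the smallest $\omega n'/h$ vertices of $G'$, their degree inside $G'$ is at least $d_i-|U|\ge d_i-(1-x)n$, and substituting the bound on $d_i$ from Theorem~\ref{xdegseqKomlos} and simplifying (using $g_H(x)=x(1-1/\chi_{cr}(H))+(1-x)(1-1/(r-1))$ and the identity $1-1/\chi_{cr}(H)=(1-(\omega+\sigma)/h)+\sigma/\omega\cdot\omega/h$) yields exactly $\bigl(1-\tfrac{\omega+\sigma}{h}\bigr)n'+\tfrac{\sigma}{\omega}i$. So the arithmetic is really the heart of the matter: one must verify that the affine function in the hypothesis here, restricted to $i\le \tfrac{h-x\sigma}{(r-1)h}n$, becomes the affine function of Theorem~\ref{almostmain} on the scaled index range $i\le \omega n'/h$.

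Having set this up, I would apply Theorem~\ref{almostmain} to $G'$ to obtain an $H$-tiling $\mathcal{T}$ in $G'$ covering all but at most $\eta' n'\le \eta' n$ vertices of $G'$, hence covering at least $(x-\eta'-o(1))n\ge (x-\eta)n$ vertices of $G$ for suitable choice of $\eta'$ and $n_0$. This immediately gives the desired $H$-tiling in $G$, since $\mathcal{T}\subseteq G'\subseteq G$.

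There is one subtlety to handle carefully: we must be sure that such a set $U$ of the $(1-x)n$ largest-degree vertices actually exists with the property that the \emph{induced} degree sequence of $G'=G-U$ is controlled from below by $d_i-(1-x)n$ in the right index range. This is automatic — removing $\lfloor(1-x)n\rfloor$ vertices lowers every degree by at most $|U|$, and the $i$-th smallest degree of $G'$ is at least the $i$-th smallest degree of $G$ minus $|U|$ — but one should also check the boundary/rounding issues (that $\omega n'/h$, after rounding, does not exceed $\tfrac{h-x\sigma}{(r-1)h}n$, which holds since $n'\le xn$ and $\tfrac{\omega x}{h}=\tfrac{(h-\sigma)x}{(r-1)h}\le \tfrac{h-x\sigma}{(r-1)h}$). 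The main obstacle, such as it is, is purely bookkeeping: choosing the error parameters in the right order ($\eta$ given, then $\eta'$, then $n_0$) and confirming that the two affine bounds coincide under the substitution $n\mapsto n'=xn$; there is no new combinatorial content beyond Theorem~\ref{almostmain} itself. Hence the proof is short, and I would present it as a direct reduction.
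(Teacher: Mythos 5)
Your reduction has a genuine gap: the arithmetic at its heart does not close. Write $g_H(x)=1-\frac{x\omega}{h}-\frac{1-x}{r-1}$. If $U$ is any set of $(1-x)n$ vertices and $G':=G-U$, then the best lower bound your argument certifies is $d_{G',i}\ge d_i-(1-x)n$, and substituting the hypothesis of Theorem~\ref{xdegseqKomlos} gives
\[
d_i-(1-x)n\ \ge\ \Bigl(1-\tfrac{\omega+\sigma}{h}\Bigr)xn\;-\;\frac{(1-x)n}{r-1}\;+\;\frac{(r-1)x\sigma}{h-x\sigma}\,i,
\]
whereas Theorem~\ref{almostmain} applied to $G'$ on $n'=xn$ vertices needs $\bigl(1-\frac{\omega+\sigma}{h}\bigr)xn+\frac{\sigma}{\omega}i$. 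So the constant term falls short by $\frac{(1-x)n}{r-1}$, and the slope $\frac{(r-1)x\sigma}{h-x\sigma}$ is also strictly smaller than $\frac{\sigma}{\omega}=\frac{(r-1)\sigma}{h-\sigma}$ for $x<1$; the two affine functions do not coincide under $n\mapsto xn$. This is not a bookkeeping issue. The term $(1-x)\bigl(1-\frac{1}{r-1}\bigr)$ in $g_H(x)$ is deliberately smaller than $(1-x)$, which is what a ``delete $(1-x)n$ vertices and tile the rest almost perfectly'' reduction would require; as $x\to 0$ the hypothesis degenerates to the Erd\H{o}s--Stone condition, which certainly does not survive deletion of almost all vertices. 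Concretely, take $H=K_3$ and $x$ small, and let $G$ be the complete tripartite graph with parts of sizes $\frac{xn}{3},\ \frac{(3-x)n}{6},\ \frac{(3-x)n}{6}$ (perturbed slightly so the hypothesis holds): the highest-degree vertices are exactly those in the tiny part, so deleting the $(1-x)n$ largest-degree vertices leaves a bipartite, hence triangle-free, graph $G'$ — there is no $H$-tiling in $G'$ at all.

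The paper's reduction to Theorem~\ref{almostmain} runs in the opposite direction: instead of shrinking $G$, it enlarges $H$. Writing $x=a/b$ (one may assume $x$ rational), let $H_1$ be the $r$-partite bottle graph with neck $\sigma_1=a(r-1)\sigma$ and width $\omega_1=bh-a\sigma$, so $|H_1|=b(r-1)h$. Then (i) $H_1$ contains an $H$-tiling covering exactly $x|H_1|$ vertices (place $a(r-1)$ copies of $H$ with their $\sigma$-classes inside the neck of $H_1$), and (ii) a direct computation shows that the degree sequence hypothesised in Theorem~\ref{xdegseqKomlos} is \emph{identical} to the degree sequence of Theorem~\ref{almostmain} with $H_1$ playing the role of $H$. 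Applying Theorem~\ref{almostmain} to all of $G$ with input $H_1$ yields an almost perfect $H_1$-tiling, and converting each copy of $H_1$ into an $H$-tiling of an $x$-proportion of its vertices gives at least $x(1-\eta)n>(x-\eta)n$ covered vertices. If you want to repair your write-up, this change of auxiliary graph (rather than a change of host graph) is the step to adopt.
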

\begin{figure}
\begin{center}
\begin{tikzpicture}

\draw (-4,5.5) -- (-4.2,5.5);
\draw[->] (-4,0) -- (-4,6.5);
\draw[->] (-4,0) -- (8,0);
\draw (-4,2.5) -- (-4.2,2.5);
\draw (0,0) -- (0,-0.2);
\draw (-4,5) -- (-4.2,5);
\node at (-4,-0.5) {\(d_1\)};
\draw (-4,0) -- (-4,-0.2);
\draw (-3.5,0) -- (-3.5,-0.2);
\node at (-3.5,-0.5) {\(d_2\)};
\draw[dash pattern=on 2pt off 4pt, thick] (-3,-0.5) -- (-0.5,-0.5);
\draw[dash pattern=on 2pt off 4pt, thick] (2.5,-0.5) -- (8,-0.5);
\node at (-5.5,2.5) {\(\left(1 - \frac{\omega+\sigma}{h}\right)n\)};
\node at (-5.25,5.5) {\(\left(1 - \frac{\omega}{h}\right)n\)};
\draw[black,thick] (-4,2.5) -- (0,5.5);
\draw[black,thick] (0,5.5) -- (8,5.5);
\node at (-4,6.9) {Degree};
\draw[dashed] (0,0) -- (0,5.45);
\draw[dashed] (1,0) -- (1,4.95);
\draw[dashed] (2,0) -- (2,4.45);
\draw (-4,3) -- (-4.2,3);
\draw (-4,3.5) -- (-4.2,3.5);
\draw (-4,4) -- (-4.2,4);
\draw (-4,4.5) -- (-4.2,4.5);
\draw (-4,5) -- (-4.2,5);
\node at (-6.2,3) {\(\left(g_H(2/3)- 2\sigma/3h\right)n\)};
\node at (-6.1,3.5) {\(\left(g_H(1/3)- \sigma/3h\right)n\)};
\node at (-5.85,4) {\(\left(1 - 1/(r-1)\right)n\)};
\node at (-5.25,4.5) {\(g_H(1/3)n\)};
\node at (-5.25,5) {\(g_H(2/3)n\)};
\draw[dash pattern=on 8pt off 4pt, thick] (-4,3) --  (1,5);
\draw[dash pattern=on 8pt off 4pt, thick] (1,5) -- (8,5);
\draw[dash pattern=on 5pt off 4pt, thick] (-4,3.5) --  (2,4.5);
\draw[dash pattern=on 5pt off 4pt, thick] (2,4.5) -- (8,4.5);
\draw[black,dashed] (-4,4) -- (8,4);
\draw (1,0) -- (1,-0.2);
\draw (2,0) -- (2,-0.2);
\node at (-0.15,-0.65) {\rotatebox{45}{\(d_{\frac{\omega n}{h}}\)}};
\node at (0.45,-1) {\rotatebox{45}{\(d_{\left(\frac{3h - 2\sigma}{3(r-1)h}\right)n}\)}};
\node at (1.45,-1) {\rotatebox{45}{\(d_{\left(\frac{3h - \sigma}{3(r-1)h}\right)n}\)}};
\end{tikzpicture}

\caption{The degree sequence in Theorem \ref{xdegseqKomlos} for \(x = 2/3\) (long dashed),  \(x = 1/3\) (medium dashed).}

\end{center}
\end{figure}
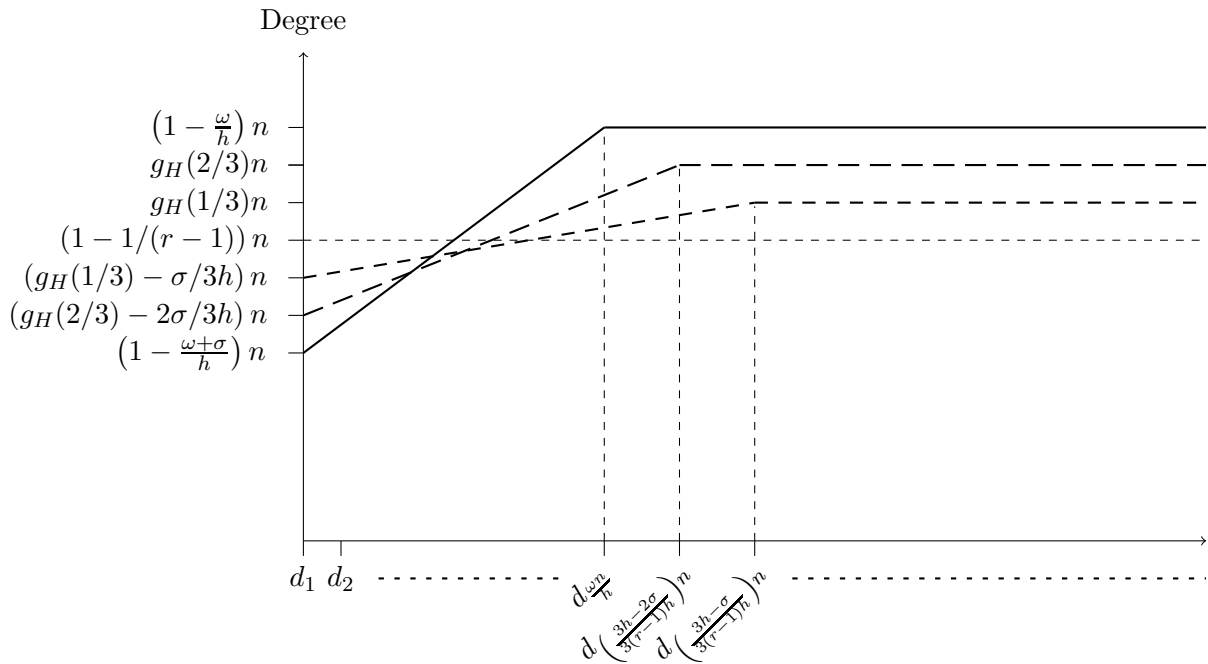

Theorem~\ref{xdegseqKomlos} is an improvement on Theorem~\ref{kom}. Indeed, Theorem~\ref{xdegseqKomlos} allows for almost \linebreak\((h - x\sigma)n/(r-1)h\) vertices to have degree below \(g_H(x)n\). Observe that as \(x\) approaches 0, the degree sequence condition in Theorem~\ref{xdegseqKomlos} tends towards the condition \(\delta(G) \geq (1-1/(r-1))n\), and thus  accords with the Erd\H{o}s--Stone theorem.


Piguet and Saumell~\cite[Theorem 1.3]{ps} recently proved another generalisation of Theorem~\ref{kom}. In their result they only require a certain fraction of the vertices to satisfy the degree condition of Theorem~\ref{kom}, and all other vertices have \emph{no} restriction on their degree (so some could even be isolated vertices). Note though that our result allows for more vertices to have small degree (i.e. smaller than the bound in Theorem~\ref{kom}), at a price of having some restriction of the degrees of these vertices. In the case of almost perfect \(H\)-tilings, Theorem~\ref{almostmain}
allows a large proportion of the vertices to have small degree, whilst in this case \cite[Theorem 1.3]{ps} corresponds precisely to Koml\'os' theorem (Theorem~\ref{komcor} above).

As well as considering minimum degree and degree sequence conditions, it is also natural to seek conditions on the density of a graph $G$ that forces an $H$-tiling covering a given fraction of the vertices of $G$. We remark though that only limited progress has been
made on this question (though Allen, B\"ottcher, Hladk\'y and Piguet~\cite{abhp} did resolve this problem in the case of $K_3$-tilings).

\smallskip

{\noindent \bf{Organisation of the paper.}}
The paper is organised as follows. In the next section we provide some essential notation and definitions. Then in Section~\ref{Extremal} we give  extremal examples for both Theorems~\ref{almostmain} and~\ref{xdegseqKomlos}. We introduce an `error term' version (Theorem~\ref{almostmainerror}) of Theorem~\ref{almostmain} in Section~\ref{precede} and show that it implies Theorem~\ref{almostmain}. Szemer\'{e}di's Regularity lemma and several auxiliary results are presented in Section~\ref{szereg}. Then in Section~\ref{Tools} we provide the tools that we will need to prove Theorem~\ref{almostmainerror}. We prove a result that iteratively constructs an almost perfect \(H\)-tiling and then use this result to prove Theorem~\ref{almostmainerror} in Section~\ref{almost}. To conclude Section~\ref{almost}, we show that Theorem~\ref{almostmain} implies Theorem~\ref{xdegseqKomlos}.

\section{Notation and Definitions}

Let \(G\) be a graph. We define \(V(G)\) to be the vertex set of \(G\) and \(E(G)\) to be the edge set of \(G\). Let \(X \subseteq V(G)\). Then \(G[X]\) is the \textit{graph induced by \(X\) on \(G\)} and has vertex set \(X\) and edge set \(E(G[X]) := \{xy \in E(G): x,y \in X\}\). We also define \(G\setminus X\) to be the graph with vertex set \(V(G)\setminus X\) and edge set \(E(G\setminus X):= \{xy \in E(G): x,y \in V(G)\setminus X\}\). For each \(x \in V(G)\), we define the \textit{neighbourhood of \(x\) in \(G\)} to be \(N_G(x):= \{y\in V(G): xy \in E(G)\}\) and define \(d_G(x) := |N_G(x)|\). We drop the subscript \(G\) if it is clear from context which graph we are considering. 
We write $d_G(x,X)$ for the number of edges in $G$ that $x$ sends to vertices in $X$. Given a subgraph $G'\subseteq G$, we will write $d_G(x,G'):=d_G(x,V(G'))$.
Let \(A, B \subseteq V(G)\) be disjoint. Then we define \(e_G(A,B):=|\{xy \in E(G): x\in A, y\in B\}|\).

Let \(t \in \mathbb{N}\). We define the \emph{blow-up} \(G(t)\) to be the graph constructed by first replacing each vertex \(x \in V(G)\) by a set \(V_x\) of \(t\) vertices and then replacing each edge \(xy \in E(G)\) with the edges of the complete bipartite graph with vertex sets \(V_x\) and \(V_y\).

Let \(v \in \mathbb{N}\). We will refer to a vertex class of size \(v\) of \(G\) as a \textit{\(v\)-class} of \(G\). Set \(r, \sigma, \omega \in \mathbb{N}\) and \(\sigma <\omega\). We define the \textit{\(r\)-partite bottle graph \(B\) with neck \(\sigma\) and width \(\omega\)} to be the complete \(r\)-partite graph with one \(\sigma\)-class and \((r-1)\) \(\omega\)-classes.

Let \(i \in \mathbb{N}\) and \(H_1, H_2, \ldots, H_i\) be a collection of graphs. We define an \textit{\((H_1, H_2, \ldots, H_i)\)-tiling in \(G\)} to be a collection of vertex-disjoint copies of graphs from the set \(\{H_1, H_2, \ldots, H_i\}\) in \(G\). An \((H_1, H_2, \ldots, H_i)\)-tiling is called \textit{perfect} if it covers all vertices in \(G\). 

We write \(0 < a \ll b \ll c < 1\) to mean that we can choose the constants \(a,b,c\) from right to left. More precisely, there exist non-decreasing functions \(f: (0,1] \to (0,1]\) and \(g: (0,1] \to (0,1]\) such that for all \(a \leq f(b)\) and \(b \leq g(c)\) our calculations and arguments in our proofs are correct. Larger hierarchies are defined similarly. Note that \(a \ll b\) implies that we may assume e.g. \(a < b\) or \(a < b^2\).

\section{Extremal Examples}\label{Extremal}
In this section we present three extremal examples. The first demonstrates that the `slope' of the degree sequence in Theorem~\ref{almostmain} is best possible for bottle graphs. 
The second shows that for many graphs $H$, the degree sequence in Theorem~\ref{almostmain} `starts' at the correct place.
The third shows that, for any graph \(H\), to ensure an \(H\)-tiling covering at least \((x - \eta)n\) vertices we cannot have significantly more than \((h-x\sigma)n/(r-1)h\) vertices with degree below the `Koml\'{o}s threshold' of \(g_H(x)n\). 

\smallskip

\noindent
{\bf Extremal Example 1.} \textnormal{Set \(\eta \in \mathbb{R}\). Let \(B\) be an \(r\)-partite bottle graph with neck \(\sigma\) and width \(\omega\), where \(b := |B|\). The following extremal example \(G\) on \(n\) vertices demonstrates that Theorem~\ref{almostmain} is best possible for such graphs \(B\), in the sense that \(G\) satisfies the degree sequence of Theorem~\ref{almostmain} except for a small linear part that only just fails the degree sequence, but does not contain a \(B\)-tiling covering all but at most \(\eta n\) vertices.} 

\begin{prop}\label{exex1}
Set \(\eta \in \mathbb{R}\) and \(n \in \mathbb{N}\) such that \(0 < 1/n \ll \eta \ll 1\). Let \(B\) be a bottle graph with neck \(\sigma\) and width \(\omega\), where \(b := |B|\). 
Additionally assume that $b$ divides $n$.
Then for any \(1 \leq k < \omega n/b - 2\eta n\), there exists a graph \(G\) on \(n\) vertices whose degree sequence \(d_1 \leq \ldots \leq d_n\) satisfies 
\[d_i \geq \left(1 - \frac{\omega+\sigma}{b}\right)n + \frac{\sigma}{\omega}i \ \ \mbox{for all \ \(i \in \{1, \ldots, k-1, k+2\eta n +1, \ldots, \omega n/b\)\},}\] 
\[d_i = \left(1 - \frac{\omega+\sigma}{b}\right)n + \left\lceil\frac{\sigma}{\omega}k\right\rceil \ \ \mbox{for all \ \(k \leq i \leq k+2\eta n\),}\]  but such that \(G\) does not contain a \(B\)-tiling covering all but at most \(\eta n\) vertices.
\end{prop}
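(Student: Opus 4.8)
The plan is to construct an explicit graph $G$ based on the standard extremal example for Komlós' theorem (Theorem~\ref{kom}), perturbed on a linear-size set of vertices so that the degree sequence dips just below the required threshold exactly on the block $\{k, \dots, k+2\eta n\}$. Recall the shape of the extremal construction: partition $V(G)$ into parts whose relative sizes are chosen so that every copy of $B$ in $G$ must use a disproportionately large number of vertices from one ``small'' part, and then make that small part too small to allow a near-perfect $B$-tiling. Concretely, I would take a set $U$ of size roughly $(1 - \omega/b)n$ and let $G[U]$ be complete; the remaining $(\omega/b)n$ vertices form an independent set $W$ joined completely to $U$. Any copy of the bottle graph $B$ (which has chromatic number $r$, one colour class of size $\sigma$ and $r-1$ colour classes of size $\omega$) placed in $G$ can use at most $\omega$ vertices of $W$ per copy that has its $\sigma$-class in $U$, or better $\sigma + (r-2)\omega$ vertices of $U$ and $\omega$ of $W$ — in any case, the ratio of $U$-vertices to $W$-vertices used by each copy is at least $\sigma/\omega$ (this is exactly where the slope $\sigma/\omega$ comes from). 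By a counting argument, if the $B$-tiling covers all but $\eta n$ vertices, then $|U|$ must be at least (roughly) $(\sigma/\omega)|W|$ times the number of copies, forcing $|U|$ to be larger than it is; hence no such tiling exists. First I would make these part sizes precise so that $|W| = \omega n / b$, and verify the extremal (non-tileability) claim via this simple edge/vertex count.

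Next I would arrange the degrees. In the clean construction, the $W$-vertices have degree $|U| = (1-\omega/b)n$, and the $U$-vertices have much larger degree, so the unperturbed degree sequence already dominates the line $(1 - (\omega+\sigma)/b)n + (\sigma/\omega)i$ for $i$ up to $\omega n / b$, provided $W$ is slightly enlarged relative to the pure Komlós example. The second step is to introduce the slope: rather than a single independent set $W$ of uniform degree, I partition $W$ into $\omega n / b$ groups and, for the $i$-th group, delete edges between that group and a suitable subset of $U$ so that the degree of those vertices becomes exactly $\lceil (1 - (\omega+\sigma)/b)n + (\sigma/\omega)i \rceil$ — i.e.\ I make $G$ hug the degree-sequence line as tightly as possible while still being non-tileable. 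For the target block $k \le i \le k+2\eta n$, I instead fix the degree at the constant value $(1 - (\omega+\sigma)/b)n + \lceil (\sigma/\omega)k \rceil$, which is strictly below the required line for $i > k$ but (since $2\eta n$ is small and $\sigma/\omega \le 1$) only fails by $o(n)$, so the non-tileability argument is unaffected by these deletions (one must check that removing $O(\eta n)$ edges per vertex across $O(\eta n)$ vertices, i.e.\ $O(\eta n^2)$ edges total, cannot create enough ``slack'' in $|U|$ versus $|W|$ to allow a tiling — this holds because $\eta \ll 1$). The perturbation should be done on the $W$-side only, keeping $G[U]$ complete, so $U$-vertices retain large degree and the ordering $d_1 \le \dots \le d_n$ places all $W$-vertices first.

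The remaining work is bookkeeping: check divisibility ($b \mid n$ is assumed, which lets the part sizes be integers), confirm that after the perturbation the sorted degree sequence indeed satisfies the stated bounds with equality on $\{k,\dots,k+2\eta n\}$ and strict inequality (or $\geq$) elsewhere in the range, and re-run the counting argument to confirm non-tileability survives. I expect the main obstacle to be the counting argument showing non-tileability is robust under the edge deletions: one needs that every copy of $B$ in the perturbed $G$ still uses at least a $\sigma/(\sigma+\omega)$ fraction of its vertices in $U$ (or an appropriate variant), and that the total deficit in $|U|$ caused by both the original construction and the deletions is enough to block a tiling covering $(1-\eta)n$ vertices. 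This requires being careful that $k < \omega n/b - 2\eta n$ keeps the perturbed block strictly inside the $W$-range, and that the constant $2\eta n$ of ``flat'' vertices is chosen small relative to the gap between $|U|$ and $(\sigma/\omega)|W|$ that makes the example extremal; a short calculation with the $\eta \ll 1$ hierarchy should close this.
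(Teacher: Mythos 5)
Your base construction is not extremal, and this breaks the whole argument. A clique $U$ of size $(1-\omega/b)n$ completely joined to an independent set $W$ of size $\omega n/b$ admits a \emph{perfect} $B$-tiling: each copy of $B$ places one $\omega$-class in $W$ and the remaining $b-\omega=\sigma+(r-2)\omega$ vertices in $U$ (extra edges inside colour classes are harmless, since $B$ only needs to appear as a subgraph). Your claimed ratio of ``at least $\sigma/\omega$ of $U$-vertices per $W$-vertex'' is not where the slope comes from; in your graph the forced ratio is $(b-\omega)/\omega$, and the part sizes you chose meet it exactly. To destroy tileability by pure counting you would have to enlarge $W$ beyond $\omega n/b$, but then more than $\omega n/b$ vertices have degree $n-|W|<(1-\omega/b)n$, so the hypothesis $d_i\geq(1-\tfrac{\omega+\sigma}{b})n+\tfrac{\sigma}{\omega}i$ already fails at $i$ close to $\omega n/b$, outside the permitted window $\{k,\dots,k+2\eta n\}$. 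A second quantitative error: lowering the degrees of the low-indexed $W$-vertices down to the line requires deleting about $\sigma n/b-\tfrac{\sigma}{\omega}i$ edges at the $i$-th vertex, i.e.\ $\Theta(n^2)$ edges in total, not $O(\eta n^2)$, so ``the deletions are too few to matter'' cannot be the reason non-tileability survives; in fact whether it survives depends entirely on the \emph{structure} of the deletions, which you leave unspecified.

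The missing idea is the paper's nested neighbourhood (``staircase'') structure inside an $r$-partite frame. The paper takes classes $V_1,\dots,V_r$ with $|V_1|=\sigma n/b$ and $|V_2|=\dots=|V_r|=\omega n/b$, makes $V_1$ a clique joined to everything except $V_2$, makes $V_2,\dots,V_r$ completely multipartite, and joins $c_i\in V_2$ only to $a_1,\dots,a_{\lceil\sigma i/\omega\rceil}$ in $V_1$. Because the neighbourhoods are nested, the entire low-degree set $C=\{c_1,\dots,c_{k+2\eta n}\}$ has all of its $V_1$-neighbours inside a set $A$ of only $\lceil\sigma k/\omega\rceil$ vertices. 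A pigeonhole argument over the $r-2$ independent sets $V_3,\dots,V_r$ shows that any copy of $B$ meeting $C$ must place an entire colour class (hence at least $\sigma$ vertices) inside $A$, while covering at most $\omega$ vertices of $C$; so at most $\omega\lceil\sigma k/\omega\rceil/\sigma<k+\eta n/2$ vertices of $C$ are ever covered, leaving at least $3\eta n/2$ of $|C|=k+2\eta n$ uncovered. This local bottleneck ($A$ versus $C$), not a global $|U|$-versus-$|W|$ count, is what makes the slope $\sigma/\omega$ tight, and it has no analogue in a two-part clique-plus-independent-set construction.
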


\begin{proof}\textnormal{Let \(G\) be the graph on \(n\) vertices with \(r\) vertex classes \(V_1, \ldots, V_{r}\) where \(|V_1| = \sigma n/b\) and \(|V_2| = |V_3| = \ldots = |V_r| = \omega n/b\). Label the vertices of \(V_1\) as \(a_1, a_2, \ldots, a_{\sigma n/b}\). Similarly, label the vertices of \(V_2\) as \(c_1, c_2, \ldots, c_{\omega n/b}\). The edge set of \(G\) is constructed as follows.}

\textnormal{Firstly, let \(G\) have the following edges:}

\begin{itemize}
     \item \textnormal{All edges with an endpoint in \(V_1\) and the other endpoint in \(V(G)\setminus V_2\), in particular \(G[V_1]\) is complete;}
     \item \textnormal{All edges with an endpoint in \(V_i\) and the other endpoint in \(V(G)\setminus (V_1 \cup V_i) \) for \(2  \leq i \leq r\);}
     \item \textnormal{Given any \(1 \leq i \leq \omega n/b\) and \(j \leq \lceil\sigma i/\omega\rceil\) include all edges  \(c_i a_j\).}
\end{itemize}

\textnormal{
So at the moment $G$ does satisfy the degree sequence in Theorem~\ref{almostmain}; we therefore modify $G$ slightly.
For all \(k~+~1~\leq~i~\leq~k~+~2\eta n\) and \(\lceil\sigma k/\omega \rceil + 1\leq j \leq \lceil\sigma (k+2\eta n)/\omega\rceil\) delete each edge between \(c_i\) and \(a_j\).
One can easily check that \(G\) satisfies the degree sequence in the statement of the proposition. In particular, the vertices of degree \(\left(1 - \frac{\omega+\sigma}{b}\right)n + \lceil\frac{\sigma}{\omega}k\rceil  \) are \(c_{k},\dots,
c_{k+2\eta n}\).}

Define \(A := \{a_1, \ldots, a_{\lceil\sigma k/\omega\rceil}\}\) and \(C := \{c_1, \ldots, c_{k + 2\eta n}\}\). Note that there are no edges between $C$ and $V_1 \setminus A$ in $G$.

\begin{claim} \label{exexampclaim} Let \(T\) be a \(B\)-tiling of \(G\). Then \(T\) does not cover at least \(3\eta n/2\) vertices in \(C\).
\end{claim}
\textnormal{Consider any copy \(B'\) of \(B\) in \(G\) that contains an element of \(C\). 
As $C$ is an independent set in $G$,  \(B'\) contains at most \(\omega\) elements from \(C\).
Since
there are no edges between $C$ and $V_1 \setminus A$ in $G$,
 \(B'\) contains at least \(\sigma\) vertices in \(A\). 
This implies that at most \(\lceil\sigma k/\omega\rceil(\omega/\sigma) < k + \eta n/2\) vertices in \(C\) can be covered by \(T\). Since \(|C| = k + 2 \eta n\), we have that \(T\) does not cover at least \(3\eta n/2\) vertices in \(C\). Therefore, Claim \ref{exexampclaim} holds. Hence \(G\) does not have a \(B\)-tiling covering all but at most \(\eta n\) vertices.}\end{proof}

Proposition~\ref{exex1} implies that for bottle graphs $B$, the degree sequence in Theorem~\ref{almostmain} cannot be lowered significantly in a small part of the degree sequence and still ensure an almost perfect $B$-tiling; so the `slope' of the degree sequence
in Theorem~\ref{almostmain} cannot be improved upon. It would be interesting to find other classes of graphs $H$ for which the slope in Theorem~\ref{almostmain} is also best possible; we suspect though that there are  graphs $H$ where the slope is not best possible.

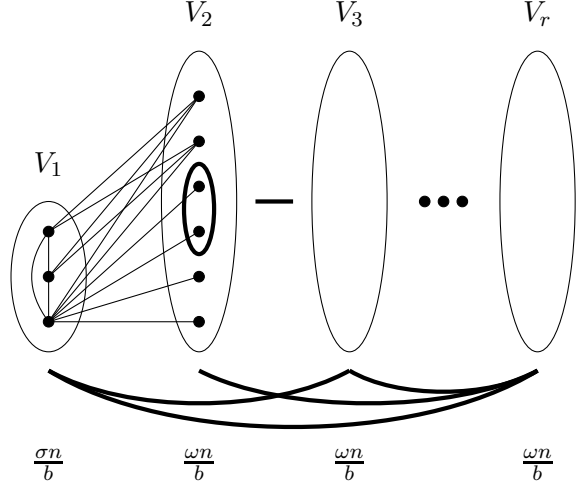
\begin{figure}
\begin{center}
\begin{tikzpicture}
\draw (-2,1) ellipse (0.5 and 1);
\draw (0,2) ellipse (0.5 and 2);
\draw (2,2) ellipse (0.5 and 2);
\filldraw[black] (3,2) circle (2pt);
\filldraw[black] (3.25,2) circle (2pt);
\filldraw[black] (3.5,2) circle (2pt);
\draw (4.5,2) ellipse (0.5 and 2);
\filldraw[black] (-2,0.4) circle (2pt);
\filldraw[black] (-2,1) circle (2pt);
\filldraw[black] (-2,1.6) circle (2pt);
\filldraw[black] (0,0.4) circle (2pt);
\filldraw[black] (0,1) circle (2pt);
\filldraw[black] (0,1.6) circle (2pt);
\filldraw[black] (0,2.2) circle (2pt);
\filldraw[black] (0,2.8) circle (2pt);
\filldraw[black] (0,3.4) circle (2pt);
\draw[black, ultra thick] (0,1.9) ellipse (0.2 and 0.6);
\draw (-2,0.4) -- (0,0.4);
\draw (-2,0.4) -- (0,1);
\draw (-2,0.4) -- (0,1.6);
\draw (-2,0.4) -- (0,2.2);
\draw (-2,0.4) -- (0,2.8);
\draw (-2,0.4) -- (0,3.4);
\draw (-2,1) -- (0,2.8);
\draw (-2,1) -- (0,3.4);
\draw (-2,1.6) -- (0,2.8);
\draw (-2,1.6) -- (0,3.4);
\node at (-2,-1.5) {\(\frac{\sigma n}{b}\)};
\node at (0,-1.5) {\(\frac{\omega n}{b}\)};
\node at (2,-1.5) {\(\frac{\omega n}{b}\)};
\node at (4.5,-1.5) {\(\frac{\omega n}{b}\)};
\node at (-2,2.5) {\(V_1\)};
\node at (0,4.5) {\(V_2\)};
\node at (2,4.5) {\(V_3\)};
\node at (4.5,4.5) {\(V_r\)};
\draw[ultra thick] (0.75,2) -- (1.25,2);
\draw[ultra thick] (-2,-0.25) .. controls (-0.95,-0.83) and (0.95,-0.83) .. (2,-0.25);
\draw[ultra thick] (0,-0.25) .. controls (1.125,-0.83) and (3.375,-0.83) .. (4.5,-0.25);
\draw[ultra thick] (2,-0.25) .. controls (2.625,-0.625) and (3.875,-0.625) .. (4.5,-0.25);
\draw[ultra thick] (-2,-0.25) .. controls (-0.375,-1.25) and (2.875,-1.25) .. (4.5,-0.25);
\draw (-2,0.4) -- (-2,1);
\draw (-2,1) -- (-2,1.6);
\draw (-2,0.4) .. controls (-2.3,0.8) and (-2.3,1.2) .. (-2,1.6);
\end{tikzpicture}

\caption{An example of a graph \(G\) in Proposition~\ref{exex1} where \(\sigma = 1, \omega = 2\).}
\end{center}
\end{figure}

\smallskip

\noindent
{\bf Extremal Example 2.}
The next example shows that for many graphs $H$, Theorem~\ref{almostmain} is best possible in the sense that we cannot start the degree sequence at a significantly lower value.

\begin{prop}
Let $H$ be an $r$-partite graph so that, for every $x \in V(H)$, $H[N (x)]$ is $(r-1)$-partite. Let $h:=|H|$, \(\sigma:=\sigma(H)\)  and set \(\omega := (h-\sigma)/(r-1)\). 
Additionally suppose $\sigma <\omega$.
Let $0<1/n \ll \eta \ll (\omega -\sigma )/h$ where $h$ divides $n$. Then there is an $n$-vertex graph $G$ with
\begin{itemize}
\item[(i)] $\lfloor \eta n \rfloor +1$ vertices of degree $(1-\frac{\omega +\sigma}{h})n$,
\item[(ii)] all other vertices have degree at least $(1-1/\chi _{cr} (H))n=(1-\omega/h)n$,
\end{itemize}
and  $G$ does not have an $H$-tiling covering all but at most $\eta n$ vertices.
\end{prop}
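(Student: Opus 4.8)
The plan is to construct a complete‑multipartite‑type host graph $G$ carrying a small \emph{independent} set $W$ of exactly $\lfloor \eta n\rfloor+1$ low‑degree vertices, and to show that \emph{no} copy of $H$ in $G$ can use a vertex of $W$. Since $|W| = \lfloor\eta n\rfloor + 1 > \eta n$, any $H$‑tiling of $G$ then leaves more than $\eta n$ vertices uncovered, giving the conclusion.

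For the construction, put $m := \lfloor\eta n\rfloor + 1$ and partition $V(G) = W \cup Z_1 \cup \dots \cup Z_{r-2} \cup Y_1 \cup Y_2$ with $|W| = m$, $|Z_i| = \omega n/h$ for each $1 \le i \le r-2$, $|Y_1| = \sigma n/h$ and $|Y_2| = \omega n/h - m$. These sizes sum to $(r-1)\omega n/h + \sigma n/h = n$, and $|Y_2| > 0$ for $n$ large since $\eta \ll (\omega-\sigma)/h < \omega/h$ (I suppress the harmless rounding required when $h \mid n$ does not make each summand an integer). Let the $r$ classes $Z_1, \dots, Z_{r-2}, Y_1, Y_2$ span a complete $r$‑partite graph, let $W$ be independent, join every vertex of $W$ to every vertex of $Z_1 \cup \dots \cup Z_{r-2}$, and add no edge between $W$ and $Y_1 \cup Y_2$; thus $N_G(w) = Z_1 \cup \dots \cup Z_{r-2}$ for every $w \in W$. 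A routine degree count then gives $d_G(w) = (r-2)\omega n/h = (1 - \tfrac{\omega+\sigma}{h})n$ for $w \in W$ — these are the $\lfloor\eta n\rfloor+1$ vertices of (i) — while every vertex of a $Z_i$ and every vertex of $Y_2$ has degree exactly $(1-\omega/h)n$, and every vertex of $Y_1$ has degree $(r-1)\omega n/h - m = (1-\sigma/h)n - m$. So (ii) reduces to $(1-\sigma/h)n - m \ge (1-\omega/h)n$, i.e. $m \le \tfrac{\omega-\sigma}{h}n$, which holds for $n$ large precisely because $\sigma < \omega$ and $\eta \ll (\omega-\sigma)/h$. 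This inequality is the one place the two hypotheses $\sigma < \omega$ and $\eta \ll (\omega-\sigma)/h$ are needed.

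It remains to verify that no copy of $H$ in $G$ contains a vertex of $W$. Suppose some injective edge‑preserving map $\phi \colon H \to G$ has $\phi(x_0) = w \in W$ for some $x_0 \in V(H)$. As $\phi$ preserves edges, $\phi(N_H(x_0)) \subseteq N_G(w) = Z_1 \cup \dots \cup Z_{r-2}$, and $G$ restricted to $Z_1 \cup \dots \cup Z_{r-2}$ is complete $(r-2)$‑partite; hence $H[N_H(x_0)]$ embeds, even just as a subgraph, into an $(r-2)$‑partite graph and is therefore $(r-2)$‑partite. This contradicts the hypothesis that $H[N(x_0)]$ is $(r-1)$‑partite: the neighbourhood subgraph of any vertex of the $r$‑chromatic graph $H$ is automatically $(r-1)$‑partite, so the hypothesis is precisely that $\chi(H[N(x_0)]) = r-1$, i.e. that $H[N(x_0)]$ is not $(r-2)$‑partite. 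Hence no $H$‑tiling of $G$ covers any vertex of $W$, and every $H$‑tiling misses at least $|W| = \lfloor\eta n\rfloor + 1 > \eta n$ vertices.

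Once the construction is fixed the argument is immediate, so the real work — and the main thing to get right — is the design and bookkeeping of $G$: choosing the sizes of $Y_1$, $Y_2$ and the $Z_i$ so that the $W$‑vertices sit at exactly degree $(1-\tfrac{\omega+\sigma}{h})n$ while every other vertex stays at or above $(1-\omega/h)n$, and checking that the resulting slack $\tfrac{\omega-\sigma}{h}n$ genuinely absorbs $m$. A secondary but essential point is the standard observation that a vertex‑neighbourhood subgraph in an $r$‑chromatic graph always has chromatic number at most $r-1$, which is what allows the hypothesis ``$H[N(x)]$ is $(r-1)$‑partite'' to yield the contradiction above.
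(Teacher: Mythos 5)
Your construction is isomorphic to the paper's (the paper takes the complete $r$-partite graph with classes of sizes $\sigma n/h+\lfloor\eta n\rfloor+1,\ \omega n/h-\lfloor\eta n\rfloor-1,\ \omega n/h,\dots,\omega n/h$ and deletes the edges between a set $V'$ of $\lfloor\eta n\rfloor+1$ vertices of the first class and the second class, which yields exactly your $W, Y_1, Y_2, Z_1,\dots,Z_{r-2}$), and the key argument — that $N_G(w)$ induces a complete $(r-2)$-partite graph, so no vertex of $W$ lies in a copy of $H$ — is the same. The proof is correct and takes essentially the same approach as the paper.
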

\begin{proof}
Let \(G\) be the complete $r$-partite graph on \(n\) vertices with  vertex classes \(V_1, \ldots, V_{r}\) where \(|V_1| = \sigma n/h+\lfloor \eta n \rfloor +1\), 
$|V_2|=\omega n/h -\lfloor \eta n \rfloor -1$ and
\( |V_3| = \ldots = |V_r| = \omega n/h\).
Let $V'\subseteq V_1$ be of size $\lfloor \eta n \rfloor +1$. Delete from $G$ all edges with one  endpoint in $V'$ and the other in $V_2$.
By construction $G$ satisfies (i) and (ii). Note that since the neighbourhood of any $x \in V'$ induces an $(r-2)$-partite subgraph of $G$, no vertex in $V'$ lies in a copy of $H$ in $G$.
So $G$ does not have an $H$-tiling covering all but at most $\eta n$ vertices.
\end{proof}

\noindent
{\bf Extremal Example 3.}
 \textnormal{Set \(\eta \in \mathbb{R}\) and \(x \in (0,1]\). Let \(H\) be a graph with \(\chi(H) =: r\). Let $h:=|H|$, \(\sigma := \sigma(H)\) and set \(\omega := (h-\sigma)/(r-1)\).
Define \(g_H(1) := 1 - \omega/h\).
 We give an extremal example \(G\) on \(n\) vertices which satisfies the degree sequence of Theorem~\ref{xdegseqKomlos} except that \((h-x\sigma)n/(r-1)h + \eta n\) vertices have degree at most \((g_H(x) - \eta)n\), but does not contain an \(H\)-tiling covering at least \((x -\eta)n\) vertices. }

\begin{prop}
Set \(\eta \in \mathbb{R}\) and \(x \in (0,1]\). Let \(H\) be a graph with \(\chi(H) =: r\). Let $h:=|H|$, \(\sigma := \sigma(H)\) and set \(\omega := (h-\sigma)/(r-1)\). Then there exists a graph \(G\) on \(n\) vertices whose degree sequence \(d_1 \leq \ldots \leq d_n\) satisfies 
$$d_i = \left(g_H(x)-\eta\right)n \ \ \text{for all } \ i \leq \frac{h-x\sigma}{(r-1)h}n + \eta n,$$
$$d_i \geq g_H(x) n \ \ \text{ for all  } \ i > \frac{h-x\sigma}{(r-1)h}n + \eta n,$$
but such that \(G\) does not contain an \(H\)-tiling covering at least \((x-\eta)n\) vertices.
\end{prop}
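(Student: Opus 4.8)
The plan is to take $G$ to be a suitably unbalanced complete $r$-partite graph, in the spirit of Koml\'os' own extremal construction for Theorem~\ref{kom} and of the earlier Extremal Examples~1 (Proposition~\ref{exex1}) and~2. The whole construction is driven by the elementary identity
\[
1-g_H(x)=\frac{h-x\sigma}{(r-1)h},
\]
which follows by substituting $\chi_{cr}(H)=(r-1)h/(h-\sigma)$ into the definition of $g_H$ and simplifying to $g_H(x)=\tfrac{(r-2)h+x\sigma}{(r-1)h}$; equivalently $(r-1)\bigl(1-g_H(x)\bigr)=1-\tfrac{x\sigma}{h}$. Thus in any complete multipartite graph on $n$ vertices, a vertex in a class of size $\bigl(\tfrac{h-x\sigma}{(r-1)h}+\eta\bigr)n$ has degree exactly $(g_H(x)-\eta)n$, while a vertex in a class of size at most $\tfrac{h-x\sigma}{(r-1)h}n$ has degree at least $g_H(x)n$.

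Accordingly I would let $G$ be the complete $r$-partite graph with vertex classes $B_0,B_1,B_2,\dots,B_{r-1}$ of sizes
\[
|B_0|=\Bigl(\tfrac{h-x\sigma}{(r-1)h}+\eta\Bigr)n,\qquad
|B_1|=\Bigl(\tfrac{x\sigma}{h}-\eta\Bigr)n,\qquad
|B_2|=\dots=|B_{r-1}|=\tfrac{h-x\sigma}{(r-1)h}\,n ,
\]
where we assume $\eta$ is small (in particular $0<\eta<x\sigma/h$, the boundary cases needing only cosmetic changes, e.g.\ replacing $B_1$ by $\emptyset$) and that the displayed quantities are integers, which we may by choosing $n$ suitably — in general one rounds, which is harmless since all the inequalities below are strict. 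The identity above gives $\sum_i|B_i|=n$. Reading off degrees: the $\bigl(\tfrac{h-x\sigma}{(r-1)h}+\eta\bigr)n$ vertices of $B_0$ have degree exactly $(g_H(x)-\eta)n$; the vertices of $B_2,\dots,B_{r-1}$ have degree exactly $g_H(x)n$; and the vertices of $B_1$ have degree $\bigl(1-\tfrac{x\sigma}{h}+\eta\bigr)n$, which is at least $g_H(x)n$ because $\tfrac{x\sigma}{h}\le\tfrac{h-x\sigma}{(r-1)h}$ — this last inequality being exactly the bound $h\ge r\sigma$, valid since every colour class in a proper $r$-colouring of $H$ has at least $\sigma=\sigma(H)$ vertices. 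Hence the $\bigl(\tfrac{h-x\sigma}{(r-1)h}+\eta\bigr)n$ smallest degrees of $G$ equal $(g_H(x)-\eta)n$ and all other degrees are at least $g_H(x)n$, as required.

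It remains to bound $H$-tilings of $G$. Since each class $B_i$ is independent, the vertex set of any copy of $H$ in $G$ is partitioned by $B_0,\dots,B_{r-1}$ into $r$ independent sets of $H$, i.e.\ into the colour classes of a proper $r$-colouring of $H$; in particular such a copy uses at least $\sigma$ vertices of $B_1$. Consequently any $H$-tiling of $G$ consisting of $k$ copies satisfies $k\sigma\le|B_1|=\bigl(\tfrac{x\sigma}{h}-\eta\bigr)n$, and therefore covers $kh\le\bigl(x-\tfrac{h\eta}{\sigma}\bigr)n<(x-\eta)n$ vertices, using $h>\sigma$. This establishes the proposition. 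I do not anticipate a real obstacle here: the content is entirely elementary, and the only genuine point is checking that the three prescribed class sizes are simultaneously admissible — positive, summing to $n$, and (apart from $B_0$) small enough to keep their classes' degrees at least $g_H(x)n$ — which is precisely where $h\ge r\sigma$ is used; everything else is arithmetic and the usual divisibility bookkeeping.
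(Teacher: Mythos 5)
Your construction is exactly the paper's (the same complete $r$-partite graph with one class of size $(\tfrac{x\sigma}{h}-\eta)n$, one of size $(\tfrac{h-x\sigma}{(r-1)h}+\eta)n$, and $r-2$ of size $\tfrac{h-x\sigma}{(r-1)h}n$), and the counting argument — each copy of $H$ must place a colour class of a proper $r$-colouring, hence at least $\sigma$ vertices, in the small class — is the same bound the paper uses. The proof is correct and follows essentially the same approach, with somewhat more explicit verification of the degree sequence than the paper provides.
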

\begin{proof} \textnormal{Let \(G\) be the complete $r$-partite graph on \(n\) vertices with vertex classes \(V_1, \ldots, V_r\) such that} 
\begin{itemize}
\item \(|V_1| = \frac{x\sigma n}{h} - \eta n\), 
\item \(|V_2| = \frac{(h-x\sigma)n}{(r-1)h} + \eta n\), 
\item \(|V_3| = \ldots = |V_r| = \frac{(h-x\sigma)n}{(r-1)h}\).
\end{itemize} 

\textnormal{Consider any \(H\)-tiling \(T\) of \(G\). Observe that \(T\) can contain at most \(xn/h - \eta n/\sigma\) copies of \(H\). Indeed, 
to attain this bound one requires that
 all colour classes of size \(\sigma\) in copies of \(H\) are placed into \(V_1\). Hence at most \(x(r~-~1)\omega n/h~-~(r-1)\omega\eta n/\sigma\) vertices are covered by \(T\) in \(V_2 \cup \ldots \cup V_r\). Thus at most \((x - \eta)n - (r-1)\omega\eta n/\sigma\) vertices are covered by \(T\). Hence \(G\) does not contain an \(H\)-tiling covering at least \((x - \eta)n\) vertices}
\end{proof}

\section{Deriving Theorem~\ref{almostmain} from a weaker result}\label{precede}

To prove Theorem~\ref{almostmain} we will first prove the following `error term' version.

\begin{thm} \label{almostmainerror}
Let \(\eta > 0\) and \(H\) be a graph with \(\chi(H) = r\). Let $h:=|H|$, \(\sigma:=\sigma(H)\) and set \(\omega := (h-\sigma)/(r-1)\). Then there exists an \(n_0 = n_0(\eta, H) \in \mathbb{N}\) such that the following holds: Suppose \(G\) is a graph on \(n\geq n_0\) vertices with degree sequence \(d_1 \leq d_2 \leq \ldots \leq d_n\) such that
\[d_i \geq \left(1 - \frac{\omega+\sigma}{h}\right)n + \frac{\sigma}{\omega}i + \eta n \ \ \mbox{for all \ \(1 \leq i \leq \frac{\omega n}{h}\).}\]
Then \(G\) has an \(H\)-tiling covering all but at most \(\eta n\) vertices.
\end{thm}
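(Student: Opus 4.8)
# Proof Proposal for Theorem~\ref{almostmainerror}

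\textbf{Overall strategy.} The plan is to follow the standard absorbing-free ``regularity + Komlós'' paradigm, but we must be careful that the degree \emph{sequence} hypothesis (rather than a uniform minimum degree) is preserved when we pass to the reduced graph. First I would apply Szemerédi's Regularity Lemma (in its degree-form) to $G$ with parameters $0 < 1/n \ll \varepsilon \ll d \ll \eta, 1/h$, obtaining a partition into clusters $V_1,\dots,V_k$ of equal size together with a reduced graph $R$ on $[k]$ whose edges correspond to $\varepsilon$-regular pairs of density at least $d$. The key first step is a counting lemma: because $G$ has at most $\omega n/h$ vertices of degree below $(1-\omega/h)n + \eta n$, and each such ``low-degree'' vertex has degree at least roughly $(1-(\omega+\sigma)/h)n$, one shows that $R$ inherits an analogous degree-sequence condition — namely $R$ has at most roughly $(\omega/h + o(1))k$ vertices of degree below $(1-\omega/h - o(1))k$, with the degrees of these exceptional clusters climbing along a line of slope $\sigma/\omega$. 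Here one uses that for a cluster $V_i$, the number of $j$ with $V_iV_j \in E(R)$ is at least (average degree into good clusters)/(cluster size) minus an error of order $(\varepsilon + d)k$; one has to average the individual degree bounds over the vertices of $V_i$ and discard the clusters that contain too many low-degree vertices of $G$.

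\textbf{Finding a fractional tiling in the reduced graph.} Next I would prove the purely ``weighted-graph'' heart of the matter: any graph $R$ on $k$ vertices satisfying this degree-sequence condition admits an almost-perfect \emph{fractional} $K_r$-tiling that moreover respects the ``bottle'' structure needed to embed $H$ — equivalently, $R$ contains an almost-spanning collection of vertex-disjoint copies of the $r$-partite bottle graph $B$ with neck $\sigma n/h$ and width $\omega n/h$ (suitably rescaled), or more precisely a $K_r$-tiling together with weights so that each $K_r$ can be blown up into a copy of $H(t)$-worth of vertices. This is exactly the combinatorial content of Komlós' theorem adapted to degree sequences; I would prove it by a greedy/extremal argument: repeatedly pull out a copy of $K_r$ using the highest-degree vertices (which form a set of density essentially $1 - \omega/h$ and so contain $K_r$'s by Turán/Erdős–Stone once we are in the dense regime), and show by an averaging/accounting argument — tracking how the ``budget'' $\sum$ of the slope contributions is consumed — that one can continue until all but $o(k)$ vertices are covered, with the bottle graphs' necks assigned to the low-degree clusters. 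The extremal examples of Section~\ref{Extremal} (in particular Proposition~\ref{exex1}) show this accounting is tight, so the argument has no slack to waste; getting the constants to line up is where the bulk of the work lies.

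\textbf{Blowing up and cleaning up.} Given the almost-perfect $B$-tiling (equivalently $K_r$-tiling with the right weights) in $R$, I would use the standard machinery — regularity restriction to pairs within each copy of $K_r$, a defect version of the Blow-up/Embedding lemma (or simply repeated application of the Key Lemma / Slicing Lemma for $\varepsilon$-regular pairs, as in Komlós' original proof, since $H$ is of fixed size we do not need the full Blow-up Lemma) — to embed vertex-disjoint copies of $H$ into each blown-up bottle graph, covering all but an $\varepsilon$-fraction of each cluster. Collecting the uncovered remainder inside clusters, the exceptional set $V_0$ from the regularity lemma, and the $o(k)$ uncovered clusters, the total number of uncovered vertices of $G$ is at most $(\varepsilon + d + o(1))n \le \eta n$, which is the desired conclusion.

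\textbf{Main obstacle.} The routine parts (regularity setup, blow-up/embedding) are standard. The genuine difficulty is the middle step: proving that the degree-sequence condition on $R$ forces an almost-perfect $B$-tiling with necks correctly allocated to low-degree vertices. One must show that the linear slope $\sigma/\omega$ in the hypothesis is \emph{exactly} what is needed so that the low-degree vertices, which can only play the role of ``neck'' vertices (size-$\sigma$ colour classes) in copies of $H$, are not too numerous relative to the high-degree vertices available to fill the ``width'' classes. This is essentially a defect/Hall-type balancing argument, and making it quantitatively tight — matching Proposition~\ref{exex1} — requires care. I expect to set it up as: partition $V(R)$ into the ``top'' part $U$ (vertices of degree $\ge (1-\omega/h-o(1))k$) and successive ``layers'' of the low-degree part according to the slope, show $U$ is large enough to be almost-perfectly $K_{r-1}$-tiled after absorbing the layers as extra apex vertices, and bound the leftover by the defect form of Komlós/Hajnal–Szemerédi.
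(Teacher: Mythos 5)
Your first and third steps (regularity set-up with inheritance of the degree sequence by the reduced graph, and the final embedding via the Key/Slicing lemmas) match the paper — these are Lemma~\ref{inheriteddegseqreduced} and Lemma~\ref{almosttilingRtoG} respectively — and your instinct to reduce everything to bottle graphs is also the paper's. The gap is in the middle step, and it is exactly the step you flag as ``where the bulk of the work lies'': you assert that the reduced graph $R$ itself admits an almost-perfect (weighted) $K_r$-tiling with necks allocated to low-degree clusters, to be obtained by greedily extracting copies of $K_r$ from the high-degree part together with an accounting/Hall-type balancing argument. This does not work as described. The obstruction, which the paper makes explicit, is that after extracting a maximum $B^*$-tiling $T$ the uncovered vertices of $R$ may consist \emph{entirely} of small-degree vertices (degree as low as $\bigl(1-\tfrac{\omega+\sigma}{h}\bigr)k$, which for small index is below even the Erd\H{o}s--Stone threshold $\bigl(1-\tfrac{1}{r-1}\bigr)k$), so no further copy of $K_r$ can be pulled out and no leftover vertex can be glued onto $T$; no greedy or averaging argument confined to $R$ can make further progress, and indeed the tiling the paper ultimately produces does not live in $R$ at all but in an iterated blow-up $R((mt)^z)$.

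The missing idea is the expand-or-swap dichotomy of Lemma~\ref{expandorswaplemma}: a maximum $B^*$-tiling $T$ covering at most $(1-\eta)$-proportion of the vertices admits either (a) an \emph{expanding set} of $\gamma k$ uncovered big-degree vertices, each with a neighbour in every wide class of a distinct copy of $B^*$ in $T$ — which, after blowing up by $mt$ and invoking Lemma~\ref{mtblowuplem}, yields a $B^*$-tiling covering a strictly larger \emph{proportion} of vertices — or (b) a \emph{swapping set} of $\gamma k$ uncovered small-degree vertices that can be exchanged with covered vertices of strictly larger degree-rank. Swaps do not enlarge the tiling, but they strictly increase the potential $\sum I(x)$ over uncovered $x$ by a fixed amount, and since this potential is bounded above, after at most $S$ swap-and-blow-up rounds an expanding set must appear; after at most $Q$ expansion rounds one reaches an almost-perfect $B^*$-tiling of $R((mt)^z)$. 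Your clean-up step then needs the corresponding modification: the clusters $V_i$ of $G$ must be subdivided into $(mt)^z$ parts (via Fact~\ref{slicinglemma}) so that $R((mt)^z)$ is again a reduced graph of $G$ before Lemma~\ref{almosttilingRtoG} is applied. Without the swap mechanism and the passage to blow-ups, your central claim about $R$ is unproved, and Proposition~\ref{exex1} shows there is no slack that would let a cruder argument through.
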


Theorem~\ref{almostmainerror} implies Theorem~\ref{almostmain}. Indeed, a simple argument (as in~\cite{Komlos}) allows us to remove the error terms.

\begin{proofff}
Set \(0< \tau \ll \eta, 1/h\) and let $n\in \mathbb N$ be sufficiently large. 
Suppose $G$ is an $n$-vertex graph as in the statement of Theorem~\ref{almostmain}.
Let \(A\) be a set of \(\tau n\) vertices and define \(G^{*}\) to be the graph with vertex set \(V(G) \cup A\) and edge set \(E(G^{*}) := E(G) \cup \{xy : x \in V(G) \cup A, y \in A, x \neq y\}\). Then \(G^{*}\) has degree sequence \(d_{G^{*}, 1} \leq d_{G^{*}, 2} \leq \ldots \leq d_{G^{*}, (1 + \tau)n}\) where 
\begin{align*} d_{G^{*},i} & \geq  \left(1 - \frac{\omega+\sigma}{h}\right)n + \frac{\sigma}{\omega}i + \tau n 
 \geq  \left(1 - \frac{\omega+\sigma}{h}\right)(1+\tau)n + \left(\frac{\sigma}{\omega}i+ \frac{\sigma\tau}{h}n\right) + \frac{\omega\tau}{h}n 
\end{align*} 
for all  \(1 \leq i \leq \frac{\omega n}{h}\) and
\begin{align*} 
d_{G^{*},i} \geq \left(1 - \frac{\omega+\sigma}{h}\right)(1+\tau)n + \frac{\sigma}{\omega}i + \frac{\omega \tau}{2h}(1 + \tau)n 
\end{align*}
for all $\frac{\omega n}{h} \leq   i \leq \frac{\omega (1+ \tau)n}{h}$.
By Theorem~\ref{almostmainerror} we have that \(G^{*}\) has an \(H\)-tiling \(T\) covering all but at most \(\frac{\omega \tau}{2h}(1+\tau)n\) vertices.

Now, remove every copy of \(H\) from \(T\) that contains a vertex in \(A\). Then we have removed at most \((h-1)\tau n\) vertices from \(V(G) \subset V(G^{*})\). Moreover, this implies that there exists an \(H\)-tiling in \(G\) covering all but at most \((h-1)\tau n + \frac{\omega \tau}{2h}(1+\tau) n\) vertices. Since \((h-1)\tau n + \frac{\omega \tau}{2h}(1+\tau) n < \eta n\), Theorem~\ref{almostmain} holds.\qed
\end{proofff}

\medskip

{\noindent \bf Outline of the proof of Theorem~\ref{almostmainerror}.}
The aim of the rest of the paper is to prove Theorem~\ref{almostmainerror}; we now outline the proof of this result. We first show that it suffices to prove Theorem~\ref{almostmainerror} in the case when $H=B$, a bottle graph with neck
$\sigma$ and width $\omega$ (where $\sigma <\omega$). In particular, Theorem~\ref{almostmainerror} is already known in the case when $H$ is a balanced $r$-partite graph~\cite{Tregs}.

We then employ a variant of an idea of Koml\'os~\cite{Komlos}. Roughly speaking the idea is as follows:
Let $B^*$ be a suitably large blown-up copy of $B$.
We apply the Regularity lemma (Lemma~\ref{degformreglemma}) to obtain a reduced graph $R$ of $G$. If $R$ contains an
almost perfect $B^*$-tiling then one can rather straightforwardly conclude 
 that $G$ contains an almost perfect $B$-tiling, as required (for this we apply Lemma~\ref{almosttilingRtoG}).
 Otherwise, suppose that the largest $B^*$-tiling  in $R$
covers precisely $d\leq (1-o(1))|R|$ vertices. We then show that, for some $t \in \mathbb N$, there is a  $B^*$-tiling in the blow-up $R(t)$ of $R$ covering substantially more than $dt$ vertices.
 Thus, crucially, the largest $B^*$-tiling in $R(t)$ covers a higher proportion of vertices than the largest $B^*$-tiling in $R$. By repeating this argument, we obtain a blow-up $R'$ of $R$ that contains an almost perfect $B^*$-tiling. We then
show that this implies  $G$ contains an almost perfect $B$-tiling, as desired. 

Other applications of this general method have been used in the past~\cite{cz, hlad, Tregs}. Note however, our approach  has different challenges.
Indeed, the process of moving from a $B^*$-tiling $\mathcal B$ in $R$ to a proportionally larger $B^*$-tiling in $R(t)$
is rather subtle. 
In particular, what we would like to do is conclude that one can find a tiling $\mathcal B_0$ (not necessarily of copies of $B^*$) in $R$ that covers a larger proportion of the vertices in $R$ \emph{and} when one takes a suitable blow-up $R(t)$ of $R$,
then $\mathcal B_0$ corresponds to a $B^*$-tiling in $R(t)$. However,
the vertices in $R$ that are uncovered by $\mathcal B$ could perhaps all be `small degree' vertices (i.e. they do not have degree as large as that in Theorem~\ref{komcor}). This is a barrier to finding such a special tiling $\mathcal B_0$.
(Intuitively, one can imagine that if one has large degree vertices outside of $\mathcal B$ then one can glue such vertices onto $\mathcal B$ in such a way to obtain our desired tiling $\mathcal B_0$.)
In this case, one has to (through perhaps many steps) modify $\mathcal B$ and then blow-up $R$ to obtain an intermediate blow-up $R(t')$ of $R$ such that (i)  there is a $B^*$-tiling $\mathcal B'$ in $R(t')$ that covers the same proportion of
vertices compared to the tiling  $\mathcal B$ in $R$ and (ii) many of the vertices in $R(t')$ uncovered by $\mathcal B'$ are now such that they can be `glued' onto $\mathcal B'$ to obtain our desired larger tiling $\mathcal B_0$.

Despite these technicalities the proof of Theorem~\ref{almostmainerror} is perhaps surprisingly short.
The main work of the proof is encoded in  Lemma~\ref{expandorswaplemma}, which ensures one can modify the tiling $\mathcal B$ as above.

\section{Szemer\'{e}di's Regularity lemma and auxiliary results}\label{szereg}

A key tool in the proof of Theorem~\ref{almostmainerror} is Szemer\'{e}di's Regularity lemma \cite{sze}. To state this lemma we will need the following notion of \(\varepsilon\)-regularity.

\begin{define}
\textnormal{Let \(G =(A,B)\) be a bipartite graph with vertex classes \(A\) and \(B\). We define the \textit{density} of \(G\) to be \[d_G(A,B) := \frac{e_G(A,B)}{|A||B|}.\] Set \(\varepsilon > 0\). We say that \(G\) is \textit{\(\varepsilon\)-regular} if for all \(X \subseteq A\) and \(Y \subseteq B\) with \(|X| > \varepsilon|A|\) and \(|Y| > \varepsilon|B|\) we have that \(|d_G(X,Y) - d_G(A,B)| < \varepsilon\).} 
\end{define}

\begin{lem} \textnormal{(Degree form of Szemer\'{e}di's Regularity lemma).} \label{degformreglemma} 
Let \(\varepsilon \in (0,1)\) and \(M' \in \mathbb{N}\). Then there exist natural numbers \(M\) and \(n_0\) such that for any graph \(G\) on \(n \geq n_0\) vertices and any \(d \in (0,1)\) there is a partition of the vertices of \(G\) into subsets \(V_0, V_1, \ldots, V_k\) and a spanning subgraph \(G'\) of \(G\) such that the following hold:

\begin{itemize}
    \item \(M' \leq k \leq M\);
    \item \(|V_0| \leq \varepsilon n\);
    \item \(|V_1| = \ldots = |V_k| =: q\);
    \item \(d_{G'}(x) > d_{G}(x) - (d+ \varepsilon)n\) for all \(x \in V(G)\);
    \item \(e(G'[V_i]) = 0\) for all \(i \geq 1\);
    \item For all \(1 \leq i,j \leq k\) with \(i \neq j\) the pair \((V_i, V_j)_{G'}\) is \(\varepsilon\)-regular and has density either \(0\) or at least \(d\).
\end{itemize}
\end{lem}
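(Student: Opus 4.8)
\medskip
\noindent\emph{Proof sketch.}
The plan is to deduce this ``degree form'' from the standard statement of Szemer\'edi's Regularity lemma~\cite{sze}, which asserts that for every $\varepsilon'>0$ and every $m\in\mathbb N$ there is an $M$ such that every sufficiently large graph admits a partition $W_0,W_1,\dots,W_\ell$ with $m\le\ell\le M$, $|W_0|\le\varepsilon' n$, $|W_1|=\dots=|W_\ell|=:q$, and with at most $\varepsilon'\ell^2$ of the pairs $(W_i,W_j)$ ($i\neq j$) irregular. First I would pick $\varepsilon'$ with $0<\varepsilon'\ll d,\varepsilon$, set $m:=\max\{2M',\lceil 1/\varepsilon'\rceil\}$, let $M$ be the constant that the standard lemma returns for the pair $(\varepsilon',m)$, and apply that lemma to our graph $G$ (taking $n$ sufficiently large) to obtain such a partition $W_0,\dots,W_\ell$. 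The required $V_0,\dots,V_k$ and spanning subgraph $G'$ will be produced by ``cleaning'' this partition and then discarding the manifestly unhelpful edges.

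The cleaning has three steps. In (i) I move into the exceptional set every cluster $W_i$ that forms an irregular pair with more than $\sqrt{\varepsilon'}\ell$ of the other clusters; since there are at most $\varepsilon'\ell^2$ irregular pairs in all, fewer than $2\sqrt{\varepsilon'}\ell$ clusters get removed. In (ii), for each surviving $\varepsilon'$-regular pair $(W_i,W_j)$ of density less than $d$, $\varepsilon'$-regularity forces all but at most $\varepsilon' q$ vertices $x\in W_i$ to satisfy $d_G(x,W_j)<(d+\varepsilon')q$; counting these failures over all $j$ shows that fewer than $\sqrt{\varepsilon'}q$ vertices of $W_i$ fail this bound for more than $\sqrt{\varepsilon'}\ell$ of the surviving clusters, and I move all such ``exceptional'' vertices into the exceptional set. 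In (iii) I trim every surviving cluster down to a common size, moving the surplus into the exceptional set, thereby arriving at equal-sized clusters $V_1,\dots,V_k$ and an exceptional set $V_0$ with $k\ge(1-2\sqrt{\varepsilon'})\ell\ge M'$ and $|V_0|\le\varepsilon' n+O(\sqrt{\varepsilon'}\,n)\le\varepsilon n$. Building a small density buffer into the thresholds used above (so that a pair kept at the end still has density at least $d$ after the trimming), I finally take $G'$ to be $G$ with the following edges deleted: all edges incident to $V_0$, all edges lying inside some $V_i$, all edges in the irregular pairs that remain, and all edges in pairs of density less than $d$.

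It then remains to verify the six conclusions, four of which are essentially built into the construction: $M'\le k\le M$ and $|V_0|\le\varepsilon n$ were arranged in (iii); $e(G'[V_i])=0$ holds by fiat; and every surviving pair $(V_i,V_j)_{G'}$ is $\varepsilon'$-regular, hence $\varepsilon$-regular (removing a $\sqrt{\varepsilon'}$-fraction of the vertices of a regular pair only mildly worsens its regularity parameter), of density either $0$ or, thanks to the buffer, at least $d$. \emph{The hard part is the per-vertex estimate} $d_{G'}(x)>d_G(x)-(d+\varepsilon)n$. Fix a surviving vertex $x\in V_i$. The edges of $G$ at $x$ that were deleted are of four kinds: at most $|V_0|\le\varepsilon' n+O(\sqrt{\varepsilon'}\,n)$ of them run to $V_0$; at most $q\le n/m\le\varepsilon' n$ lie inside $V_i$; at most $\sqrt{\varepsilon'}\ell\cdot q\le\sqrt{\varepsilon'}\,n$ run into clusters that are irregular with $V_i$ (using that $V_i$ survived step (i)); and the remainder run into clusters $V_j$ with $(V_i,V_j)$ of density below $d$ --- of which, by step (ii), at most $\sqrt{\varepsilon'}\ell$ are ``exceptional'' for $x$ (contributing at most $\sqrt{\varepsilon'}\,n$ edges), while each of the rest contributes fewer than $(d+\varepsilon')q$ edges, i.e.\ at most $(d+\varepsilon')q\cdot\ell\le(d+\varepsilon')n$ in total. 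Adding up the four contributions and using $\varepsilon'\ll d,\varepsilon$ gives strictly less than $(d+\varepsilon)n$, as needed. The moral I would emphasise is that steps (i) and (ii) of the cleaning are precisely what converts the standard lemma's ``only few irregular pairs in total'' into a guarantee holding at \emph{every} vertex: without them one cluster could touch many irregular pairs, or one vertex could have nearly all of its neighbourhood inside low-density pairs, so that its degree in $G'$ collapses.
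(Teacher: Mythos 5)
The paper does not prove this lemma: it is quoted as the standard ``degree form'' of Szemer\'edi's Regularity Lemma, so your derivation has to be judged on its own terms. Your overall strategy is the standard one, and the two cleaning steps (i) and (ii) --- discarding clusters that lie in many irregular pairs, and discarding from each cluster the vertices that are ``exceptional'' for many low-density pairs --- are exactly the right devices for converting the global bound on irregular pairs into a per-vertex degree estimate. The bookkeeping you give for the four sources of edge loss at a surviving vertex is correct.

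There are, however, two slips. The substantive one: you define $G'$ by deleting, among other things, \emph{all edges incident to $V_0$}. Then $d_{G'}(x)=0$ for every $x\in V_0$, and since $V_0$ may contain vertices of degree close to $n$ (your step (i) moves entire clusters into it), the conclusion $d_{G'}(x)>d_G(x)-(d+\varepsilon)n$ fails for such vertices; you only ever verify the bound for ``surviving'' $x\in V_i$. The fix is simply not to delete those edges: none of the lemma's conclusions constrains $G'$ inside $V_0$ or between $V_0$ and the clusters, so keeping them makes the bound trivial for $x\in V_0$ (there $d_{G'}(x)=d_G(x)$) and merely removes one term from your count for $x\in V_i$. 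The minor one: the lemma asks for $M$ and $n_0$ depending only on $\varepsilon$ and $M'$, with $d$ quantified afterwards, whereas your hierarchy $\varepsilon'\ll d,\varepsilon$ makes them depend on $d$. Inspecting your argument, $\varepsilon'\ll d$ is never actually used --- the regularity bound $d_G(x,W_j)<(d+\varepsilon')q$ for non-exceptional vertices, the density buffer, and the final sum being $<(d+\varepsilon)n$ all need only $\varepsilon'\ll\varepsilon$ --- so $d$ should simply be dropped from the hierarchy.
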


We call \(V_1, \ldots, V_{k}\) the \textit{clusters} of our partition, \(V_0\) the exceptional set and \(G'\) the \textit{pure graph}. We define the \textit{reduced graph} \(R\) of \(G\) with parameters \(\varepsilon\), \(d\) and \(M'\) to be the graph whose vertex set is \(V_1, \ldots, V_k\) and in which \(V_iV_j\) is an edge if and only if \((V_i, V_j)_{G'}\) is \(\varepsilon\)-regular with density at least \(d\). Note also that \(|R| = k\).

The proof of the next result is analogous to that of Lemma 5.2 in \cite{Tregs}. It states that the degree sequence of \(G\) in Theorem~\ref{almostmainerror} is `inherited' by its reduced graph \(R\). 

\begin{lem} \label{inheriteddegseqreduced}
Set \(M', n_0 \in \mathbb{N}\) and \(\varepsilon, d, \eta, b, \omega, \sigma\) to be positive constants such that \(1/n_0 \ll 1/M' \ll \varepsilon \ll d \ll \eta \ll 1/b\) and where \(\omega + \sigma \leq b\). Suppose \(G\) is a graph on \(n \geq n_0\) vertices with degree sequence \(d_1 \leq d_2 \leq \ldots \leq d_n\) such that
\begin{equation} 
\label{originaldegseqreduced} d_i \geq \frac{b-\omega-\sigma}{b}n + \frac{\sigma}{\omega}i + \eta n \ \ \mbox{for all \(1 \leq i \leq \frac{\omega n}{b}\).}
\end{equation}
Let \(R\) be the reduced graph of \(G\) with parameters \(\varepsilon\), \(d\) and \(M'\) and set \(k:= |R|\). Then \(R\) has degree sequence \(d_{R,1} \leq d_{R,2} \leq \ldots \leq d_{R,k}\) such that
\begin{equation} \label{inheriteddegseqreducedeq} 
d_{R,i} \geq \frac{b-\omega-\sigma}{b}k + \frac{\sigma}{\omega}i + \frac{\eta k}{2} \ \ \mbox{for all \(1 \leq i \leq \frac{\omega k}{b}\).}
\end{equation}
\end{lem}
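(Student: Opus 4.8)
The plan is to transfer the degree-sequence hypothesis \eqref{originaldegseqreduced} from $G$ to the reduced graph $R$ by a counting argument, following the template of Lemma 5.2 in \cite{Tregs}. The key point is that a cluster $V_j$ of $R$ cannot have small degree in $R$ unless most of its vertices had small degree in the pure graph $G'$, and hence (by the degree form of the Regularity lemma) small degree in $G$. Since \eqref{originaldegseqreduced} bounds how many vertices of $G$ can have small degree, this bounds how many clusters can have small degree in $R$, which is exactly what \eqref{inheriteddegseqreducedeq} encodes.

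\textbf{Setup and the core claim.} First I fix the relevant quantities: let $q = |V_i|$ be the common cluster size, so $kq \geq (1-\varepsilon)n$ and $kq \leq n$. For a cluster $V_j$, note that $d_{G'}(x, V_i) \leq q$ trivially, and if $V_jV_i \notin E(R)$ then the pair $(V_j, V_i)_{G'}$ has density $0$, so every $x \in V_j$ has $d_{G'}(x, V_i) = 0$. Consequently, for each $x \in V_j$,
\[
d_{G'}(x) \leq d_R(V_j)\, q + |V_0| \leq d_R(V_j)\, q + \varepsilon n.
\]
Combining this with the degree-form bound $d_{G'}(x) > d_G(x) - (d+\varepsilon)n$, we get $d_G(x) < d_R(V_j)\, q + (d + 2\varepsilon)n$ for \emph{every} $x \in V_j$. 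So a cluster of small $R$-degree consists entirely of vertices of small $G$-degree.

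\textbf{The counting argument.} Fix $1 \leq i \leq \omega k / b$ and suppose, for contradiction, that the $i$-th smallest $R$-degree $d_{R,i}$ violates \eqref{inheriteddegseqreducedeq}, i.e.\ there are at least $i$ clusters $V_j$ with $d_R(V_j) < \frac{b-\omega-\sigma}{b}k + \frac{\sigma}{\omega}i + \frac{\eta k}{2}$. By the display above, each such cluster contributes $q$ vertices of $G$ with
\[
d_G(x) < \left(\frac{b-\omega-\sigma}{b}k + \frac{\sigma}{\omega}i + \frac{\eta k}{2}\right) q + (d + 2\varepsilon)n.
\]
Using $kq \leq n$ on the first term, $iq \leq \frac{\omega}{b} n$ on the second, and $kq \leq n$ on the third, and absorbing $(d+2\varepsilon)n + \frac{\eta}{2} n$ into the error slack (here the hierarchy $\varepsilon \ll d \ll \eta \ll 1/b$ is used: $d + 2\varepsilon + \eta/2 < \eta$), one obtains
\[
d_G(x) < \frac{b-\omega-\sigma}{b}n + \frac{\sigma}{\omega}(iq) \cdot \tfrac{1}{q}\big|_{\text{as a count}} + \eta n,
\]
so all $iq$ of these vertices have $G$-degree strictly below the threshold in \eqref{originaldegseqreduced} evaluated at index $iq$. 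But $iq \leq \omega n / b$ (since $i \leq \omega k/b$ and $kq \leq n$... actually one needs $iq \le \omega n/b$, which follows from $i \le \omega k /b$ and $q \le n/k$), so \eqref{originaldegseqreduced} asserts $d_{iq} \geq \frac{b-\omega-\sigma}{b}n + \frac{\sigma}{\omega}(iq) + \eta n$, meaning \emph{fewer than} $iq$ vertices of $G$ can have degree below this value --- contradiction. Hence \eqref{inheriteddegseqreducedeq} holds.

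\textbf{Main obstacle.} The delicate part is bookkeeping the error terms precisely: one must check that the losses from $|V_0| \leq \varepsilon n$, from the pure-graph degree defect $(d+\varepsilon)n$, and from the slack $kq \le n$ versus $kq \ge (1-\varepsilon)n$ all fit inside the gap between the $\eta n$ in \eqref{originaldegseqreduced} and the $\eta k / 2$ demanded in \eqref{inheriteddegseqreducedeq} (so roughly only half the error budget survives the transfer, which is why the statement weakens $\eta$ to $\eta/2$). This is exactly where the hierarchy $1/M' \ll \varepsilon \ll d \ll \eta \ll 1/b$ is invoked. Everything else is routine, and since the claim is stated to be analogous to \cite[Lemma 5.2]{Tregs}, one can largely cite that argument; I would write out the inequality chain above in full to make the error allocation explicit.
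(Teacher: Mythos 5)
Your proposal is correct and is essentially the paper's argument run in the contrapositive direction: the paper fixes $i$, takes the union $S$ of the $i$ clusters of smallest $R$-degree (so $|S|=qi\leq \omega n/b$), finds in $S$ a vertex $x$ with $d_G(x)\geq d_{qi}$, and converts that via $d_R(V_j)\geq (d_{G'}(x)-|V_0|)/q$ and the hierarchy into the claimed bound on $d_{R,i}$, which is exactly your counting inequality read in reverse. The error bookkeeping you describe (losing $(d+2\varepsilon)n$ from the pure graph and $|V_0|$, absorbed into the gap between $\eta n$ and $\eta k/2\cdot q\leq \eta n/2$) matches the paper's computation.
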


\begin{proof}
Let \(V_1, \ldots, V_k\) be the clusters of \(G\) and \(V_0\) the exceptional set, and let \(G'\) be the pure graph of \(G\). Set \(q := |V_1| = \ldots = |V_k|\). Clearly we may assume \(d_{R}(V_1) \leq d_{R}(V_2) \leq \ldots \leq d_{R}(V_k)\). Now consider any \(i \leq \frac{\omega k}{b}\). Set \(S := \cup_{1\leq j \leq i} V_j\). Then \(|S| = qi \leq \frac{\omega qk}{b} \leq \frac{\omega n}{b}\). Thus by (\ref{originaldegseqreduced}) there exists a vertex \(x \in S\) such that 
\(d_{G}(x) \geq d_{qi} \geq \frac{b-\omega-\sigma}{b}n + \left(\frac{\sigma}{\omega}\right)qi + \eta n\). Suppose that \(x \in V_j\) where \(1 \leq j \leq i\). Since we have that \(kq \leq n\), Lemma~\ref{degformreglemma} implies that
\begin{align*} 
d_{R}(V_j) \geq \frac{d_{G'}(x) - |V_0|}{q} &\geq \frac{1}{q}\left(\frac{b-\omega-\sigma}{b}n + \left(\frac{\sigma}{\omega}\right)qi + \eta n - (d + 2\varepsilon)n)\right) \\ &\geq \frac{b-\omega-\sigma}{b}k + \frac{\sigma}{\omega}i + \frac{\eta k}{2}. 
\end{align*}
 Since \(d_{R,i} = d_{R}(V_i) \geq d_{R}(V_j)\) we have that (\ref{inheriteddegseqreducedeq}) holds. 
\end{proof}
We will also apply the following well-known fact.

\begin{fact} \label{slicinglemma}
Let \(0 < \varepsilon < \alpha\) and \(\varepsilon':= \max\{\varepsilon/\alpha, 2\varepsilon\}\). Let \((A,B)\) be an \(\varepsilon\)-regular pair of density \(d\). Suppose \(A' \subseteq A\) and \(B' \subseteq B\) where \(|A'| \geq \alpha |A|\) and \(|B'| \geq \alpha |B|\). Then \((A', B')\) is an \(\varepsilon'\)-regular pair with density \(d'\) where \(|d'-d| < \varepsilon\).
\end{fact}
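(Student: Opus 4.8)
The final statement is Fact~\ref{slicinglemma}, the standard slicing lemma for $\varepsilon$-regular pairs. Here is how I would prove it.

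\medskip

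\textbf{Approach.} The plan is a direct verification from the definition of $\varepsilon$-regularity, using the fact that a large subset of a regular pair inherits regularity with only a mild loss in the parameter. The two quantities to control are: (a) that the density of the restricted pair $(A',B')$ is close to $d$; and (b) that every sufficiently large pair of subsets of $A'$ and $B'$ has density within $\varepsilon'$ of $d_G(A',B')$.

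\medskip

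\textbf{Key steps.} First, for (a): since $|A'| \geq \alpha|A| > \varepsilon|A|$ and $|B'| \geq \alpha|B| > \varepsilon|B|$ (using $\varepsilon < \alpha$), the pair $(A',B')$ is itself an admissible test pair in the definition of $\varepsilon$-regularity of $(A,B)$. Hence $|d_G(A',B') - d| < \varepsilon$, which gives the claimed bound $|d'-d| < \varepsilon$ with $d' := d_G(A',B')$. Second, for (b): take any $X \subseteq A'$ and $Y \subseteq B'$ with $|X| > \varepsilon'|A'|$ and $|Y| > \varepsilon'|B'|$. Since $\varepsilon' \geq \varepsilon/\alpha$, we have $|X| > (\varepsilon/\alpha)|A'| \geq (\varepsilon/\alpha)\cdot\alpha|A| = \varepsilon|A|$, and similarly $|Y| > \varepsilon|B|$. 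So $X,Y$ form an admissible test pair for $(A,B)$, giving $|d_G(X,Y) - d| < \varepsilon$. Combining this with $|d_G(A',B') - d| < \varepsilon$ via the triangle inequality yields $|d_G(X,Y) - d_G(A',B')| < 2\varepsilon \leq \varepsilon'$, since $\varepsilon' \geq 2\varepsilon$. This verifies that $(A',B')$ is $\varepsilon'$-regular.

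\medskip

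\textbf{Main obstacle.} There is essentially no obstacle here — this is a routine unpacking of definitions, and the only thing to be slightly careful about is which of the two lower bounds defining $\varepsilon' = \max\{\varepsilon/\alpha, 2\varepsilon\}$ is used where: the $\varepsilon/\alpha$ term is exactly what is needed to pull test sets back up to the threshold $\varepsilon|A|$ in the original pair, while the $2\varepsilon$ term absorbs the two applications of the triangle inequality. Both are needed simultaneously, which is why the maximum appears.
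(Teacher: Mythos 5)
Your proof is correct and is the standard argument for the slicing lemma; the paper itself states this as a well-known fact without proof, so there is nothing to compare against. Both uses of the maximum in $\varepsilon' = \max\{\varepsilon/\alpha, 2\varepsilon\}$ are handled exactly as needed: the $\varepsilon/\alpha$ term to lift test sets above the threshold $\varepsilon|A|$, $\varepsilon|B|$ in the original pair, and the $2\varepsilon$ term to absorb the triangle inequality.
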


\begin{lem} \textnormal{(Key Lemma \cite{KomlosSimonovits})} \label{keylem}
Suppose that \(0 < \varepsilon < d\), that \(q, t \in \mathbb{N}\) and that \(R\) is a graph where \(V(R) = \{v_1, \ldots, v_k\}\). We construct a graph \(G\) as follows: Replace every vertex \(v_i \in V(R)\) by a set \(V_i\) of \(q\) vertices and replace each edge of \(R\) by an \(\varepsilon\)-regular pair of density at least \(d\). For each \(v_i \in V(R)\), let \(U_i\) denote the set of \(t\) vertices in \(R(t)\) corresponding to \(v_i\). Let \(H\) be a subgraph of \(R(t)\) with maximum degree \(\Delta\), and set \(h := |H|\). Set \(\delta := d - \varepsilon\) and \(\varepsilon_0 := \delta^{\Delta}/(2 + \Delta)\). If \(\varepsilon \leq \varepsilon_0\) and \(t-1 \leq \varepsilon_0q\) then there are at least $(\varepsilon_0 q)^h $ labelled copies of \(H\) in \(G\) so that if \(x \in V(H)\) lies in \(U_i\), then \(x\) is embedded into \(V_i\) in \(G\).
\end{lem}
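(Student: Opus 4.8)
The plan is the standard greedy embedding argument for the Key Lemma: embed the vertices of $H$ into $G$ one at a time, maintaining for each not-yet-embedded vertex a large set of admissible images. Label $V(H)=\{y_1,\dots,y_h\}$ and for each $j$ let $i(j)$ be the index with $y_j\in U_{i(j)}$; since the $U_i$ are independent in $R(t)$, whenever $y_jy_\ell\in E(H)$ we have $i(j)\ne i(\ell)$ and $v_{i(j)}v_{i(\ell)}\in E(R)$, so the pair $(V_{i(j)},V_{i(\ell)})$ is one of the $\varepsilon$-regular pairs of density at least $d$ used to build $G$. Recall $\delta=d-\varepsilon$ and $\varepsilon_0=\delta^{\Delta}/(2+\Delta)$. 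We process $y_1,\dots,y_h$ in order; just before processing $y_s$ we will have chosen distinct images $v_1,\dots,v_{s-1}$ with $v_\ell\in V_{i(\ell)}$ forming a partial embedding, and for each $j\ge s$ a candidate set $C_j\subseteq V_{i(j)}$, starting from $C_j=V_{i(j)}$.

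The invariant to maintain is: for every $j\ge s$, $|C_j|\ge \delta^{a_j}q$, where $a_j$ is the number of neighbours of $y_j$ among $y_1,\dots,y_{s-1}$; in particular $|C_j|\ge\delta^{\Delta}q=(2+\Delta)\varepsilon_0 q$ since $a_j\le\Delta$. To process $y_s$: for each not-yet-embedded neighbour $y_j$ of $y_s$ the invariant gives $|C_j|\ge(2+\Delta)\varepsilon_0 q>\varepsilon q$, so $\varepsilon$-regularity of $(V_{i(s)},V_{i(j)})$ implies that all but at most $\varepsilon q$ vertices $v\in V_{i(s)}$ satisfy $|N(v)\cap C_j|\ge(d-\varepsilon)|C_j|=\delta|C_j|$. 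Since $y_s$ has at most $\Delta$ neighbours, at most $\Delta\varepsilon q$ vertices of $C_s$ are ``bad'' for some not-yet-embedded neighbour; we additionally forbid the at most $t-1$ previously chosen images lying in $V_{i(s)}$ (at most $t-1$ since at most $t$ vertices of $H$ lie in $U_{i(s)}$). Because $|C_s|\ge(2+\Delta)\varepsilon_0 q$ while $\Delta\varepsilon q+(t-1)\le\Delta\varepsilon_0 q+\varepsilon_0 q=(\Delta+1)\varepsilon_0 q$, there remain at least $\varepsilon_0 q$ admissible choices for $v_s$.

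After choosing $v_s$, update $C_j\leftarrow C_j\cap N(v_s)$ for each not-yet-embedded neighbour $y_j$ of $y_s$; by the choice of $v_s$ this shrinks $|C_j|$ by a factor at least $\delta$, and since $a_j$ grows by exactly $1$, the invariant is preserved (and unchanged $C_j$, $a_j$ for non-neighbours). Iterating $s=1,\dots,h$ runs to completion, and at each of the $h$ steps there are at least $\varepsilon_0 q$ choices, so the number of partition-respecting labelled copies of $H$ in $G$ is at least $(\varepsilon_0 q)^h$. The only point needing care is the bookkeeping that keeps every candidate set above the $\varepsilon q$ threshold exactly when regularity is invoked; this is precisely why $\varepsilon_0$ carries the factor $1/(2+\Delta)$, leaving room both for the $\Delta\varepsilon q$ bad vertices and the $t-1$ forbidden ones, and it is routine once the invariant is in place.
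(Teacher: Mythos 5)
Your proof is correct: the invariant $|C_j|\ge\delta^{a_j}q\ge(2+\Delta)\varepsilon_0 q$ keeps every candidate set above the $\varepsilon q$ threshold when regularity is invoked, and the budget $(2+\Delta)\varepsilon_0 q-\Delta\varepsilon q-(t-1)\ge\varepsilon_0 q$ is exactly the point of the constant $2+\Delta$. The paper states this lemma without proof (citing Koml\'os--Simonovits), and your greedy embedding argument is precisely the standard proof of that cited result.
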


\section{Tools for proving Theorem~\ref{almostmainerror}}\label{Tools}

In this section we provide further tools that we will need to prove Theorem~\ref{almostmainerror}. The following lemma is a special case of Lemma 11 in \cite{Komlos} (which in turn is easily implied by the Key lemma above).

\begin{lem} \label{almosttilingRtoG}
Set \(0 < \beta < 1/2\) and let \(B\) be the bottle graph with neck \(\sigma\) and width \(\omega\). Set  \(d \in (0,1)\). Then there exists an \(\varepsilon' > 0\) such that for all \(\varepsilon \leq \varepsilon'\) the following holds for all 
$q \in \mathbb N$: Let \(G\) be a graph constructed from \(B\) by replacing every vertex of \(B\) by \(q\) vertices and replacing the edges of \(B\) with \(\varepsilon\)-regular pairs of density at least \(d\). Then \(G\) has a \(B\)-tiling covering all but at most a \(\beta\)-proportion of the vertices in \(G\).
\end{lem}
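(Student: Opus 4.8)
The plan is to derive Lemma~\ref{almosttilingRtoG} directly from the Key Lemma (Lemma~\ref{keylem}) by a standard greedy/iterative argument. First I would fix $\beta \in (0,1/2)$ and set $t$ to be a large constant so that the bottle graph $B$ embeds into an appropriate blow-up structure with room to spare; concretely, recall $B$ is $r$-partite with one $\sigma$-class and $(r-1)$ $\omega$-classes, so $|B| = b := \sigma + (r-1)\omega$, and $B$ has some fixed maximum degree $\Delta = \Delta(B)$. Set $\delta := d-\varepsilon$ and $\varepsilon_0 := \delta^\Delta/(2+\Delta)$ as in Lemma~\ref{keylem}, and choose $\varepsilon' \leq \varepsilon_0$. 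Since we want a single copy of $B$ at a time, the relevant value of $t$ is just $t=1$, so the condition $t - 1 \leq \varepsilon_0 q$ is automatic; the content is that each application of the Key Lemma produces at least $(\varepsilon_0 q')^{b}$ labelled copies of $B$ across the current sub-clusters of size $q'$.

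The core of the argument is an iteration. I would maintain, at each step, subsets $W_i \subseteq V_i$ of a common size $q'$ (the vertices not yet used by the partial $B$-tiling), starting from $W_i = V_i$, $q' = q$. As long as $q' > \beta q$ (equivalently, fewer than a $\beta$-proportion of each class has been covered — and since $B$ uses vertices from the classes in the fixed proportions $\sigma : \omega : \cdots : \omega$, it is cleanest to peel off copies of $B$ in a way that keeps the leftover sets of equal size, or more simply to just track the minimum leftover size), I apply Fact~\ref{slicinglemma}: provided $q' \geq \beta q > \varepsilon q / \beta$ roughly, each pair $(W_i, W_j)$ inherited from an $\varepsilon$-regular pair $(V_i,V_j)$ of density $\geq d$ remains $\varepsilon'$-regular with density $\geq d - \varepsilon \geq \delta$, where $\varepsilon' = \max\{\varepsilon/\beta, 2\varepsilon\}$, which can be taken below $\varepsilon_0$ by choosing $\varepsilon'$ small enough at the outset. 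Then the Key Lemma (applied to the graph on the $W_i$'s, with $H = B$, $t = 1$) yields at least one copy of $B$ inside $\bigcup_i W_i$ that respects the partition. Remove the (at most $b$) vertices of this copy from the relevant $W_i$'s and repeat.

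To keep the leftover sets balanced I would, in each round, actually remove a fixed batch: since each copy of $B$ uses $\sigma$ vertices from one class and $\omega$ from each of $r-1$ classes, after sufficiently many copies the classes drain at proportional rates; alternatively one can argue more crudely that as long as \emph{every} $W_i$ has size $> \beta q$ we can continue, and each removed copy decreases $\sum_i |W_i|$ by exactly $b$, so the process runs for at least $(1-\beta)qr/b \cdot (1 - o(1))$ rounds and terminates only once some $|W_i| \leq \beta q$; a short rebalancing remark handles the fact that all classes should be drained nearly evenly (one simply alternates which class plays the role of the $\sigma$-class, or notes that the extremal structure of $B$ itself — complete $r$-partite — lets us permute roles freely). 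At termination, the union of all copies of $B$ found is a $B$-tiling of $G$ covering all but at most $\beta q$ vertices in each class, hence all but at most a $\beta$-proportion of $V(G)$ overall.

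The main obstacle I anticipate is not any single hard step but rather the bookkeeping to ensure the leftover classes stay large \emph{and} balanced enough that Fact~\ref{slicinglemma} keeps applying with a uniform $\varepsilon'$ right up until the $\beta$-proportion threshold; this is why the hierarchy $\varepsilon' \leq \varepsilon_0$ and the slicing parameter $\alpha \approx \beta$ must be fixed up front, before $\varepsilon$ is chosen. Since Lemma~\ref{almosttilingRtoG} is explicitly stated to be a special case of Lemma~11 of~\cite{Komlos}, I would in fact be content to cite that lemma and merely indicate the above reduction in a sentence or two, as the excerpt's remark suggests.
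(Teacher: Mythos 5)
Your approach is correct and is exactly the standard derivation; note that the paper does not actually prove Lemma~\ref{almosttilingRtoG} at all, but simply cites Lemma~11 of Koml\'os, so your sketch fills in what the authors leave implicit. One clarification that simplifies your argument considerably: the graph $G$ in the lemma has $b=|B|$ clusters (one per \emph{vertex} of $B$), not $r$ clusters (one per colour class of $B$). Since you take $t=1$ and embed $H=B$ into $R(1)=B$ via the identity, every copy of $B$ produced by the Key Lemma (Lemma~\ref{keylem}) uses exactly one vertex from each of the $b$ clusters, so the leftover sets $W_i$ remain of equal size automatically after each removal; the entire discussion of draining classes "at proportional rates", rebalancing, and permuting which class plays the role of the $\sigma$-class is a non-issue and should be deleted, as it stems from conflating the $b$ clusters of $G$ with the $r$ classes of $B$. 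With that read, the iteration is clean: while the common size $q'$ of the $W_i$ exceeds $\beta q$, Fact~\ref{slicinglemma} with $\alpha=\beta$ keeps every pair $\max\{\varepsilon/\beta,2\varepsilon\}$-regular of density at least $d-\varepsilon$, the Key Lemma yields a transversal copy of $B$ inside $\bigcup_i W_i$, and the process halts only once $q'\leq \beta q$, leaving at most $\beta q b=\beta|G|$ vertices uncovered. Your quantifier order (fixing $\varepsilon'$ in terms of $\beta$, $d$ and $\Delta(B)$ before $\varepsilon$ and $q$) is the right one.
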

Given a bottle graph $B$, the next lemma ensures various blown-up copies of graphs contain perfect $B$-tilings.

\begin{lem} \label{mtblowuplem}
Set \(m \in \mathbb{N}\). Let \(B\) be an \(r\)-partite bottle graph with neck \(\sigma\) and width \(\omega\), where \(b := |B|\) and \(\sigma < \omega\). Define \(B'\) to be the \(r\)-partite bottle graph with neck \(\sigma\) and width \(\omega - 1\) and let \(B^{*} := B(m)\). Define \(t := (\omega - \sigma)b\). Then \(B(mt)\), \(B^{*}(mt)\), \(B'(mt)\) and \(K_r(mt)\) all have perfect \(B^{*}\)-tilings.
\end{lem}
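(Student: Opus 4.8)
The plan is to reduce everything to a single statement about perfect $B^*$-tilings of complete $r$-partite blow-ups and then verify the divisibility conditions. First I would record the trivial observation that $B^* = B(m)$, so $B(mt) = B^*(t)$ and more generally $B(mt)$, $B'(mt)$, $K_r(mt)$ are all $r$-partite graphs each of whose classes have size a multiple of $mt$ (for $K_r(mt)$ each class has size $mt$; for $B(mt)$ the classes have sizes $\sigma mt$ and $\omega mt$; for $B'(mt)$ the classes have sizes $\sigma mt$ and $(\omega-1)mt$). Since each of these graphs is a \emph{complete} $r$-partite graph (a blow-up of a complete $r$-partite graph is complete $r$-partite), it suffices to show: if $F$ is the complete $r$-partite graph with class sizes $a_1,\dots,a_r$, and $mt \mid a_j$ for every $j$, then $F$ has a perfect $B^*$-tiling, provided the $a_j$ are "balanced enough" that a fractional/greedy class assignment works.

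Here is the key point. A copy of $B^*$ uses $\sigma m$ vertices from one class (its "neck" class) and $\omega m$ vertices from each of the other $r-1$ classes. So to perfectly tile $F$ we must choose, for each class $V_j$ of $F$, a nonnegative integer $n_j$ = number of copies of $B^*$ whose neck lands in $V_j$, and then class $V_j$ must satisfy $\sigma m \cdot n_j + \omega m\sum_{i\neq j} n_i = a_j$. Writing $N := \sum_i n_i$ for the total number of copies, this rearranges to $a_j = \omega m N - (\omega-\sigma)m\, n_j$, i.e. $n_j = \dfrac{\omega m N - a_j}{(\omega-\sigma)m}$, and summing over $j$ forces $\sum_j a_j = (r\omega - (r-1)\sigma)m N \cdot \frac{?}{}$... more cleanly, $rN\omega m - (\omega-\sigma)mN = \sum a_j$ when the $n_j$ sum to $N$, giving $N = \dfrac{\sum_j a_j}{m\big(r\omega-(\omega-\sigma)\big)} = \dfrac{|F|}{mb}$ since $b = |B| = \sigma + (r-1)\omega = r\omega - (r-1)\omega + \sigma$... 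I would just note $b = \sigma + (r-1)\omega$ and check $r\omega - (\omega - \sigma) = \sigma + (r-1)\omega = b$, so $N = |F|/(mb)$, which is an integer because $mb \mid mt \mid a_j$ for all $j$ and hence $mb$ divides $\sum a_j$ (in fact $t = (\omega-\sigma)b$ so $mb \mid mt$). Then each $n_j = \dfrac{\omega N - a_j/m}{\omega-\sigma}$ must be checked to be a nonnegative integer. Integrality follows since $mt = m(\omega-\sigma)b$ divides $a_j$, so $(\omega-\sigma) \mid a_j/m$ and $(\omega-\sigma)\mid \omega N$ because $b \mid N \cdot(\text{something})$—here I would use $t=(\omega-\sigma)b \mid a_j/m$ directly to get $(\omega-\sigma)\mid a_j/m$ and $(\omega-\sigma)\mid N$ from $N = |F|/(mb)$ together with $b\mid$... the cleanest route is: $mb \mid a_j$ and $(\omega-\sigma)b = t$ with $mt\mid a_j$ gives both divisibilities, so $n_j \in \mathbb{Z}$.

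The remaining obstacle, and the one I expect to require the actual work, is \textbf{nonnegativity} of the $n_j$: we need $a_j \le \omega m N = \omega\,|F|/b$ for every class $j$, i.e. no class of $F$ is too large. For $K_r(mt)$ all classes equal $mt$ and $|F| = rmt$, so $a_j = mt \le \omega \cdot rmt/b$ iff $b \le r\omega$, which holds since $b = \sigma + (r-1)\omega \le r\omega$ (using $\sigma \le \omega$). For $B(mt)$: the big classes have size $\omega mt$ and $|F| = mtb$, so we need $\omega mt \le \omega mt b/b = \omega mt$—equality, fine; the neck class has size $\sigma mt \le \omega mt$, also fine; in fact $B(mt) = B^*(t)$ trivially has a perfect $B^*$-tiling by splitting each class of $B(mt)$ along the blow-up structure into $t$ blocks matching a copy of $B^*$, so no computation is needed there at all. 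For $B'(mt)$, the classes have sizes $\sigma mt$ and $(\omega-1)mt$, with $|F| = mt(\sigma+(r-1)(\omega-1)) = mt(b - (r-1))$; one checks $(\omega-1)mt \le \omega\, mt(b-(r-1))/b$, equivalently $b(\omega-1) \le \omega(b - r + 1)$, i.e. $-b \le \omega(1-r) + \omega = \omega(2-r)$... rearranging, $\omega(r-1) - b \le -\omega \cdot 0$... I would simply verify $b - (r-1) \ge \sigma$ (true) and that the neck size $\sigma mt$ and big-class size $(\omega-1)mt$ both lie in $[\,0,\ \omega mt(b-r+1)/b\,]$ by direct substitution, which reduces to the inequality $\omega - 1 \le \omega(b-r+1)/b$, i.e. $b(\omega-1)\le \omega b - \omega(r-1)$, i.e. $\omega(r-1) \le b$, which holds since $b = \sigma + (r-1)\omega \ge (r-1)\omega$. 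Once all $n_j$ are shown to be nonnegative integers summing to $N$, I assign class roles accordingly and greedily build the tiling: order the classes, and repeatedly pick one copy of $B^*$ with neck in whichever class still "owes" a neck, drawing its $\omega m$ vertices from each other class; the arithmetic guarantees every class is exactly exhausted. This completes the proof; the only genuinely non-formal step is the bookkeeping that each of the four blown-up graphs satisfies the balancedness inequality $\max_j a_j \le \omega |F|/b$, which I have sketched above and which in every case reduces to $\sigma \le \omega$ (our standing hypothesis).
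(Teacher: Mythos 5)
Your argument is correct, but it takes a genuinely different route from the paper. The paper handles the four graphs by explicit ad hoc constructions: $B(mt)$ and $B^{*}(mt)$ are dismissed as blow-ups of $B^{*}$; for $K_r(mt)$ it places $(\omega-\sigma)r$ copies of $B^{*}$ with their necks rotated evenly through the $r$ classes; and for $B'(mt)$ it first packs $(\omega-1-\sigma)b$ copies with all necks in the small class and then observes that the leftover vertices form a balanced $K_r(\sigma mb)$, which is tiled by the $K_r$ argument. You instead set up the problem uniformly: since every graph in question is complete $r$-partite with all class sizes divisible by $mt$, a perfect $B^{*}$-tiling exists if and only if the linear system $a_j=\omega mN-(\omega-\sigma)mn_j$ has a nonnegative integer solution, and you verify integrality from $mt=m(\omega-\sigma)b\mid a_j$ and nonnegativity from the single inequality $\max_j a_j\le \omega|F|/b$. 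Your computations check out (for $B'(mt)$ one gets $n_j=\sigma$ for each wide class and $n_1=(\omega-\sigma)b-\omega(r-1)-\sigma b+\sigma b = \omega(b-r+1)-\sigma b\ge\sigma>0$ for the neck class, consistent with the paper's explicit counts), and the final "greedy" assembly is valid precisely because the host graphs are complete multipartite, so any partition of the classes into pieces of the right sizes yields genuine copies of $B^{*}$. What your approach buys is generality — it characterises exactly which complete $r$-partite blow-ups admit perfect $B^{*}$-tilings — at the cost of the case-by-case nonnegativity bookkeeping; the paper's constructions are shorter but opaque about why the arithmetic works. Two small points: you never explicitly dispose of $B^{*}(mt)$ (trivial, as it is a blow-up of $B^{*}$, and also covered by your framework), and your closing remark that every case "reduces to $\sigma\le\omega$" is slightly off — the $B'(mt)$ case reduces to $\omega(r-1)\le b$, which follows from $\sigma\ge 1$ rather than from $\sigma\le\omega$; but you verify the correct inequality in the body, so this is only a matter of phrasing.
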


\begin{proof}
Clearly \(B(mt)\) and \(B^{*}(mt)\) both have perfect \(B^{*}\)-tilings. It remains to show that \(B'(mt)\) and \(K_r(mt)\) have perfect \(B^{*}\)-tilings. 

For \(K_r(mt)\), tile \((\omega - \sigma)r\) copies of \(B^{*}\) into \(K_r(mt)\) such that their \((\sigma m)\)-classes are distributed evenly amongst the \(r\) vertex classes of \(K_r(mt)\). Indeed, we can view this as tiling \((\omega - \sigma)\) collections of \(r\) copies of \(B^{*}\) into \(K_r(mt)\) such that, for each collection \(C\), each vertex class of \(K_r(mt)\) contains the (\(\sigma m\))-class of precisely one copy of \(B^{*}\) in \(C\). 

For \(B'(mt)\), firstly tile \((\omega - 1 - \sigma)b\) vertex-disjoint copies of \(B^{*}\) into \(B'(mt)\) such that each (\(\sigma m\))-class is placed into the (\(\sigma mt\))-class in \(B'(mt)\). So our current \(B^{*}\)-tiling covers all but \(\sigma mt - \sigma m(\omega - 1 - \sigma)b = \sigma mb\) vertices in the (\(\sigma mt\))-class in \(B'(mt)\) and all but \((\omega - 1)mt - \omega m(\omega - 1 - \sigma)b = \sigma mb\) vertices in each (\(\omega mt\))-class in \(B'(mt)\). Then the remaining vertices to be covered in \(B'(mt)\) form a \(K_r(\sigma mb)\) which can be tiled with \(\sigma r\) copies of \(B^{*}\).
\end{proof}
The next result states that the degree sequence of \(G\) in Theorem~\ref{almostmainerror} is inherited by any blown-up copy of \(G\).

\begin{prop}\label{inheriteddegseqG(s)} 
Let \(n, s \in \mathbb{N}\) and \(b, \omega, \sigma > 0\) such that \(\omega n > b\) and \(\omega + \sigma \leq b\). Set \(\eta > 0\). Suppose \(G\) is a graph on \(n\) vertices with degree sequence \(d_{G,1} \leq d_{G,2} \leq \ldots \leq d_{G,n}\) such that 
\[d_{G,i} \geq \frac{b-\omega-\sigma}{b}n + \frac{\sigma}{\omega}i + \eta n \ \ \mbox{for all \(1 \leq i \leq \frac{\omega n}{b}\).}\]
 Then \(\bar{G} := G(s)\) has degree sequence \(d_{\bar{G}, 1} \leq d_{\bar{G}, 2} \leq \ldots \leq d_{\bar{G}, ns}\) such that
\[d_{\bar{G}, i} \geq \frac{b-\omega-\sigma}{b}ns + \frac{\sigma}{\omega}i + \left(\eta n - \frac{\sigma}{\omega}\right)s \ \ \mbox{for all \(1 \leq i \leq \frac{\omega ns}{b}\).}\]
\end{prop}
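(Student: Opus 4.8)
Proposition~\ref{inheriteddegseqG(s)} is a purely combinatorial statement about how a degree sequence transforms under taking a blow-up, so the plan is to track what happens to the $i$-th smallest degree when each vertex is replaced by $s$ copies. First I would fix the ordering: write $v_1, v_2, \dots, v_n$ for the vertices of $G$ with $d_G(v_1) \leq d_G(v_2) \leq \dots \leq d_G(v_n)$, and in $\bar G = G(s)$ replace each $v_j$ by the class $V_j$ of $s$ vertices. Every vertex in $V_j$ has degree exactly $s\, d_G(v_j)$ in $\bar G$ (it is joined to all $s$ copies of each neighbour of $v_j$, and to none of its own class). Hence the sorted degree sequence of $\bar G$ is obtained by listing $s\,d_G(v_1)$ with multiplicity $s$, then $s\,d_G(v_2)$ with multiplicity $s$, and so on; consequently $d_{\bar G, i} = s\, d_G(v_{\lceil i/s \rceil})$, i.e. $d_{\bar G, i} = s\, d_{G, \lceil i/s \rceil}$.

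Next I would substitute the hypothesis on $d_{G,\cdot}$ into this identity. Given $1 \leq i \leq \omega ns / b$, set $j := \lceil i/s \rceil$; then $j \leq \omega n/b$ (using $\omega n > b$ to absorb the rounding, or just noting $j \le i/s + 1 \le \omega n/b$ after a trivial check), so the hypothesis gives
\[
d_{\bar G, i} = s\, d_{G, j} \geq s\left( \frac{b-\omega-\sigma}{b} n + \frac{\sigma}{\omega} j + \eta n \right) = \frac{b-\omega-\sigma}{b} ns + \frac{\sigma}{\omega} (js) + \eta n s.
\]
Now I would handle the rounding in the middle term: since $j = \lceil i/s \rceil \geq i/s$, we have $js \geq i$, but to match the claimed bound exactly I instead use $j \geq i/s$ in the form $js \ge i$, and note this already gives $\frac{\sigma}{\omega}(js) \ge \frac{\sigma}{\omega} i$; if one wants the stated slightly weaker form one can simply write $js \ge i$ and carry $\eta ns$ through, which dominates the lost $\frac{\sigma}{\omega}s$ term anyway. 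More carefully, to land on the displayed inequality verbatim, use $j > i/s - 1$, hence $js > i - s$, so $\frac{\sigma}{\omega}(js) > \frac{\sigma}{\omega} i - \frac{\sigma}{\omega} s$, and therefore
\[
d_{\bar G, i} \geq \frac{b-\omega-\sigma}{b} ns + \frac{\sigma}{\omega} i - \frac{\sigma}{\omega} s + \eta n s = \frac{b-\omega-\sigma}{b} ns + \frac{\sigma}{\omega} i + \left( \eta n - \frac{\sigma}{\omega} \right) s,
\]
which is exactly the claim.

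There is essentially no substantive obstacle here: the only thing to be careful about is the ceiling $\lceil i/s \rceil$ and whether it can exceed $\omega n / b$ for $i$ in the allowed range. I would check this at the boundary $i = \omega ns/b$: then $i/s = \omega n /b$, which is an integer precisely when $b \mid \omega n$; in general $\lceil i/s\rceil \le \lceil \omega n/b \rceil$, and since the hypothesis is only assumed up to $\omega n/b$ one should either note that the degree sequence is non-decreasing so one may extend the bound, or simply restrict attention to $i \le \lfloor \omega n/b\rfloor s$ as is implicitly done elsewhere in the paper. The condition $\omega n > b$ in the statement is exactly what guarantees the allowed range is nonempty and that this rounding issue is harmless. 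So the proof is a one-line identity $d_{\bar G,i} = s\,d_{G,\lceil i/s\rceil}$ followed by direct substitution and a trivial estimate on the ceiling.
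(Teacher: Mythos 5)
Your proposal is correct and follows essentially the same route as the paper: both rest on the identity $d_{\bar G,i}=s\,d_{G,\lceil i/s\rceil}$ followed by substitution, with the $\bigl(\eta n-\tfrac{\sigma}{\omega}\bigr)s$ term absorbing the rounding. The only cosmetic difference is that the paper first proves the clean bound for $j\le \tfrac{\omega ns}{b}-s$ (so that $\lceil j/s\rceil\le \tfrac{\omega n}{b}$ automatically) and then extends to all $i\le\tfrac{\omega ns}{b}$ by monotonicity of the degree sequence, which is exactly the fix you describe for the boundary case in your final paragraph.
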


\begin{proof}
For any \(1 \leq j \leq ns\) we see that 
$$d_{\bar{G},j} = s \cdot d_{G,\lceil j/s\rceil}.$$ 
Suppose that \(j \leq \frac{\omega ns}{b} - s\). Then \(\lceil j/s\rceil \leq \frac{\omega n}{b}\) and we have 
$$d_{\bar{G},j} \geq \frac{b-\omega-\sigma}{b}ns + \frac{\sigma}{\omega}\lceil j/s\rceil s  + \eta ns \geq \frac{b-\omega-\sigma}{b}ns + \frac{\sigma}{\omega}j  + \eta ns.$$
 In particular, if we take any \(i \leq \frac{\omega ns}{b}\) we have 
$$d_{\bar{G}, i} \geq \frac{b-\omega-\sigma}{b}ns + \frac{\sigma}{\omega}(i-s) + \eta ns = \frac{b-\omega-\sigma}{b}ns + \frac{\sigma}{\omega}i + \left(\eta n - \frac{\sigma}{\omega}\right)s.$$
\end{proof}

The following result acts as a springboard from which to begin the proof of Lemma~\ref{expandorswaplemma}. 

\begin{prop} \label{findingacopy}
Set \(\eta > 0\) and \(m \in \mathbb{N}\), and let \(B\) be an \(r\)-partite bottle graph with neck \(\sigma\) and width \(\omega\), where \(b := |B|\). Define \(B^{*} := B(m)\). Then there exists \(n_0 \in \mathbb{N}\) such that the following holds. Suppose \(G\) is a graph on \(n \geq n_0\) vertices with degree sequence \(d_1 \leq d_2 \leq \ldots \leq d_n\) where 
\[d_i \geq \frac{b-\omega-\sigma}{b}n + \frac{\sigma}{\omega}i + \eta n \ \ \mbox{for all \(1 \leq i \leq \frac{\omega n}{b}\).}\]


\noindent Then there exists a copy of \(B^{*}\) in \(G\).
\end{prop}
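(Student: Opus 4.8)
The plan is to show that under this degree sequence condition, $G$ contains a copy of the bottle graph $B$ itself (on $b = \sigma + (r-1)\omega$ vertices), since once we have a single copy of $B$ we can greedily "blow it up" inside $G$ using the large degrees — but actually it is cleaner to directly build $B^* = B(m)$ by finding, for each of the $r$ colour classes of $B^*$, a suitable independent-sized clique-like structure and using the minimum degree to connect the classes. More concretely, the idea is to locate $r$ disjoint vertex subsets $W_1, \dots, W_r$ of $G$ with $|W_1| = \sigma m$ and $|W_2| = \cdots = |W_r| = \omega m$ so that $G$ contains the complete $r$-partite graph between them; this copy of the complete $r$-partite graph $K_{\sigma m, \omega m, \dots, \omega m}$ is exactly a copy of $B^*$.

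The key steps I would carry out are as follows. First, observe that the degree condition in particular gives $d_1 \geq \left(1 - \tfrac{\omega+\sigma}{b}\right)n + \tfrac{\sigma}{\omega} + \eta n$, so \emph{every} vertex of $G$ has degree at least $\left(1 - \tfrac{\omega+\sigma}{b}\right)n + \eta n \geq \left(1 - \tfrac{r-1}{r}\right)n$ roughly (using $\omega+\sigma \le b$, i.e.\ this is at least $n/b$-ish) — the cleanest bound is that $\delta(G) \ge \left(1 - \tfrac{\omega+\sigma}{b}\right)n$, which one checks exceeds $\left(1 - \tfrac{1}{r-1}\right)n$ is \emph{not} automatic, so instead I will use the slightly stronger consequence of the full degree sequence. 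Second, I would apply the Erd\H{o}s--Stone theorem (or a greedy embedding argument): a minimum degree of $\left(1 - \tfrac{1}{r-1} + o(1)\right)n$ forces any fixed $(r-1)$-partite graph, hence in particular $K_{r-1}(\omega m)$. The difficulty is that $\delta(G)$ may fall \emph{short} of the Erd\H{o}s--Stone threshold for $K_r(\cdot)$ — indeed the whole point of the bottle graph is that the smallest class has size only $\sigma < \omega$. So the argument must be: use the low-degree vertices (those indexed $i \le \omega n / b$) only for the small neck class, and use the bulk of the vertices, which have degree close to $\left(1 - \tfrac{\omega}{b}\right)n = (1 - 1/\chi_{cr}(B))n$, for the wide classes.

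Thus the real plan: Step (a) Among the vertices of index $i$ close to $\omega n / b$, the degree is $\ge \left(1 - \tfrac{\omega+\sigma}{b}\right)n + \sigma + \eta n - o(n)$, and for $i > \omega n/b$ we may only assume $d_i \ge d_{\omega n/b}$, i.e.\ $d_i \ge \left(1 - \tfrac{\omega}{b}\right)n + \eta n$. So all but at most $\omega n / b$ vertices have degree $\ge \left(1 - \tfrac{\omega}{b}\right)n + \eta n \ge \left(1 - \tfrac{1}{\chi_{cr}(B)}\right)n + \eta n$. Step (b) Restrict to this large set $U$ of $\ge (1 - \omega/b)n$ vertices of high degree; within $G[U]$ one still has minimum degree at least $\left(1 - \tfrac{\omega}{b}\right)n + \eta n - \omega n/b$... one needs to be careful here, but the relevant clean statement is that every vertex of $U$ has $\ge \eta n / 2$ neighbours in any linear-sized subset. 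Step (c) Greedily build the $r-1$ wide classes: since $\chi_{cr}$-type minimum degree forces $K_{r-1}$-blow-ups (this is a standard consequence, and in fact $1 - \omega/b \ge 1 - 1/(r-1)$ exactly when $\omega \le b/(r-1)$, which holds since $b = \sigma + (r-1)\omega > (r-1)\omega$), embed $K_{r-1}(\omega m)$ into $G[U]$. Step (d) Finally pick the neck: we need $\sigma m$ further vertices each adjacent to all $(r-1)\omega m$ already chosen. Every vertex of $G$ has $\ge \eta n$ common... no — we need vertices whose non-neighbourhood among the chosen $(r-1)\omega m = o(n)$ vertices is empty; since each chosen vertex has $\le \omega n/b + $ (non-neighbours), a counting/deletion argument shows all but $o(n)$ vertices of $G$ are adjacent to all of the chosen set, so we can find the $\sigma m$ neck vertices. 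I expect Step (c)–(d), in particular making the threshold arithmetic ($1 - \omega/b$ versus $1 - 1/(r-1)$) line up and handling the neck class with the weak-degree vertices, to be the main obstacle; the Erd\H{o}s--Stone / greedy-embedding input is routine but the bookkeeping about which vertices are "usable" for which colour class is where care is needed.
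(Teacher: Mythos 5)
Your overall strategy --- embed the $r-1$ wide classes first among the high-degree vertices and then locate the neck among their common neighbours --- has a genuine gap at Step (d), and it is not merely a bookkeeping issue. Each vertex you place in a wide class is only guaranteed degree about $(1-\omega/b)n$, i.e.\ it may have up to $\omega n/b$ non-neighbours, and you have placed $(r-1)\omega m$ such vertices. The union of their non-neighbourhoods can therefore have size up to $(r-1)\omega m\cdot\omega n/b$, which exceeds $n$ for every $m\ge 1$ (e.g.\ $r=3$, $\sigma=1$, $\omega=2$, $m=1$ gives $4\cdot 2n/5>n$). So the claim that ``all but $o(n)$ vertices of $G$ are adjacent to all of the chosen set'' is false: an arbitrary copy of $K_{r-1}(\omega m)$ produced by Erd\H{o}s--Stone may have empty common neighbourhood, and then no neck can be found. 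The same difficulty already appears inside Step (c), since vertices destined for different wide classes must be chosen inside each other's neighbourhoods, not independently.

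The missing idea is that the branch vertices must be grown one at a time inside nested common neighbourhoods, and the degree-sequence hypothesis is calibrated exactly for this: any set of size at least $\omega n/b$ contains a vertex of degree at least $(1-\omega/b)n+\eta n$, and the common neighbourhood of $i-1$ such pairwise adjacent vertices has size at least $n-(i-1)\omega n/b+(i-1)\eta n$, which stays above $\omega n/b$ while $i-1\le r-2$ (so the greedy choice can continue) and is still at least $\sigma n/b>0$ when $i-1=r-1$. This produces a copy of $K_r$, indeed $\Omega(n^r)$ copies. The paper runs precisely this greedy argument, but in the reduced graph $R$ given by the Regularity Lemma (after transferring the degree sequence to $R$ via Lemma~\ref{inheriteddegseqreduced}), and then converts the resulting $K_r\subseteq R$ into a copy of $B^*$ in $G$ via the Key Lemma (Lemma~\ref{keylem}). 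If you wish to avoid regularity, you would instead need to upgrade the $\Omega(n^r)$ copies of $K_r$ in $G$ to a copy of $K_r(\omega m)\supseteq B^*$ by supersaturation; either way, the step that cannot be skipped is selecting all $r$ classes from successive common neighbourhoods rather than fixing the wide classes first and hoping the neck fits afterwards.
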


\begin{proof}
Set \(\Delta := \Delta (B^{*})\). Let \(n\) be sufficiently large and define constants \(\varepsilon, d > 0\) and \(M' \in \mathbb{N}\) such that \(0 < 1/n \ll 1/M' \ll \varepsilon \ll d \ll 1/b, \eta, 1/\Delta\). 
Let $G$ be an $n$-vertex graph as in the statement of the proposition.
Applying Lemma~\ref{degformreglemma} with parameters \(\varepsilon\), \(d\) and \(M'\) to \(G\), we obtain clusters \(V_1, \ldots, V_k\), an exceptional set \(V_0\) and a pure graph \(G'\). Set \(q := |V_1| = \ldots = |V_k|\). 
Let \(R\) be the reduced graph of \(G\) with parameters \(\varepsilon\), \(d\) and \(M'\), where \(k := |R|\). 
By Lemma~\ref{inheriteddegseqreduced} we have that \(R\) has degree sequence \(d_{R,1} \leq d_{R,2} \leq \ldots \leq d_{R,k}\) where

\begin{equation} \label{Rdegseq} d_{R,i} \geq \frac{b-\omega-\sigma}{b}k + \frac{\sigma}{\omega}i + \frac{\eta k}{2} \ \ \mbox{for all} \ \ 1 \leq i \leq \frac{\omega k}{b}. \end{equation} By doing the following steps, we find a set \(\{x_1, \ldots, x_r\} \subseteq V(R)\) such that \(\{x_1, \ldots, x_r\}\) induces a copy of \(K_r\) in \(R\):\\

\noindent Step \(1\): Choose a vertex \(x_1 \in V(R)\) such that \[d_R(x_1) \geq k - \frac{\omega}{b}k + \frac{\eta k}{3}.\]

\noindent Such a vertex exists by (\ref{Rdegseq}).\\
    
\noindent Step \(i\) for each \(i \in \{2, \ldots, r-1\}\): We have that \(\{x_1, x_2, \ldots, x_{i-1}\}\) induces a copy of \(K_{i-1}\) in \(R\) and \[d_R(x_1), d_R(x_2), \ldots, d_R(x_{i-1}) \geq k - \frac{\omega}{b}k + \frac{\eta k}{3}.\] Let \(N_R(x_1, x_2, \ldots, x_{i-1}) := N_R(x_1) \cap N_R(x_2) \cap \ldots \cap N_R(x_{i-1})\). Then 
    
\begin{align*}|N_R(x_1, x_2,  \ldots, x_{i-1})| &\geq  k - \frac{(i-1)\omega}{b}k + \frac{(i-1)\eta k}{3}\\ &\geq  \frac{b - (r-2)\omega}{b}k + \frac{(i-1)\eta k}{3} = \frac{\omega + \sigma}{b}k + \frac{(i-1)\eta k}{3}.\end{align*}
    
\noindent 
Here the last equality follows as $b=\sigma +(r-1)\omega$.
Hence by (\ref{Rdegseq}) there exists \(y \in N_R(x_1, x_2, \ldots, x_{i-1})\) such that \(d_R(y) \geq k - \frac{\omega}{b}k + \frac{\eta k}{3}\). Let \(x_i := y\). \\
    
\noindent Step \(r\): We have that \(\{x_1, x_2, \ldots, x_{r-1}\}\) induces a copy of \(K_{r-1}\) in \(R\). Moreover, \[|N_R(x_1, x_2,  \ldots, x_{r-1})| \geq \frac{\sigma}{b}k + \frac{(r-1)\eta k}{3}.\]
Choose \(x_r\) to be any vertex in \(N_R(x_1, x_2,  \ldots, x_{r-1})\).

Therefore there exists a copy of \(K_r\) in \(R\), which implies that there exists a copy of \(B^{*}\) in \(R(\omega m)\). By Lemma~\ref{keylem} we have that there exists a copy of \(B^{*}\) in \(G\).
\end{proof}

 A crucial tool in the proof of Theorem~\ref{almostmainerror} is Lemma~\ref{expandorswaplemma} below. Before stating this lemma, we need two more definitions.

\begin{define} \label{expandingsetdef}
\textnormal{Set \(\ell \in \mathbb{N}\). Let \(G\) be a graph on \(n\) vertices and \(B\) be a bottle graph with neck \(\sigma\) and width \(\omega\). Suppose that there exists a \(B\)-tiling \(T\) of \(G\) and let \(\{z_1, \ldots, z_{\ell}\} \subseteq V(G)\setminus V(T)\). We say that \(\{z_1, \ldots, z_{\ell}\}\) is an \textit{expanding set of size \(\ell\) for \(T\) in \(G\)} if the following is true: there exists an injection \(f: \{z_1, \ldots, z_{\ell}\} \to T\) such that \(z_i\) has a neighbour in every \(\omega\)-vertex class of \(f(z_i)\) for each \(1 \leq i \leq \ell\).}
\end{define}


\begin{define} \label{swappingsetdef}
\textnormal{Set \(k, \ell, m \in \mathbb{N}\). Let \(G\) be a graph on \(n\) vertices and let \((v_1, v_2, \ldots, v_n)\) be an  ordering of the vertices of \(G\). 
Let \(B\) be a bottle graph with neck \(\sigma m\) and width \(\omega m\). Suppose that there exists a \(B\)-tiling \(T\) of \(G\) and let \(\{z_1, \ldots, z_{\ell}\} \subseteq V(G)\setminus V(T)\). 
Denote by \(\Omega_T\) the set of all vertices in \(V(G)\) that belong to \(\omega m\)-classes of copies of \(B\) in \(T\). 
Let \(z \in \{z_1, \ldots, z_{\ell}\}\) and \(y \in \Omega_T\), and denote by \(B_y\) the copy of \(B\) in \(T\) that contains \(y\). 
Then there exist \(1 \leq i,j \leq n\) such that \(z := v_i\), \(y := v_j\) and \(i \neq j\).  We say that \((z,y)\) is a \textit{\(k\)-swapping pair with respect to $(v_1,\dots, v_n)$} 
if the following is true: \(z\) is adjacent to at least \(\sigma\) vertices in the \(\sigma m\)-class of \(B_y\); \(z\) is adjacent to at least \(\omega\) 
vertices in each \(\omega m\)-class of \(B_y\) that does not contain \(y\); and \(j \geq i + k\). We say that \(\{z_1, \ldots, z_{\ell}\}\) is a \textit{\(k\)-swapping set 
of size \(\ell\) for \(T\) in \(G\) with respect to $(v_1,\dots, v_n)$} if there exists a set of $\ell$ vertices \(\{y_1, \ldots, y_{\ell}\} \subseteq \Omega_T\) such that \((z_i, y_i)\) is a \(k\)-swapping pair with respect to $(v_1,\dots, v_n)$
for each \(1 \leq i \leq \ell\) and \(B_{y_p} \neq B_{y_q}\) for all \(p \neq q\).}
\end{define}

Suppose $\mathcal B$ is a $B$-tiling in a reduced graph $R$.
Very roughly speaking the purpose of expanding sets is to extend  $\mathcal B$ to a larger tiling whilst swapping sets allow us to `rotate' which vertices are uncovered by our tiling (which helps for future expansion of $\mathcal B$ to a larger tiling).


\section{Almost perfect \(H\)-tilings in graphs}\label{almost}

\begin{lem} \label{expandorswaplemma}
Let \(B\) be an \(r\)-partite bottle graph with neck \(\sigma\) and width \(\omega\), where  \(b := |B|\). Set \(\eta, \gamma > 0\) and \(n, m \in \mathbb{N}\) such that \(0 < 1/n \ll \gamma \ll 1/m \ll \eta \ll 1/b\). Set \(B^{*} := B(m)\). Let \(G\) be a graph on \(n\) vertices with degree sequence \(d_1 \leq d_2 \leq \ldots \leq d_n\) where
\[d_i \geq \frac{b-\omega-\sigma}{b}n + \frac{\sigma}{\omega}i + \eta n \ \ \mbox{\textit{for all} \(1 \leq i \leq \frac{\omega n}{b}\).}\] 
Let $V(G)=\{v_1,\dots,v_n\}$ such that $d_G (v_i)=d_i$ for all $1\leq i \leq n$.
Suppose the largest \(B^{*}\)-tiling in \(G\) covers precisely \(n' \leq (1 - \eta)n\) vertices. Then for any \(B^{*}\)-tiling \(T\) covering \(n'\) vertices in \(G\) there exists an expanding set of size \(\gamma n\) for \(T\) in \(G\) or an \( \frac{\omega \gamma n }{\sigma}\)-swapping set of size \(\gamma n\) for \(T\) in \(G\) with respect to $(v_1,\dots,v_n)$.
\end{lem}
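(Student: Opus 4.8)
The plan is to show that if $T$ is a largest $B^{*}$-tiling (covering $n' \le (1-\eta)n$ vertices), then the $\eta n$ or more uncovered vertices are ``rich'' enough that we can either attach a linear number of them onto copies of $B^{*}$ in $T$ in the weak sense that produces an expanding set, or else embed them into copies of $B^{*}$ in $T$ in a way that lets us displace a covered vertex that appears much earlier in the ordering $(v_1,\dots,v_n)$. Write $U := V(G)\setminus V(T)$, so $|U| \ge \eta n$. Since $T$ is a \emph{largest} $B^{*}$-tiling, $G[U]$ contains no copy of $B^{*}$; by Proposition~\ref{findingacopy} (applied to $G[U]$ if $|U|$ were linear with the right degree sequence — more precisely, by the contrapositive of that proposition) this forces the degree sequence of $G[U]$ to be deficient, i.e. some initial segment of the vertices of $U$ has small degree \emph{within} $U$. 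The key point is to combine this deficiency with the \emph{global} degree sequence hypothesis on $G$: a vertex $z\in U$ whose degree inside $U$ is small must send many edges into $V(T)$, and in particular into the copies of $B^{*}$ of $T$.

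The heart of the argument is then a counting/double-counting step. For each $z \in U$, consider the copies $B'$ of $B^{*}$ in $T$. Say $B'$ is \emph{good for $z$} if $z$ has a neighbour in every $\omega m$-class of $B'$; this is exactly what is needed for $z$ to be a candidate in an expanding set (after noting the Key Lemma lets us pass between $B^{*}$ and $B$ at this level of the argument, since $B^{*}$ is just a blow-up of $B$). If for a linear number of vertices $z\in U$ there is a good copy, and moreover these good copies can be chosen to form the image of an injection, we directly obtain an expanding set of size $\gamma n$ — here we use that $|T|$ is linear and that a greedy/defect-Hall style matching works because each $z$ has \emph{many} good copies (so no small bottleneck can obstruct the injection). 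Otherwise, ``most'' vertices $z\in U$ have very few good copies of $B^{*}$; but $z$ still has high degree into $V(G)$, so $z$ must be adjacent to lots of vertices lying in $\omega m$-classes of $T$, and in fact (a pigeonhole over the copies, using that $z$ misses an entire $\omega m$-class in most copies yet has many neighbours among $\Omega_T$ overall) there must be many copies in which $z$ hits $\ge \omega$ vertices of the $\sigma m$-class and $\ge \omega$ vertices of all-but-one $\omega m$-class, i.e. $z$ forms a swapping pair with some $y\in\Omega_T$. To make this a $\frac{\omega\gamma n}{\sigma}$-swapping set we need the partner $y = v_j$ to have index at least $k := \frac{\omega \gamma n}{\sigma}$ larger than the index $i$ of $z=v_i$: since $z\in U$ is typically a low-degree vertex it has small index $i$; we only need to avoid the first $k$ vertices in the ordering, and as $T$ uses $\ge (1-\eta - \gamma)n$ vertices in its $\omega m$-classes' union $\Omega_T$ while $k$ is tiny compared to $\eta n$, at most $k$ of the candidate partners $y$ are ruled out per $z$, leaving plenty. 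Finally, we thin out to ensure the partners lie in \emph{distinct} copies $B_{y_p}\ne B_{y_q}$: each copy has bounded size $bm$, so greedily selecting partners from distinct copies costs at most a factor $bm$, still leaving $\gamma n$ of them (after adjusting the hierarchy $\gamma \ll 1/m \ll \eta \ll 1/b$).

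I would organise the write-up as: (1) set up $U$, observe $|U|\ge\eta n$ and that $G[U]$ is $B^{*}$-free; (2) derive from Proposition~\ref{findingacopy} that the degree sequence of $G[U]$ fails the bound at some linear-sized prefix, hence a linear set $U_0 \subseteq U$ of vertices with small degree inside $U$, and then use the hypothesis on $d_i$ to lower-bound $d_G(z, V(T))$ and even $d_G(z,\Omega_T)$ for $z \in U_0$; (3) the dichotomy: either many $z$ have a good copy (two-phase: first exhibit $\gamma n$ such $z$, then build the injection via a defect version of Hall's theorem using that each has $\gg$ enough good copies), giving the expanding set; (4) or most $z$ have $\le$ (small) good copies, whence by pigeonhole each such $z$ participates in many swapping pairs, then select $\gamma n$ of them with distinct, sufficiently-late partners to get the swapping set. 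The main obstacle I anticipate is step (4): cleanly converting ``$z$ has few good copies but many neighbours in $\Omega_T$'' into ``$z$ has many swapping partners,'' because one has to rule out the degenerate case where $z$'s neighbours in $\Omega_T$ concentrate in copies where $z$ also misses \emph{two or more} $\omega m$-classes (so the copy is neither good nor gives a swapping pair) — this needs a careful per-copy accounting, using that a vertex of $G$ has at most $bm$ neighbours inside any single copy and $|T| = n'/(bm)$ is linear, so concentration is impossible once $z$'s degree into $\Omega_T$ is a positive fraction of $n$. The index condition $j \ge i+k$ and the distinctness of the $B_{y_p}$ are then routine given the hierarchy.
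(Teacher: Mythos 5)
Your overall strategy matches the paper's: use the contrapositive of Proposition~\ref{findingacopy} (via a removal argument) to locate linearly many uncovered vertices whose degree into $G\setminus V(T)$ is deficient, hence whose degree into $V(T)$ is large, and then run a per-copy accounting over $T$ to produce either an expanding set or a swapping set. Two remarks on the set-up: a single application of the contrapositive only yields that the degree condition fails at \emph{some} index, possibly giving just one deficient vertex, so you need the paper's removal-and-reapply formulation (if fewer than $\gamma n$ vertices are deficient, delete them and apply Proposition~\ref{findingacopy} to the rest to contradict maximality of $T$); and you should separate ``big'' uncovered vertices (degree above roughly $\frac{b-\omega}{b}n$, which feed the expanding case) from ``small'' ones (which feed the swapping case), since a small vertex's degree into $T$ is not large enough to force a copy in which it meets every $\omega m$-class. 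These are repairable.

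The genuine gap is in your treatment of the index condition $j\ge i+k$ with $k=\frac{\omega\gamma n}{\sigma}$. You assert that ``at most $k$ of the candidate partners $y$ are ruled out per $z$,'' but the excluded partners are all $y\in\Omega_T$ with $I(y)<I(z)+k$, and there are up to $I(z)-i+k$ of these, where $I(z)$ is the global index of $z$ and $i$ its rank among uncovered vertices. Since the small vertices relevant to the swapping case can have $I(z)$ as large as roughly $\frac{\omega n}{b}$, the excluded set can have linear size $\approx\frac{\omega n}{b}$, whereas the surplus of swappable partners you would obtain from the crude bound $d_G(z)\ge\frac{b-\omega-\sigma}{b}n+\eta n$ is only of order $\frac{\omega\eta n}{\sigma}$, which is \emph{smaller} because $\eta\ll 1/b$. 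The paper's proof survives precisely because it keeps the term $\frac{\sigma}{\omega}\bigl(s_{i_\ell}-i_\ell\bigr)$ in the lower bound~(\ref{swappingequation}) on $d_G(x_{i_\ell},T)$ (the global degree hypothesis is invoked at the global index $s_{i_\ell}$, while the deficiency inside the uncovered part is measured at the local index $i_\ell$), and this surplus converts, via the $T_1,T_2,T_3$ decomposition, into exactly $s_{i_\ell}-i_\ell$ additional swappable vertices --- precisely enough to pay for the partners excluded by the ordering. Without tracking this term your argument can fail: all guaranteed swappable vertices could precede $z$ in the ordering. Your other anticipated obstacle (copies in which $z$ misses two or more $\omega m$-classes) is handled correctly by your per-copy accounting, which is essentially the paper's $T_2$ bound.
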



\begin{proof}
By repeatedly applying Proposition~\ref{findingacopy}, we see that \(n' \geq \eta n/2\). Define a bijection \(I: V(G) \to [n]\) where \(I(x) = i\) implies that \(d_{G}(x) = d_i\). Let \(V(G) := \{v_1, \ldots, v_n\}\) such that \(I(v_i) = i\). Set \(n'' := n - n'\) and let \(G'' := G\setminus V(T)\). Let \(V(G'') = \{x_1, \ldots, x_{n''}\}\) where \(I(x_1) < I(x_2) < \ldots < I(x_{n''})\). For each \(1 \leq i \leq n''\), set \(s_i := I(x_i)\). Then  \(d_G(x_i) = d_{s_i}\). Choose \(j\) to be the largest integer such that 
\[d_G(x_{j}) \leq \frac{b-\omega}{b}n + (\eta - 2\gamma)n.\] 
Notice that $s_j \leq \omega n/b$.
We will refer to \(x_1, \ldots, x_j\) as \textit{small} vertices and \(x_{j+1}, \ldots, x_{n''}\) as \textit{big} vertices. 

{\it Case 1:} Suppose we have \(\gamma n\) big vertices \(z_1, \ldots, z_{\gamma n} \in V(G'')\) such that 
\begin{align}\label{superlabel}
d_{G}(z_i, G'') \leq \frac{b-\omega}{b}n'' + \frac{\eta n}{4} \ \ \text{for all} \ \ 1 \leq i \leq \gamma n.  
\end{align}
Then
\[d_G(z_i, T) \geq \frac{b-\omega}{b}n' + \frac{\eta n}{4} \ \ \mbox{for all} \ \ 1 \leq i \leq \gamma n.\] Set \(\omega^{*} := \omega m\). For each \(1 \leq i \leq \gamma n\), we see that \(z_i\) can be adjacent to at most a \(\frac{b-\omega}{b}\)-proportion of the vertices in \(T\) without having a neighbour in each \(\omega^{*}\)-class of some copy of \(B^{*}\) in \(T\). Since \(\gamma \ll 1/m \ll \eta \ll 1/b\), 
for each \(1 \leq i \leq \gamma n\) there are at least
\[\frac{\left(\frac{b-\omega}{b}n' + \frac{\eta n}{4}\right) - \left(\frac{b-\omega}{b}n'\right)}{\omega^{*}} = \frac{\eta n}{4\omega^{*}} \geq \gamma n\] copies of \(B^{*}\) in \(T\) that have at least one neighbour of \(z_i\) in each of their \(\omega^{*}\)-classes. Thus we can define an injection \(f:\{z_1, \ldots, z_{\gamma n}\} \to T\) such that \(z_i\) has a neighbour in each \(\omega^{*}\)-class of \(f(z_i)\) for each \(1 \leq i \leq \gamma n\). Hence \(\{z_1, \ldots, z_{\gamma n}\}\) is an expanding set of size \(\gamma n\) for \(T\) in \(G\).

\smallskip

{\it Case 2:} We may assume there does not exist an expanding set of size \(\gamma n\) for \(T\) in \(G\). 

In particular, there are at most \(\gamma n - 1\) vertices in \(V(G'')\) that have a neighbour in every \(\omega^{*}\)-class of \(\gamma n\) copies of \(B^{*}\) in \(T\). (Note that these could be small or big vertices.) Remove such vertices from \(V(G'')\) and call the remaining graph \(G_1''\). 
In particular, no big vertex in $G''_1$ satisfies (\ref{superlabel}). Set \(n_1'' := |G_1''|\).

{\it Subcase A:} Suppose we have \(\gamma n\) small vertices \(x_{i_1}, \ldots, x_{i_{\gamma n}} \in V(G_1'')\) such that
\begin{equation}\label{impliesswappingset}
d_{G}(x_{i_\ell}, G_1'') \leq \frac{b-\omega-\sigma}{b}n_1'' + \frac{\sigma}{\omega}i_\ell + 2\gamma n \ \ \mbox{for all} \ \ 1 \leq \ell \leq \gamma n.
\end{equation} 
Then by (\ref{impliesswappingset}) and the degree sequence condition of the lemma, we have
\begin{equation}\label{swappingequation}
d_G(x_{i_\ell}, T) \geq \frac{b-\omega-\sigma}{b}n' + \left(\frac{\sigma}{\omega}s_{i_\ell} - \frac{\sigma}{\omega}i_\ell \right) + \frac{\eta n}{2} \ \ \mbox{for all} \ \ 1 \leq \ell \leq \gamma n.
\end{equation} 
Let \(k \in \{1, \ldots \gamma n\}\). Denote by \(\Omega^{*}_T\) the set of all vertices in \(G\) that belong to \(\omega^{*}\)-classes of copies of \(B^{*}\) in \(T\). Set \(\sigma^{*} := \sigma m\). 
We aim to count the number of vertices \(y \in \Omega_T^{*}\) such that \((x_{i_k},y)\) is an \( \frac{\omega \gamma n }{\sigma}\)-swapping pair (with respect to $(v_1,\dots,v_n)$). 
Let \(T_1\) denote the subcollection of copies \(B_1\) of \(B^{*}\) in \(T\) such that \(x_{i_k}\) is adjacent to a vertex in every \(\omega^*\)-class of \(B_1\). 
Then since we removed earlier all vertices that have a neighbour in every \(\omega^{*}\)-class of \(\gamma n\) copies of \(B^{*}\) in \(T\), we have 
\[d_G(x_{i_k}, T_1) \leq (\gamma n - 1)bm.\] 

Suppose \(y \in \Omega^{*}_T\) and let \(B^{*}_{y}\) be the copy of \(B^{*}\) in \(T\) containing \(y\). We say \(y\) is \textit{swappable} with \(x_{i_k}\) 
if \(x_{i_k}\) is adjacent to at least \(\sigma\) vertices in the \(\sigma^{*}\)-class of \(B^{*}_y\) and at least \(\omega\) vertices in each \(\omega^{*}\)-class of \(B^{*}_y\) 
that does not contain \(y\). Denote the set of vertices that are swappable with \(x_{i_k}\) by \(S(x_{i_k})\). Let \(T_2\) denote the subcollection of copies \(B_2\) of \(B^{*}\) in 
\(T\setminus T_1\) such that \(B_2\) does not contain any vertex in \(S(x_{i_k})\). Then 
\[d_G(x_{i_k}, T_2) \leq (bm - \omega m  - \sigma m + \sigma - 1)|T_2|.\] Note that the \(- \omega m\) term is present since \(x_{i_k}\) cannot be adjacent to a vertex in every 
\(\omega^{*}\)-class of any copy of \(B^*\) in \(T\setminus T_1\). Let \(T_3 := T\setminus (T_1 \cup T_2)\). Then 
\[d_G(x_{i_k}, T_3) \leq (bm - \omega m)|T_3|.\] Observe that \(|T_1| + |T_2| + |T_3| = n'/bm\). Then 
\begin{equation}\label{intotequation}
d_G(x_{i_k}, T) = d_G(x_{i_k}, T_1) + d_G(x_{i_k}, T_2 \cup T_3) \leq (\gamma n - 1)bm + \left(\frac{b-\omega-\sigma}{b}n' + \frac{\left(\sigma - 1\right)}{bm}n'\right) + \sigma m|T_3|.
\end{equation} 
 Using  (\ref{swappingequation}) and (\ref{intotequation}) we see that 
\[|T_3| \geq \frac{s_{i_k} - i_k}{\omega m} + \frac{\eta n}{2\sigma m}
-\frac{(\gamma n-1)b}{\sigma} - \frac{(\sigma-1)n'}{b\sigma m^2}
 \geq \frac{s_{i_k} - i_k}{\omega m} + \frac{\eta n}{8\sigma m},\] 
where the the last inequality follows as $\gamma \ll 1/m \ll \eta \ll 1/b$.

Note that as $T_3 \cap T_2 = \emptyset$,
 every copy \(B_3\) of \(B^{*}\) in \(T_3\) must contain a vertex
from \(S(x_{i_k})\). By definition of swappable, this in fact implies
 that every copy \(B_3\) of \(B^{*}\) in \(T_3\) must contain \(\omega^{*}\) 
vertices from \(S(x_{i_k})\). Hence there are at least \(s_{i_k} - i_k + \frac{\omega\eta n}{8\sigma}\) vertices in \(S(x_{i_k})\). Not all vertices in \(S(x_{i_k})\) 
may form an \(\omega \gamma n/\sigma\)-swapping pair with \(x_{i_k}\) (with respect to $(v_1,\dots, v_n)$). 
Indeed, there are at most \(s_{i_k} - i_k + \frac{\omega\gamma n}{\sigma}\) vertices \(y \in S(x_{i_k})\) 
with \(I(y) < s_{i_k} + \frac{\omega \gamma n }{\sigma} \)
(and so do not form an \(\omega \gamma n/\sigma\)-swapping pair with \(x_{i_k}\)).
 Hence, since \(\gamma \ll 1/m,\eta,1/b\), there are at least 
\[\frac{\omega \eta n}{16\sigma} \geq bm\gamma n\] 
vertices \(y \in \Omega_T^{*}\) 
such that \((x_{i_k},y)\) is an \(\frac{\omega \gamma n }{\sigma}\)-swapping pair. 
Therefore, since \(k \in \{1, \ldots, \gamma n\}\) was arbitrary, for each \(\ell \in \{1, \ldots, \gamma n\}\) 
there exist at least \(bm\gamma n\) vertices \(y \in \Omega_T^{*}\) such that \((x_{i_{\ell}},y)\) is an \(\frac{\omega \gamma n }{\sigma}\)-swapping pair. 
Hence there exists a set of vertices \(\{y_1, \ldots, y_{\gamma n}\} \subseteq \Omega^{*}_T\) such that \((x_{i_{\ell}}, y_{\ell})\) is an \( \frac{\omega \gamma n }{\sigma}\)-swapping pair 
for each \(1 \leq \ell \leq \gamma n\) and \(B^{*}_{y_i} \neq B^{*}_{y_j}\) for all \(i \neq j\).  Thus \(\{x_{i_1}, \ldots, x_{i_{\gamma n}}\}\) is an \( \frac{\omega \gamma n }{\sigma}\)-swapping set  of size \(\gamma n\) for \(T\) in \(G\).

{\it Subcase B:}
Assume there does not exist an \(\frac{\omega \gamma n }{\sigma}\)-swapping set of size \(\gamma n\) for \(T\) in \(G\).

Then there are at most \(\gamma n - 1\) small vertices \(x \in V(G_1'')\) that satisfy (\ref{impliesswappingset}). Remove such vertices from \(V(G_1'')\), call the remaining graph \(G_2''\) and set \(n_2'' := |G_2''|\). Then for every small vertex \(x_i \in V(G_2'')\) we have 
\[d_{G_2''}(x_i) \geq \frac{b-\omega-\sigma}{b}n_2'' + \frac{\sigma}{\omega}i + \gamma n_2''.\] 
For every big vertex \(y \in V(G_2'')\), recall that $y$ does not satisfy (\ref{superlabel}). So since $|G''\setminus G''_2|\leq 2 \gamma n$, we have
\[d_{G_2''}(y) \geq \frac{b-\omega}{b}n_2'' + \gamma n_2''.\] 
Thus, 
\(G_2''\) has degree sequence \(d_{G_2'',1} \leq d_{G_2'',2} \leq \ldots \leq d_{G_2'',n_2''}\) such that
\[d_{G_2'', i} \geq \frac{b-\omega-\sigma}{b}n_2'' + \frac{\sigma}{\omega}i + \gamma n_2'' \ \ \mbox{for all} \ \ 1 \leq i \leq \frac{\omega n_2''}{b}.\] 
Hence, by Proposition~\ref{findingacopy} there exists a copy of \(B^{*}\) in \(G_2''\), contradicting that the largest \(B^{*}\)-tiling in \(G\) covers \(n'\) vertices.

Thus Lemma~\ref{expandorswaplemma} holds.
\end{proof}

With Lemma~\ref{expandorswaplemma} at hand we now can prove Theorem~\ref{almostmainerror}.
\begin{prooff} 
If \(\sigma = \omega\), then Theorem~\ref{almostmainerror} is equivalent to the (non-directed) graph version of \cite[Theorem 4.2]{Tregs}.

So we may assume that  \(\sigma < \omega\). Set \(\sigma' := (r-1)\sigma\) and \(\omega' := (r-1)\omega\). Let \(B\) be the $r$-partite bottle graph with neck \(\sigma'\) and width \(\omega'\), set \(b := |B|\) and observe that \(B\) has a perfect \(H\)-tiling. 
Let $t:=(\omega '-\sigma ')b$.
Note that it  suffices to prove the theorem under the additional assumption that $\eta \ll 1/b$.
 Define additional constants \(\varepsilon, d, \gamma \in \mathbb{R}\) and \(M',m \in \mathbb{N}\) such that 
$$0 < 1/n \ll 1/M' \ll \varepsilon \ll d \ll \gamma \ll 1/m \ll \eta \ll 1/b.$$ 

Let \(B^{*} := B(m)\) and set \[S := \frac{2\sigma}{\omega\gamma^2}, \ Q := \lceil 1/\gamma \rceil \ \ \mbox{and} \ \ z:= Q(S+1).\] 
Note that \(B^{*}\) has a perfect \(H\)-tiling. 

Suppose $G$ is an $n$-vertex graph as in the statement of the theorem.
Apply Lemma~\ref{degformreglemma} with parameters \(\varepsilon\), \(d\) and \(M'\) to \(G\). This gives us clusters \(V_1, \ldots, V_k\), an exceptional set \(V_0\) and a pure graph \(G'\), where \(|V_0| \leq \varepsilon n\) and \(|V_1| = \ldots = |V_k| =: q\). Let \(R\) be the reduced graph of \(G\) with parameters \(\varepsilon, d\) and \(M'\); so $k = |R|$. By Lemma \nolinebreak \ref{inheriteddegseqreduced}, \(R\) has degree sequence \(d_{R,1} \leq d_{R,2} \leq \ldots \leq d_{R,k}\) such that

\begin{equation*} 
d_{R,i} \geq \left(1-\frac{\omega+\sigma}{h}\right)k + \frac{\sigma}{\omega}i + \frac{\eta k}{2}=\left(1-\frac{\omega '+\sigma'}{b}\right)k + \frac{\sigma '}{\omega '}i + \frac{\eta k}{2}  \ \ \mbox{for all \(1 \leq i \leq \frac{\omega k}{h}=\frac{\omega ' k}{b}\).}
\end{equation*}

In what follows when we consider an $s$-swapping set in some blow-up $R(w)$ of $R$, we always implicitly mean an $s$-swapping set in $R(w)$ with respect to $(v_1,\dots, v_{kw})$ where 
$V(R(w))=\{ v_1, \dots, v_{kw}\}$ and $d_{R(w)} (v_1)\leq d_{R(w)} (v_2)\leq \dots \leq d_{R(w)} (v_{kw})$. That is, each blow-up $R(w)$ of $R$ comes equipped with an  ordering of its vertices based on the degrees; these orderings are defined by the functions $I_j$ below.
\begin{claim}\label{almostperfectclaim}
\(R' := R((mt)^z)\) contains a \(B^{*}\)-tiling \(\mathcal{T}\) covering at least \((1 - \eta/2)k(mt)^z = (1-\eta/2)|R'|\) vertices.
\end{claim}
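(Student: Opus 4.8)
The plan is to build the tiling $\mathcal{T}$ by an iterative "blow-up and augment" process, starting from the reduced graph $R$ and successively passing to larger and larger blow-ups $R((mt)^j)$ for $j = 1, \dots, z$, at each stage increasing the \emph{proportion} of vertices covered by a $B^{*}$-tiling. First I would observe that by Proposition~\ref{findingacopy} applied to $R$ (whose inherited degree sequence was recorded just above the claim), $R$ already contains a copy of $B^{*}$; by Lemma~\ref{mtblowuplem}, $R(mt)$ then contains a corresponding copy of $B^{*}$ as well, so $R(mt)$ has a nonempty $B^{*}$-tiling. The key point is that by Proposition~\ref{inheriteddegseqG(s)} every blow-up $R((mt)^j)$ inherits (essentially) the same degree-sequence lower bound, so Lemma~\ref{expandorswaplemma} is applicable at every stage with $G$ replaced by $R((mt)^j)$.

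Now for the inductive step. Suppose at stage $j$ we have a $B^{*}$-tiling $T_j$ in $R_j := R((mt)^j)$ covering an $\alpha_j$-proportion of the vertices, with $\alpha_j \le 1 - \eta/2$. If no $B^{*}$-tiling in $R_j$ does better than $\alpha_j$, apply Lemma~\ref{expandorswaplemma}: there is either an expanding set of size $\gamma|R_j|$ for $T_j$ or an $(\omega\gamma|R_j|/\sigma)$-swapping set of size $\gamma|R_j|$ for $T_j$. In the expanding-set case, I would glue the $\gamma|R_j|$ uncovered vertices onto their images under $f$: each such vertex has a neighbour in every $\omega^{*}$-class of its copy of $B^{*}$, so together with that copy it spans a blow-up of $K_r$ (more precisely a copy of $B'(\cdot)$ or $K_r(\cdot)$ in the sense of Lemma~\ref{mtblowuplem}); after further blowing up by the factor $mt$, Lemma~\ref{mtblowuplem} guarantees these enlarged pieces admit perfect $B^{*}$-tilings, so in $R_{j+1}$ we get a $B^{*}$-tiling covering a strictly larger proportion (up by $\approx \gamma/(\text{const})$). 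In the swapping-set case, one first "rotates": for each swapping pair $(z_i, y_i)$, replace the copy $B^{*}_{y_i}$ by a new copy of $B^{*}$ that uses $z_i$ (and $\sigma$ new vertices from the $\sigma^{*}$-class plus $\omega$ from each other $\omega^{*}$-class) in place of the $\omega^{*}$-class containing $y_i$; this frees up $\omega^{*}$ vertices including $y_i$, but crucially (by the $j \ge i + k$ condition in the definition of swapping pair) the newly uncovered vertices sit \emph{higher} in the degree ordering. Repeating this rotation at most $Q = \lceil 1/\gamma\rceil$ times pushes the uncovered set up the ordering until it consists of big (high-degree) vertices, at which point the expanding-set alternative of Lemma~\ref{expandorswaplemma} must apply, and we proceed as before. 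The constants $S = 2\sigma/(\omega\gamma^2)$ and $z = Q(S+1)$ are chosen exactly so that after $z$ stages the accumulated proportion increase exceeds any deficit below $1 - \eta/2$: each "expansion" gains a fixed fraction, and at most $S$ expansions plus $Q$ rotations are needed, iterated.

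The main obstacle — and the reason for the layered definitions of expanding and swapping sets — is controlling the interaction between rotation steps and the degree ordering. A naive rotation could re-introduce low-degree uncovered vertices and loop forever; the $k$-swapping condition with $k = \omega\gamma n/\sigma$ forces monotone upward movement in the ordering, so that after boundedly many rotations (this is where the factor $Q$ enters $z$) the uncovered vertices are guaranteed to be "big", making expansion possible. One must also check that the swapping operation does not decrease the covered proportion (it does not: it replaces one copy of $B^{*}$ with one copy of $B^{*}$), and that all of this survives passing to the blow-up $R_{j+1}$, which is exactly what Lemma~\ref{mtblowuplem} is designed to handle (the factor $t = (\omega'-\sigma')b$ ensures that the ad hoc pieces produced by gluing — blow-ups of $B$, $B^{*}$, $B'$ and $K_r$ — all have perfect $B^{*}$-tilings after one more blow-up). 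Finally, once a $B^{*}$-tiling covering $\ge (1-\eta/2)|R'|$ is obtained in $R' = R_z$, the claim is proved; I would close by noting that the total number of stages $z$ and blow-up factor $(mt)^z$ are absolute constants depending only on $H$ and $\eta$, consistent with the hierarchy $\gamma \ll 1/m \ll \eta \ll 1/b$ fixed at the start of the proof.
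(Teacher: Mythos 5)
Your proposal follows essentially the same route as the paper: iterate Lemma~\ref{expandorswaplemma} on successive blow-ups $R((mt)^j)$, using expanding sets (together with Lemma~\ref{mtblowuplem}) to raise the covered proportion by $\gamma$, and swapping sets to push the uncovered vertices up the degree ordering until an expanding set must appear. Two small bookkeeping slips: a rotation frees only the single vertex $y_i$ while newly covering $z_i$ (not ``$\omega^{*}$ vertices''), and you have the roles of $S$ and $Q$ reversed --- the potential $\sum_{x\notin V(T)} I_j(x)$ bounds the number of rotations within one expansion phase by $S=2\sigma/(\omega\gamma^2)$, while $Q=\lceil 1/\gamma\rceil$ bounds the number of expansion phases.
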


{\noindent \it Proof of Claim~\ref{almostperfectclaim}.}
If \(R\) contains a \(B^{*}\)-tiling covering at least \((1-\eta/2)k\) vertices then Lemma~\ref{mtblowuplem} implies that Claim~\ref{almostperfectclaim} holds. Suppose then that the largest \(B^{*}\)-tiling \(T\) in \(R\) covers exactly $c$ vertices where \(c < (1-\eta/2)k\). Then by Lemma~\ref{expandorswaplemma}, there exists an expanding set of size \(\gamma k\) for \(T\) in \(R\) or an \( \frac{\omega \gamma k }{\sigma}\)-swapping set of size \(\gamma k\) for \(T\) in \(R\). Define \(B'\) to be the \(r\)-partite bottle graph with neck \(\sigma'\) and width \(\omega' - 1\). Set \(\omega^{*} := \omega' m\). 

{\it Step~1: Find a \(B^{*}\)-tiling covering at least \((c+\gamma k)(mt)^{S+1}\) vertices in \(R((mt)^{S+1})\).}

{\bf Case~1:}
There exists an expanding set \(\{z_1, \ldots, z_{\gamma k}\}\) for \(T\), and hence also an associated injection \(f: \{z_1, \ldots, z_{\gamma k}\} \to T\).

In this case we do the following: For each \(1 \leq i \leq \gamma k\), separate \(R[z_i \cup f(z_i)]\) into a copy of \(K_r\) (containing \(z_i\) and one vertex from each \(\omega^{*}\)-class of \(f(z_i)\)), a copy of \(B'\) and a copy of \(B(m-1)\). Then we have a \((B^{*}, B(m-1), B', K_r)\)-tiling in \(R\) covering at least \(c + \gamma k\) vertices.  By Lemma~\ref{mtblowuplem}, \(R(mt)\) contains a \(B^{*}\)-tiling covering at least \((c +\gamma k)mt\) vertices. Further applying  Lemma~\ref{mtblowuplem} we obtain   a \(B^{*}\)-tiling covering at least \((c+\gamma k)(mt)^{S+1}\) vertices in \(R((mt)^{S+1})\), as desired.

{\bf Case~2:} There does not exist an expanding set of size $\gamma k$ for $T$ in
$R$.

For each \(1 \leq j \leq S\), Proposition~\ref{inheriteddegseqG(s)} implies that \(R((mt)^j)\) has degree sequence \(d_{R((mt)^j), 1} \leq d_{R((mt)^j), 2} \leq \ldots \leq d_{R((mt)^j), k(mt)^j}\) such that
\[d_{R((mt)^j), i} \geq \left(1-\frac{\omega+\sigma}{h}\right)k(mt)^j + \frac{\sigma}{\omega}i + \left(\frac{\eta k}{2} - \frac{\sigma}{\omega}\right)(mt)^j \ \ \mbox{for all \(1 \leq i \leq \frac{\omega k(mt)^j}{h}\).}\]

Define for \(0 \leq j \leq S\) bijections \(I_j: V(R((mt)^j)) \to [k(mt)^j]\) where \(I_j(x) := i\) implies that \(d_{R((mt)^j)}(x) = d_{R((mt)^j),i}\). 
In particular, suppose that $x \in V(R)$ and let $x_1,\dots, x_{(mt)^j}$ denote the $(mt)^j$ vertices in $R((mt)^j)$ that correspond to $x$. Suppose that  $I_0 (x)=i$.
Then we may assume that 
\begin{align}\label{label2}
I_j (x_s)= (i-1)(mt)^j+s > (I_0(x)-1)(mt)^j \text{ for each } 1\leq s \leq (mt)^j.
\end{align}
To put all this another way, one can view $I_0$ as an ordering of the vertices in $R$ in terms of the vertex degrees; $I_j$ is the ordering of $R((mt)^j)$ `inherited' from the ordering $I_0$.

Note that for all $0 \leq j \leq S$,
\begin{align}\label{label3}
\left(\sum_{x \in V(R((mt)^j))} I_j(x)\right) \leq k^2 (mt)^{2j}. 
\end{align}

 Denote by \(\Omega_T^{*}\) the set of all vertices in \(V(R)\) that belong to \(\omega^{*}\)-classes of copies of \(B^{*}\) in \(T\). 
As there does not exist an expanding set of size \(\gamma k\) for \(T\) in \(R\), then there exists an \( \frac{\omega \gamma k }{\sigma}\)-swapping set \(\{z_1, \ldots, z_{\gamma k}\}\) for \(T\) in \(R\). 
Hence there also exists a set \(\{y_1, \ldots, y_{\gamma k}\} \subseteq \Omega^{*}_T\) such that \((z_i, y_i)\) is an \( \frac{\omega \gamma k }{\sigma}\)-swapping pair for each \(1 \leq i \leq \gamma k\), such that \(B^{*}_{y_i} \neq B^{*}_{y_j}\)\footnote{As in Definition~\ref{swappingsetdef}.} for all \(i \neq j\), and such that \(I_0(y_i) \geq I_0(z_i) + \frac{\omega \gamma k }{\sigma}\) for all \(1 \leq i \leq \gamma k\). 

 For each \(1\leq i \leq \gamma k\), note that  \(R[(z_i \cup V(B^{*}_{y_i}))\setminus \{y_i\}]\) can be separated into a copy of \(B\) containing \(z_i\) and a copy of \(B(m-1)\). 
Then we have a \((B^{*}, B(m-1), B)\)-tiling \(T_1\) covering \(c\) vertices in \(R\). 
Further, since each \((z_i, y_i)\) is an \( \frac{\omega \gamma k }{\sigma}\)-swapping pair, we have that
\begin{align}\label{label1}
\left(\sum_{x \in V(R)\setminus V(T_1)} I_0(x)\right) \geq \left(\sum_{x \in V(R)\setminus V(T)} I_0(x)\right)+\frac{\omega \gamma ^2 k^2}{\sigma}.
\end{align}

By Lemma~\ref{mtblowuplem}, \(T_1(mt) \) contains a perfect  \(B^{*}\)-tiling $T'$, i.e. $T'$ is a  \(B^{*}\)-tiling covering \(c(mt)\) vertices in \(R(mt)\).
Observe that \(T'\) in \(R(mt)\) covers proportionally the same amount of vertices as \(T\) in \(R\).
Further, (\ref{label2}) and (\ref{label1}) imply that
 \begin{align}
\nonumber
\sum_{x \in V(R(mt))\setminus V(T')} I_1(x) & \geq \left ( \sum_{x \in V(R)\setminus V(T_1)} (I_0(x)-1) \right ) (m t)^2 \\ \label{label4}
&  \geq \left(\left(\sum_{x \in V(R)\setminus V(T)} I_0(x)\right) + \frac{\omega(\gamma k)^2}{2\sigma}\right)(mt)^2. 
\end{align}

Denote by \(\Omega_{T'}^{*}\) the set of all vertices in \(R(mt)\) that belong to \(\omega^{*}\)-classes of copies of \(B^{*}\) in \(T'\). Suppose that there does not exist an expanding set of size \(\gamma kmt\) for \(T'\) in \(R(mt)\). 
Then by Lemma~\ref{expandorswaplemma} there must exist an \(\frac{\omega\gamma kmt}{\sigma}\)-swapping set of size \(\gamma kmt\) for \(T'\) in \(R(mt)\). As before we can produce a \mbox{\((B^{*}, B(m-1), B)\)-tiling} \(T'_1\) covering \(c(mt)\) vertices 
in \(R(mt)\). 
Then by Lemma~\ref{mtblowuplem}, \(T'_1(mt)\)
contains  a 
perfect \(B^{*}\)-tiling $T''$, i.e. $T''$ is a $B^*$-tiling covering \(c(mt)^2\) vertices in \(R((mt)^2)\).
 Observe, similarly as before, that \(T''\) in \(R((mt)^2)\) covers proportionally the same amount of vertices as \(T\) in \(R\) and 
\begin{align*} 
\sum_{x \in V(R((mt)^2))\setminus V(T'')} I_2(x) & 
\geq \left ( \left(\sum_{x \in V(R(mt))\setminus V(T')} I_1(x)\right) + \frac{\omega(\gamma kmt)^2}{2\sigma}\right)(mt)^2 \\
\\ & \stackrel{(\ref{label4})}{ \geq} \left(\left(\sum_{x \in V(R)\setminus V(T)} I_0(x)\right) + \frac{\omega(\gamma k)^2}{\sigma}\right)(mt)^4.
\end{align*}

Note that (\ref{label3}) implies that one can
repeat this argument at most \(S\) times; that is, for some $j \leq S$ we must obtain an expanding set of size  \(\gamma k(mt)^{j}\)  in \(R((mt)^{j})\). More precisely,
we  obtain a $B^*$-tiling \(T^{(j)}\) in \(R((mt)^{j})\) covering $c(mt)^j$ vertices, such that
 there  exists an expanding set of size \(\gamma k(mt)^{j}\) for \(T^{(j)}\) in \(R((mt)^{j})\). Then as before, one can use this expanding set and 
 Lemma~\ref{mtblowuplem} to obtain   a \(B^{*}\)-tiling covering at least \((c+\gamma k)(mt)^{S+1}\) vertices in \(R((mt)^{S+1})\), as desired.

\smallskip

{\noindent \it General Step:}

Repeating the whole argument from Step~1 at most \(Q\) times we see that \(R((mt)^{Q(S+1)}) = R((mt)^z) = R'\) has a \(B^{*}\)-tiling \(\mathcal{T}\) covering at least \((1 - \eta/2)|R'|\) vertices. Thus Claim \ref{almostperfectclaim} holds.
\qed

\medskip

Now for each \(1 \leq i \leq k\), partition \(V_i\) into classes \(V_i^{*}, V_{i,1}, \ldots, V_{i, (mt)^z}\) where \(q' := |V_{i,j}| = \lfloor q/(mt)^z\rfloor \geq q/(2(mt)^{z})\) for all \(1 \leq j \leq (mt)^z\). Lemma~\ref{degformreglemma} implies that \(qk \geq (1 - \varepsilon)n\), therefore 
\begin{align}\label{label10}
q'|R'| = \lfloor q/(mt)^z\rfloor k(mt)^z \geq qk - k(mt)^z \geq (1 - 2\varepsilon)n.
\end{align}
Fact~\ref{slicinglemma} tells us that for each \(\varepsilon\)-regular pair \((V_{i_1}, V_{i_2})_{G'}\) with density at least \(d\) we have that \((V_{i_1, j_1}, V_{i_2, j_2})_{G'}\) is \(2\varepsilon (mt)^z\)-regular with density at least \(d-\varepsilon \geq d/2\) (for all \(1 \leq j_1, j_2 \leq (mt)^z\)). 
Note that $2\varepsilon (mt)^z\leq \varepsilon ^{1/2}$.
So we can label the vertex set of \(R'\) so that \(V(R') = \{V_{i,j} : 1 \leq i \leq k, 1 \leq j \leq (mt)^z\}\) and see that if 
\(V_{i_1, j_1}V_{i_2, j_2} \in E(R')\) then \((V_{i_1, j_1}, V_{i_2, j_2})_{G'}\) is \(\varepsilon ^{1/2}\)-regular with density at least \(d/2\).

We know by Claim~\ref{almostperfectclaim} that \(R'\) has a \(B^{*}\)-tiling \(\mathcal{T}\) that covers at least \((1-\eta/2)|R'|\) vertices. Let \(\hat{B}^{*}\) be a copy of \(B^{*}\) in \(\mathcal{T}\) and label the vertices of \(\hat{B}^{*}\) so that \(V(\hat{B}^{*}) = \{V_{i_1, j_1}, V_{i_2, j_2}, \ldots, V_{i_{bm}, j_{bm}}\}\). Set \(V' := V_{i_1, j_1} \cup V_{i_2, j_2} \cup \ldots \cup V_{i_{bm}, j_{bm}}\). 
Applying Lemma~\ref{almosttilingRtoG} with  \(\eta ^2, q', d/2, \varepsilon ^{1/2}\) playing the roles of $\beta, q,d, \varepsilon$, we have that \(G'[V']\) has a \(B^{*}\)-tiling covering at least \((1-\eta ^2)q'bm\) vertices. Applying Lemma~\ref{almosttilingRtoG} in this way to each copy of \(B^{*}\) in \(\mathcal{T}\) we see that \(G' \subseteq G\) has a \(B^{*}\)-tiling covering at least
\[
\left(\left(1-\eta ^2\right)q'bm\right) \times \left(\left(1-\eta/2\right)|R'|\right)/bm \stackrel{(\ref{label10})}{\geq} \left(1-\eta ^2\right)\left(1-\eta/2\right)\left(1-2\varepsilon\right)n \geq (1-\eta)n
\] 
vertices. Since each copy of \(B^{*}\) has a perfect \(H\)-tiling, \(G\) contains an \(H\)-tiling covering all but at most \(\eta n\) vertices.  \endproof
\end{prooff}

Theorem~\ref{almostmain} easily implies Theorem~\ref{xdegseqKomlos}.

\begin{prooffff}
Let $H$, $x \in (0,1)$ and $\eta >0$ be as in the statement of the theorem. 
Suppose $n$ is sufficiently large and let $G$ be an $n$-vertex graph as in the statement of the theorem.

Note that it suffices to prove the result in the case when  
 \(x \in (0,1) \cap \mathbb{Q}\). Thus, there exist \(a,b \in \mathbb{N}\) such that \(x = a/b\). Define \(\sigma_1 := a(r-1)\sigma\) and \(\omega_1 := a(r-1)\omega + (b-a)h=bh-a \sigma\). Let \(H_1\) be the \(r\)-partite bottle graph with neck \(\sigma_1\) and width \(\omega_1\), and observe that $\sigma _1 <\omega _1$ and \(|H_1| = b(r-1)h\). 

\begin{claim*}
\(H_1\) contains an \(H\)-tiling covering \(x|H_1|\) vertices.
\end{claim*}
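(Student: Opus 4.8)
The plan is to exhibit $a(r-1)$ pairwise vertex-disjoint copies of $H$ inside $H_1$; since each copy has $h$ vertices, this covers exactly $a(r-1)h = (a/b)\cdot b(r-1)h = x|H_1|$ vertices, which is what the claim asserts. The key building block is the complete $r$-partite graph $F := K_{(r-1)\sigma,\,h-\sigma,\,\dots,\,h-\sigma}$ with a single part $P_0$ of size $(r-1)\sigma$ and $r-1$ further parts $P_1,\dots,P_{r-1}$ each of size $h-\sigma=(r-1)\omega$. Note $|F|=(r-1)\sigma+(r-1)(h-\sigma)=(r-1)h$, so a perfect $H$-tiling of $F$ is made up of exactly $r-1$ copies of $H$. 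First I would record that $F$ does have a perfect $H$-tiling: this is precisely the bottle graph denoted $B$ in the proof of Theorem~\ref{almostmainerror} (where it is observed to admit a perfect $H$-tiling), and in any case it follows from a direct \emph{cyclic rotation} argument, which I would spell out for completeness.

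That argument runs as follows. Fix an $r$-colouring of $H$ with colour classes $C_1,\dots,C_r$, labelled so that $|C_1|=\sigma$; thus $\sum_{i=1}^{r}|C_i|=h$. For $t=1,\dots,r-1$ embed the $t$-th copy of $H$ into $F$ by sending $C_1$ into $P_0$ and sending $C_{1+i}$ into $P_{((i+t-1)\bmod(r-1))+1}$ for each $i=1,\dots,r-1$. Since $F$ is complete multipartite and distinct colour classes are sent to distinct parts, each such map is a valid embedding. Over the $r-1$ copies, $P_0$ receives $r-1$ translates of $C_1$, i.e.\ $(r-1)\sigma$ vertices, filling it exactly; and for each $\ell\in\{1,\dots,r-1\}$ the part $P_\ell$ receives, over the $r-1$ copies, exactly one of each of $C_2,\dots,C_r$, i.e.\ $\sum_{i\ge 2}|C_i|=h-\sigma$ vertices, filling it exactly. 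Hence the $r-1$ copies are vertex-disjoint and together cover $V(F)$, giving a perfect $H$-tiling of $F$.

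Finally I would embed $a$ pairwise vertex-disjoint copies of $F$ into $H_1$. Denote the parts of $H_1$ by $Q_0$ (the neck, of size $\sigma_1 = a(r-1)\sigma$) and $Q_1,\dots,Q_{r-1}$ (the width-parts, each of size $\omega_1 = a(r-1)\omega+(b-a)h$). Since $\sigma_1 = a\cdot(r-1)\sigma$, partition $Q_0$ into $a$ sets $A_1,\dots,A_a$ each of size $(r-1)\sigma$; since $\omega_1 \ge a(r-1)\omega = a(h-\sigma)$ (because $a\le b$), one can pick inside each $Q_\ell$ pairwise disjoint sets $B_{\ell,1},\dots,B_{\ell,a}$ each of size $h-\sigma$. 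For each $s\in\{1,\dots,a\}$, as $H_1$ is complete $r$-partite, $H_1[A_s\cup B_{1,s}\cup\dots\cup B_{r-1,s}]$ is a copy of $F$, and these $a$ copies are vertex-disjoint. Taking a perfect $H$-tiling inside each of them produces an $H$-tiling of $H_1$ consisting of $a(r-1)$ copies of $H$, hence covering $a(r-1)h = x|H_1|$ vertices, which proves the claim. The whole argument is essentially a packing computation; the only inequality requiring a check is $\omega_1 \ge a(h-\sigma)$, i.e.\ $(b-a)h \ge 0$, so I do not anticipate any genuine obstacle.
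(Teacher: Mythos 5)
Your proof is correct and takes essentially the same approach as the paper: the paper's one-line argument is to place \(a(r-1)\) copies of \(H\) with every \(\sigma\)-class inside the \(\sigma_1\)-class of \(H_1\), and your construction is exactly this, with the balanced distribution of the remaining colour classes over the width-parts (via the intermediate bottle graph \(F\) and the cyclic rotation) spelled out explicitly.
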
 
The claim follows since one can tile $H_1$ with \(a(r-1)\) copies of \(H\) where each $\sigma$-class lies in the $\sigma _1$-class of $H_1$. Thus, we have an \(H\)-tiling covering \(a(r-1)h = x|H_1|\) vertices in \(H_1\), as desired.

Note that
$$d_i \geq \left(g_H(x) - \frac{x\sigma}{h}\right)n + \frac{(r-1)x\sigma}{h - x\sigma}i= 
\left ( 1-\frac{\omega _1+\sigma _1}{ b(r-1)h} \right ) n+\frac{\sigma _1}{\omega _1}i $$
for all $i \leq \left(\frac{h - x\sigma}{(r-1)h}\right)n =\frac{\omega _1 n}{b(r-1)h}$.
Thus, applying Theorem~\ref{almostmain} with \(H_1\)  playing the role of $H$, we produce an \(H_1\)-tiling in $G$ covering all but at most \(\eta n\) vertices. Then the claim implies that we have an \(H\)-tiling in \(G\) covering at least \(x(1 - \eta ) n > (x-\eta)n\) vertices.
\endproof
\end{prooffff}

\section{Concluding remarks}

In this paper we have given a particular degree sequence condition that forces a graph to contain an almost perfect \(H\)-tiling (Theorem \ref{almostmain}). In fact, 
in general for a fixed graph $H$,
Theorem~\ref{almostmain} yields a whole class of degree sequences that force an almost perfect \(H\)-tiling. Indeed, we have the following consequence of Theorem~\ref{almostmain}.   

\begin{thm}\label{generalalmostmain}
Let \(\eta > 0\) and \(H\) be a graph with \(\chi(H) = r\) and \(h := |H|\). Set \(\sigma \in \mathbb{R}\) such that \(\sigma(H) \leq \sigma \leq h/r\) and \(\omega := \left(h - \sigma\right)/(r-1)\). Then there exists an \(n_0 = n_0(\eta, \sigma, H) \in \mathbb{N}\) such that the following holds: Suppose \(G\) is a graph on \(n\geq n_0\) vertices with degree sequence \(d_1 \leq d_2 \leq \ldots \leq d_n\) such that

\[d_i \geq \left(1 - \frac{\omega+\sigma}{h}\right)n + \frac{\sigma}{\omega}i \ \ \mbox{for all \ \(1 \leq i \leq \frac{\omega n}{h}\).}\]
Then \(G\) contains an \(H\)-tiling covering all but at most \(\eta n\) vertices.
\end{thm}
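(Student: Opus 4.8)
### Proof proposal for Theorem~\ref{generalalmostmain}

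The plan is to derive Theorem~\ref{generalalmostmain} from Theorem~\ref{almostmain} by an averaging/padding trick, exploiting the fact that $\sigma$ here is allowed to be larger than $\sigma(H)$. The key observation is that the degree sequence condition in Theorem~\ref{generalalmostmain} with a \emph{larger} value of $\sigma$ (hence smaller $\omega$) is in a suitable sense \emph{stronger} on the relevant range, so one should be able to recover the genuine $\sigma(H)$-version needed to apply Theorem~\ref{almostmain}. Concretely, write $\sigma_0 := \sigma(H)$ and $\omega_0 := (h-\sigma_0)/(r-1) \geq \omega$. First I would check the endpoint values agree: at $i = \omega n/h$ the given bound equals $\left(1 - \frac{\omega+\sigma}{h}\right)n + \frac{\sigma}{\omega}\cdot\frac{\omega n}{h} = \left(1 - \frac{\omega}{h}\right)n = (1 - 1/\chi_{cr}(H))n$ (using $b = \sigma + (r-1)\omega = h$), which is exactly the Koml\'os threshold and the endpoint of the degree sequence in Theorem~\ref{almostmain}. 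So both degree sequences rise from some starting value to the same point $(1-\omega_0/h)n = (1-\omega/h)n$; the difference is only the slope and the starting point.

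The main step is to show that the hypothesis of Theorem~\ref{generalalmostmain} implies the hypothesis of Theorem~\ref{almostmain} (after absorbing a tiny error term, which is harmless since one can always prove the $\eta n$-version for a smaller $\eta$ and then relax). For each $i$ with $1 \leq i \leq \omega_0 n/h$ we need $d_i \geq \left(1 - \frac{\omega_0+\sigma_0}{h}\right)n + \frac{\sigma_0}{\omega_0}i$. When $i \leq \omega n/h$, the two linear functions of $i$ both pass through the common endpoint $(1-\omega/h)n$ at $i=\omega n/h$ and $(1-\omega_0/h)n=(1-\omega/h)n$ at $i = \omega_0 n/h$ respectively; since the Theorem~\ref{generalalmostmain} line has the steeper slope $\sigma/\omega$ versus... here one must be careful about which slope is larger. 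One checks: $\sigma/\omega$ versus $\sigma_0/\omega_0$ as $\sigma$ ranges over $[\sigma_0, h/r]$; since $\omega = (h-\sigma)/(r-1)$ is decreasing in $\sigma$, the ratio $\sigma/\omega = (r-1)\sigma/(h-\sigma)$ is increasing in $\sigma$, so $\sigma/\omega \geq \sigma_0/\omega_0$. Hence on the interval $[1, \omega n/h]$ the given bound dominates the Theorem~\ref{almostmain} bound (both anchored at the right endpoint, steeper slope means larger on the left). For $i$ in the leftover interval $(\omega n/h,\ \omega_0 n/h]$ one uses monotonicity of the degree sequence: $d_i \geq d_{\lceil \omega n/h\rceil} \geq (1-\omega/h)n - o(n) = (1-\omega_0/h)n - o(n)$, which already exceeds the required value $\left(1 - \frac{\omega_0+\sigma_0}{h}\right)n + \frac{\sigma_0}{\omega_0}i$ for all such $i \leq \omega_0 n/h$ since that value is at most $(1-\omega_0/h)n$. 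Feeding this into Theorem~\ref{almostmain} (applied with $H$ and, strictly, with $\eta/2$ in place of $\eta$, to swallow the $o(n)$ slack) yields the desired $H$-tiling.

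The step I expect to be the genuine (if modest) obstacle is verifying the slope inequality $\sigma/\omega \geq \sigma(H)/\omega(H)$ cleanly and confirming that it is exactly what makes the Theorem~\ref{generalalmostmain} hypothesis stronger on $[1,\omega n/h]$ rather than weaker — i.e. pinning down that both linear functions share the right endpoint and that a steeper slope there forces pointwise dominance to the left. Once that geometric picture is confirmed, the rest is routine: the extension to the slightly longer index range $[1,\omega_0 n/h]$ is immediate from monotonicity of $(d_i)$, and the passage through a slightly smaller $\eta$ to absorb lower-order terms is standard (cf. the derivation of Theorem~\ref{almostmain} from Theorem~\ref{almostmainerror}).
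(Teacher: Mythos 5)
Your reduction does not work for $r\geq 3$: the hypothesis of Theorem~\ref{generalalmostmain} does \emph{not} imply the hypothesis of Theorem~\ref{almostmain}, and the geometric picture you describe is wrong at both ends. Writing $\sigma_0:=\sigma(H)$, $\omega_0:=(h-\sigma_0)/(r-1)$, note that $\omega+\sigma=\frac{h+(r-2)\sigma}{r-1}$ is \emph{increasing} in $\sigma$ for $r\geq 3$, so for $\sigma>\sigma_0$ the intercept $\bigl(1-\frac{\omega+\sigma}{h}\bigr)n$ is strictly \emph{smaller} than $\bigl(1-\frac{\omega_0+\sigma_0}{h}\bigr)n$: the general line starts lower. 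Moreover the two lines do not share a right endpoint: the general condition ends at height $(1-\omega/h)n$, which is strictly larger than $(1-\omega_0/h)n=(1-1/\chi_{cr}(H))n$ when $\sigma>\sigma_0$ (your displayed identity $(1-\omega_0/h)n=(1-\omega/h)n$ is false). The two lines therefore cross at $i^*=\frac{(\omega+\sigma)-(\omega_0+\sigma_0)}{h\left(\sigma/\omega-\sigma_0/\omega_0\right)}n>0$, and for all $i<i^*$ the general bound is strictly weaker than the bound of Theorem~\ref{almostmain}. Concretely, take $r=3$, $h=6$, $\sigma_0=1$ (so $\omega_0=5/2$) and $\sigma=2$ (so $\omega=2$): the general condition requires only $d_i\geq n/3+i$, while Theorem~\ref{almostmain} requires $d_i\geq 5n/12+\frac{2}{5}i$, which is larger for all $i<5n/36$. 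A graph meeting the first bound with near-equality for small $i$ simply does not satisfy the input condition of Theorem~\ref{almostmain}, so you cannot feed it in. (Your argument does happen to go through when $r=2$, since then $\omega+\sigma=h$ identically and the intercepts coincide; and indeed, if the implication held in general the theorem would be an empty restatement rather than, as intended, a genuinely new family of pairwise incomparable degree sequence conditions.)

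The paper instead keeps the degree sequence as given and changes the \emph{target graph}: it builds an $r$-partite bottle graph $H^*$ with neck $\sigma t$ and width $\omega t$ (for a suitable integer $t$ making these integral), proves that $H^*$ admits a perfect $H$-tiling --- this is the only place the hypothesis $\sigma\leq h/r$ is used, guaranteeing $\sigma t\leq \omega t$ and making the explicit tiling of $H^*$ by copies of $H$ and of an auxiliary complete multipartite graph $\overline{H}$ possible --- and then applies Theorem~\ref{almostmain} with $H^*$ in place of $H$. Since $\sigma(H^*)/|H^*|=\sigma/h$ and $\omega t/|H^*|=\omega/h$, the degree sequence condition of Theorem~\ref{almostmain} for $H^*$ is verbatim the condition in Theorem~\ref{generalalmostmain}, and an almost perfect $H^*$-tiling decomposes into an almost perfect $H$-tiling. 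If you want to salvage a proof along these lines, that auxiliary-graph step is the missing idea; the averaging/monotonicity manipulations in your write-up cannot replace it.
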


\begin{proof} 
Note that it suffices to prove the theorem under the assumption that \(\sigma \in \mathbb{Q}\).
To prove Theorem \ref{generalalmostmain}, we define a certain bottle graph \(H^*\) and then apply Theorem \ref{almostmain} with input \(H^*\) to conclude our result. 

Since \(\sigma \in \mathbb{Q}\), there exist \(a,b \in \mathbb{N}\) such that \(\sigma = a/b\). Let \(\omega(H) := (h - \sigma(H))/(r-1)\) and \(t := b(r-1)(\omega(H) - \sigma(H))\). We define \(H^*\) to be the \(r\)-partite bottle graph with neck \(\sigma t\) and width \(\omega t\) (note \(\sigma t, \omega t \in \mathbb N\)). Also, notice that \(|H^*| = ht\). 

\begin{claim*}
\(H^*\) contains a perfect \(H\)-tiling.
\end{claim*}

We tile \(t\) copies of \(H\) into \(H^*\). Firstly, tile \(b(r-1)(\omega(H) - \sigma)\) copies of \(H\) into \(H^*\) such that the \(\sigma(H)\)-classes are all placed in the \(\sigma t\)-class of \(H^*\). This leaves 
\[\sigma b(r-1)(\omega(H) - \sigma(H)) - \sigma(H) b (r-1)(\omega(H) - \sigma) = \omega(H)b(r-1)(\sigma - \sigma(H))\] 
vertices in the \(\sigma t\)-class of \(H^*\) to be covered and 
\begin{align*}
& \ \omega b (r-1)(\omega(H) - \sigma(H)) - \omega(H) b(r-1)(\omega(H) - \sigma) \\ 
= & \ b((r-1)\omega(\omega(H) - \sigma(H)) - (r-1)\omega(H)(\omega(H) - \sigma)) \\
= & \ b((h - \sigma)(\omega(H) - \sigma(H)) - (h - \sigma(H))(\omega(H) - \sigma)) \\
= & \ b(h - \omega(H))(\sigma - \sigma(H))
\end{align*}
vertices in each \(\omega t\)-class of \(H^*\) to be covered. Let \(\overline{H}\) be the \(r\)-partite complete graph with one vertex class of size \((r-1)\omega(H)\) and \((r-1)\) vertex classes of size \((r-2)\omega(H) + \sigma(H)\). Observe that \(\overline{H}\) has a perfect \(H\)-tiling (using \(r-1\) copies of \(H\)). To cover the remaining vertices of \(H^*\), tile \(b(\sigma - \sigma(H))\) copies of \(\overline{H}\) into \(H^*\) such that every vertex class of size \((r-1)\omega(H)\) is placed in the \(\sigma t\) class of \(H^*\). Observe that \[((r-2)\omega(H) + \sigma(H))b(\sigma - \sigma(H)) = b(h - \omega(H))(\sigma - \sigma(H)).\] Hence \(H^*\) contains a perfect \(H\)-tiling and the claim holds.

\smallskip

Suppose $G$ is as in the statement of Theorem~\ref{generalalmostmain}.
Applying Theorem \ref{almostmain} with input $G$ and \(H^*\), we obtain that \(G\) contains an \(H^*\)-tiling covering all but at most \(\eta n\) vertices. 
(Note the degree sequence in Theorem~\ref{generalalmostmain} is precisely the degree sequence of Theorem~\ref{almostmain} with input \(H^*\).)
Since each copy of \(H^*\) has a perfect \(H\)-tiling, we conclude that \(G\) contains an \(H\)-tiling covering all but at most \(\eta n\) vertices.   

\end{proof}

In a similar way, Theorem \ref{xdegseqKomlos} yields a class of degree sequences forcing an almost \(x\)-proportional \(H\)-tiling in \(G\).

\begin{thm}\label{generalxdegseqKomlos}
Let \(x \in (0,1)  \) and \(H\) be a graph with \(\chi(H) = r\) and \(h := |H|\). Set \(\eta > 0\). Let \(\sigma \in \mathbb{R}\) such that \(\sigma(H) \leq \sigma \leq h/r\) and \(\omega := \left(h - \sigma\right)/(r-1)\). Then there exists an \(n_0 = n_0(\eta,x,\sigma, H) \in \mathbb{N}\) such that the following holds: Suppose \(G\) is a graph on \(n\geq n_0\) vertices with degree sequence \(d_1 \leq d_2 \leq \ldots \leq d_n\) such that

\[d_i \geq \left(g_H(x) - \frac{x\sigma}{h}\right)n + \frac{(r-1)x\sigma}{h - x\sigma}i \ \ \mbox{for all \ \(1 \leq i \leq \left(\frac{h - x\sigma}{(r-1)h}\right)n\).}\]
 Then \(G\) contains an \(H\)-tiling covering at least \((x-\eta)n\) vertices.
\end{thm}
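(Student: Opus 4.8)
The plan is to deduce Theorem~\ref{xdegseqKomlos} from Theorem~\ref{almostmain} by a standard rationalisation-and-bottle-graph argument, exactly in the spirit of the derivation of Theorem~\ref{generalxdegseqKomlos} sketched later in the paper. First I would observe that it suffices to prove the statement when $x \in (0,1) \cap \mathbb{Q}$: if $x$ is irrational, pick a rational $x' < x$ sufficiently close to $x$ so that $g_H(x')$, $x'\sigma/h$ and the slope $(r-1)x'\sigma/(h-x'\sigma)$ are all close to their $x$-counterparts, note that any $G$ satisfying the degree sequence hypothesis for $x$ satisfies (up to a lower-order perturbation absorbed by decreasing $\eta$) the hypothesis for $x'$, and an $H$-tiling covering $(x'-\eta/2)n$ vertices covers at least $(x-\eta)n$ vertices provided $x'$ was chosen close enough.

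So write $x = a/b$ with $a,b \in \mathbb{N}$. The key construction is an auxiliary bottle graph $H_1$: set $\sigma_1 := a(r-1)\sigma$ and $\omega_1 := a(r-1)\omega + (b-a)h = bh - a\sigma$, let $H_1$ be the $r$-partite bottle graph with neck $\sigma_1$ and width $\omega_1$, so that $|H_1| = \sigma_1 + (r-1)\omega_1 = b(r-1)h$, and check $\sigma_1 < \omega_1$ (this follows from $\sigma < \omega$, i.e.\ from $H$ being non-trivial, together with $a<b$). The first substantive step is the embedded claim that $H_1$ contains an $H$-tiling covering exactly $x|H_1|$ vertices. This is verified combinatorially: tile $a(r-1)$ disjoint copies of $H$ into $H_1$ so that each copy contributes its $\sigma$-class to the neck $\sigma_1$-class of $H_1$ and its remaining $r-1$ classes (of total size $h-\sigma = (r-1)\omega$) spread one per width-$\omega_1$ class of $H_1$; one checks that $a(r-1)\sigma = \sigma_1$ fits the neck exactly and $a(r-1)\omega \le \omega_1$ fits each width class, so these copies are vertex-disjoint and cover $a(r-1)h = (a/b)\cdot b(r-1)h = x|H_1|$ vertices.

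The second step is the degree-sequence bookkeeping: one verifies the algebraic identity
\[
\left(g_H(x) - \frac{x\sigma}{h}\right)n + \frac{(r-1)x\sigma}{h-x\sigma}\,i = \left(1 - \frac{\omega_1 + \sigma_1}{b(r-1)h}\right)n + \frac{\sigma_1}{\omega_1}\,i
\]
for all $i$, and that the range $1 \le i \le \big((h-x\sigma)/(r-1)h\big)n$ coincides with $1 \le i \le \omega_1 n / |H_1| = \omega_1 n/(b(r-1)h)$; this is a direct substitution using $x = a/b$, $\omega_1 = bh - a\sigma$ and the definition of $g_H$, and is the routine part I would not grind through. Consequently $G$ satisfies precisely the hypothesis of Theorem~\ref{almostmain} with $H_1$ in the role of $H$, so Theorem~\ref{almostmain} yields an $H_1$-tiling in $G$ covering all but at most $\eta' n$ vertices for any prescribed $\eta' > 0$. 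Finally, replacing each copy of $H_1$ in this tiling by the $H$-tiling of the claim covers at least $x(1-\eta')n$ vertices of $G$ by copies of $H$; choosing $\eta'$ small enough (relative to $\eta$ and $x$) gives $x(1-\eta')n > (x-\eta)n$, completing the proof.

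I do not expect a genuine obstacle here: the entire argument is a reduction, and the only place requiring care is making sure the auxiliary bottle graph $H_1$ has integer parameters satisfying $\sigma_1 < \omega_1$ and that its $\sigma$ and $\chi$ behave as the formula demands — in particular that the $r$-partite bottle graph $H_1$ indeed has $\sigma(H_1) = \sigma_1$ and $\chi(H_1) = r$, so that "Theorem~\ref{almostmain} with $H_1$" uses the right $\omega$ and $\sigma$. This is immediate from the definition of a bottle graph. The mild subtlety of passing from irrational $x$ to rational $x$ is handled by the continuity/monotonicity remark at the start; everything else is substitution.
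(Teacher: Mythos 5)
Your proposal is, essentially verbatim, the paper's derivation of Theorem~\ref{xdegseqKomlos} from Theorem~\ref{almostmain} (rationalise $x=a/b$, build the bottle graph $H_1$ with neck $a(r-1)\sigma$ and width $bh-a\sigma$, tile it with $a(r-1)$ copies of $H$). That argument is correct \emph{only} in the special case $\sigma=\sigma(H)$, which is Theorem~\ref{xdegseqKomlos}; the statement you were asked to prove is the general version with an arbitrary real $\sigma\in[\sigma(H),h/r]$, and both substantive steps of your reduction break when $\sigma>\sigma(H)$. First, the embedded tiling claim: you place ``each copy's $\sigma$-class'' into the neck and assert $a(r-1)\sigma=\sigma_1$ fills it exactly, but $H$ has no colour class of size $\sigma$ unless $\sigma=\sigma(H)$ (indeed $\sigma$ need not even be an integer, so $\sigma_1$ need not be either — rationalising $x$ alone does not fix this). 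Placing the $\sigma(H)$-classes in the neck instead under-fills it and forces each width class to absorb $a(h-\sigma(H))$ vertices, exceeding its capacity $bh-a\sigma$ whenever $x>h/(h+\sigma-\sigma(H))$; so your one-line verification fails and a genuinely different construction is needed. The paper's route is to build the bottle graph $H^*$ with neck $\sigma t$ and width $\omega t$ and prove it has a \emph{perfect} $H$-tiling via the auxiliary complete $r$-partite graph $\overline{H}$ (proof of Theorem~\ref{generalalmostmain}), and then to apply Theorem~\ref{xdegseqKomlos} — not Theorem~\ref{almostmain} — with input $H^*$.

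Second, the ``routine substitution'' you declined to grind through is exactly where the argument collapses: a direct computation gives
\[
g_H(x)-\frac{x\sigma}{h}\;=\;\left(1-\frac{\omega_1+\sigma_1}{b(r-1)h}\right)-\frac{x\bigl(\sigma-\sigma(H)\bigr)}{(r-1)h},
\]
since $g_H(x)$ is defined through $\chi_{cr}(H)$ and hence through $\sigma(H)$, not through the free parameter $\sigma$. Thus for $\sigma>\sigma(H)$ the hypothesis of Theorem~\ref{generalxdegseqKomlos} is strictly weaker, at every index $i$, than the degree sequence that Theorem~\ref{almostmain} with input $H_1$ demands, and the reduction does not go through. (The same tension between $g_H$ and $g_{H^*}$ is present in the paper's own two-line proof and deserves a remark there too, but it does not rescue your version: you must at minimum confront it rather than label it routine.) If your intention was only to prove Theorem~\ref{xdegseqKomlos}, your argument matches the paper's; as a proof of Theorem~\ref{generalxdegseqKomlos} it proves a strictly weaker statement.
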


\begin{proof}
Define \(H^*\) as in the proof of Theorem \ref{generalalmostmain}. Applying Theorem \ref{xdegseqKomlos} with input \(H^*\),
we obtain that \(G\) contains an \(H^*\)-tiling covering all but at most \((x-\eta) n\) vertices. Since each copy of \(H^*\) has a perfect \(H\)-tiling, we conclude that \(G\) contains an \(H\)-tiling covering all but at most \((x-\eta)n\) vertices.
\end{proof}

\section*{Acknowledgements}
The third author would like to thank Jan Hladk\'y and Diana Piguet for suggesting the question studied in this paper.
We are also grateful to the referees for their helpful and careful reviews.

{\footnotesize \obeylines \parindent=0pt
\begin{tabular}{lll}
	Joseph Hyde \& Andrew Treglown  &\ &  Hong Liu\\
	School of Mathematics 					&\ & Mathematics Institute		\\
	University of Birmingham				&\ & University of Warwick 	\\
	Birmingham											&\ & Coventry \\
	B15 2TT													&\ & CV4 2AL \\
	UK															&\ & UK
\end{tabular}
}
\begin{flushleft}
{\emph{E-mail addresses}:
\tt{jfh337@bham.ac.uk, a.c.treglown@bham.ac.uk, h.liu.9@warwick.ac.uk}}
\end{flushleft}

\end{document}